\documentclass{article}
\pdfoutput=1

\usepackage{arxiv} 

\usepackage[utf8]{inputenc} 
\usepackage{amsfonts}       
\usepackage{amsmath}        
\usepackage{amsthm}         
\usepackage{bm}             
\usepackage{bbm}        	
\usepackage{graphicx}       
\usepackage{enumitem}		


\newcommand{\prob}{\mathbb{P}}
\newcommand{\expec}{\mathbb{E}}
\newcommand{\1}{\mathbbm{1}}
\newcommand{\IRD}{\texttt{IRD}}
\newcommand{\ARD}{\texttt{ARD}}
\newcommand{\CCI}{\texttt{CCI}}
\newcommand{\bbar}[1]{\bar{\bar{#1}}}


\newtheorem{theorem}{Theorem}[section]
\newtheorem{corollary}[theorem]{Corollary}
\newtheorem{lemma}[theorem]{Lemma}
\newtheorem{prop}[theorem]{Proposition}
\newtheorem{definition}[theorem]{Definition}

\newtheorem{assumption}[theorem]{Assumption}
\theoremstyle{definition}
\newtheorem*{remark}{Remark}
\newtheorem*{notation}{Notation}

\title{From inhomogeneous random digraphs to random graphs with fixed arc counts}

\author{
  Mike van Santvoort \\
  Department of Mathematics and Computer Science\\
  Eindhoven University of Technology\\
  \texttt{m.v.santvoort@tue.nl} \\
   \And
   Pim van der Hoorn \\
Department of Mathematics and Computer Science\\
  Eindhoven University of Technology\\
  \texttt{w.l.f.v.d.hoorn@tue.nl} \\
}

\begin{document}
\maketitle

\begin{abstract}
Consider a random graph model with $n$ vertices where each vertex has a vertex-type drawn from some discrete distribution. Suppose that the number of arcs to be placed between each pair of vertex-types is known, and that each arc is placed uniformly at random without replacement between one of the vertex-pairs with matching types. In this paper, we will show that under certain conditions this random graph model is equivalent to the well-studied inhomogeneous random digraph model. 

We will use this equivalence in three applications. First, we will apply the equivalence on some well known random graph models (the Erd\H{o}s-R\'enyi model, the stochastic block model, and the Chung-Lu model) to showcase what their equivalent counterparts with fixed arcs look like. Secondly, we will extend this equivalence to a practical model for inferring cell-cell interactions to showcase how theoretical knowledge about inhomogeneous random digraphs can be transferred to a modeling context. Thirdly, we will show how our model induces a natural fast algorithm to generate inhomogeneous random digraphs.
\end{abstract}

\keywords{Inhomogeneous random digraphs \and Model equivalence \and Cell-cell interaction model \and Random graph algorithm}

\section{Introduction}\label{sec:intro}
Random graphs models are becoming a more and more standardised tool to analyse real world networks. Their usefulness shines brightest in situations where data about the real word is difficult to obtain, or entirely unavailable. In such situations, random graphs are used to mimic real-world networks, so hypotheses can still be explored and tested, without the need for much data (see \cite{Drobyshevskiy2019RandomConcepts} for a review on modelling with random graphs). Moreover, random graphs are often defined through a process, making them generally efficient and simple to generate and analyse numerically (see e.g. \cite{Miller2011EfficientDegrees}). Some examples of the wide range of applications include using the stochastic block model (see \cite{Lee2019AClustering}) as a null-model to predict missing links in a graph (see \cite{Guimera2009MissingNetworks}), or using configuration-like models (see \cite{vanderHoorn2018TypicalModel}) to mimic and test the spread of a disease during an epidemic in \cite{Ball2018EvaluationStructure}.

One special random graph model that has been used extensively in both theoretical explorations and practical applications is the \emph{inhomogeneous random graph model}. In their seminal paper \cite{Bollobas2007TheGraphs} on the topic, Bollob\'as et al. managed to theoretically prove many relevant properties of the model. Subsequently, these theoretical properties could be exploited in practical application of the model. For example, they could be used in \cite{Stanley2019StochasticAttributes, Guimera2009MissingNetworks} to facilitate link prediction, or in \cite{Fortunato2016CommunityGuide} to benchmark community detection. Recently, theoretical knowledge on inhomogeneous random graphs have been extended to a directed version of the model (called \emph{inhomogeneous random digraphs}) in \cite{Cao2019OnDigraphs}, opening the door to more modelling opportunities.

While \cite{Cao2019OnDigraphs} provides results for many key properties of graphs, these cannot always be directly used in modelling efforts, since models that are used are often similar to inhomogeneous random graph, but not exactly equivalent. For example, in \cite{vanSantvoort2023MathematicallyMicroenvironment} a model is used to infer cell-cell interactions that is ``close'' to the model in \cite{Cao2019OnDigraphs}, with the exception that it fixes the number of arcs beforehand and includes connection rules stipulating where arcs can go. Because the model in \cite{vanSantvoort2023MathematicallyMicroenvironment} deviates from the inhomogeneous random graph setting, results from inhomegeneous random digraphs cannot be readily applied to this model. Therefore, the asymptotic behavior of the resulting network remains an open question, meaning properties had to be studied numerically using costly monte-carlo simulations.

In this paper, we will bridge the gap between theoretical knowledge from inhomogeneous random digraphs, and a class of random graph models that are used in practice. We call this ``new'' class of models \emph{arc assigned random digraphs}. These arc assigned random graphs differ from inhomogeneous random digraphs in the sense that their number of arcs is fixed, only to still be assigned a random position. In essence, the relation between inhomogeneous random digraphs and arc assigned random graphs is a generalisation to the relation between the classical Erd\H{o}s-R\'enyi model and Gilbert model as outlined in \cite{Janson2000RandomGraphs} (Section 1.4). We will provide this generalisation as the main result of our paper (Theorem~\ref{thm:IRD to ARD}). The definition of the models and the main result will be given in Section~\ref{sec:main result}.

We illustrate our main result by showing equivalence between some well-known random graph models. Additionally, we will extend our main result to include the model in \cite{vanSantvoort2023MathematicallyMicroenvironment}. This will constitute an extra major result (Theorem~\ref{thm:CCI to IRD}) that shows that the main result can be extended to classes of random graph models that do not directly fall into the arc assigned random digraph class. Finally, we will show that the arc assigned random digraph model provides a natural algorithm to generate inhomogeneous random digraphs. This algorithm will be linear in the amount of operations it has to execute, and will be conceptually simpler than the canonical linear algorithm to generate inhomogeneous random digraphs in \cite{Hagberg2015FastGraphs}. These applications will be given in Section~\ref{sec: applications}.

The remainder of the paper is concerned with proving our results. In Section~\ref{sec:proof thms} we will give the heuristics and machinery behind the proofs of the main theorems. Once these have been outlined, we will also execute the proofs of the main theorems. The proofs of the technical lemmas needed for these proofs will be given later in Section~\ref{sec:tool proof}.

\section{Main results}\label{sec:main result}
As mentioned in the introduction, we will generalise the equivalence that exists between the Erd\H{o}s-R\'enyi and Gilbert random graph model. In this generalisation, we will prove equivalence between two models that assign types to vertices. The difference between the two models will express itself in the way they assign edges to vertices. In this section, we aim to formalise the two equivalent models and give the conditions under which equivalence holds true. Finally, we will discuss these conditions and give examples that show how these equivalence fails when the assumptions are not met.

\subsection{Inhomogeneous random digraphs}
The first model in the equivalence, is the \emph{inhomogeneous random digraph model}. It constructs a graph $G_n$ by first defining the vertex set $[n] := \{1, 2, \ldots, n\}$, and assigning each vertex $v \in [n]$ a type $T_v$. This type is an independent sample from some overarching distribution $T$ that takes values in some set $\mathcal{S}$. We call $T$ the \emph{type distribution} of the model and $\mathcal{S}$ the \emph{type space}. 

After all vertices are assigned a type, the vertex set of $G_n$ is completed, and the arc set can be generated. This is done by fixing two functions $\kappa : \mathcal{S} \times \mathcal{S} \to \mathbb{R}^+_0$ and $\varphi_n : \mathcal{S} \times \mathcal{S} \to \mathbb{R}^+_0$. We call $\kappa$ the \emph{kernel} or the model, and $\varphi_n$ the \emph{perturbation function} of the model. For each pair of vertices $v$ and $w$ such that $v \neq w$ the arc $(v, w)$ becomes part of the arc set with probability \begin{equation}\left( \frac{\kappa(T_v, T_w)(1 + \varphi_n(T_v, T_w))}{n} \right)\wedge 1,\label{eq:arc prob}\end{equation} independent from the other arcs. Here, we defined $x \wedge y := \min\{x, y\}$ and equivalently $x \vee y := \max\{x, y\}$. Once the arc set has been generated, the construction of $G_n$ is completed. 

By defining the arc probabilities through \eqref{eq:arc prob}, it is implictly expected that the behaviour of the inhomogeneous random digraph model is captured through $\kappa$ and $T$ alone. It is generally assumed that $\varphi_n$ converges to zero in some manner as the number of vertices tends to infinity (see e.g. \cite{Cao2019OnDigraphs} Assumption 3.1 or \cite{Bollobas2007TheGraphs} Definition 2.9). Since it is notationally inconvenient to always explicitly separate the kernel and the perturbation function, we will write $\kappa_n := \kappa \cdot (1 + \varphi_n)$. In the same spirit, we will also abbreviate the inhomogeneous random digraph model by $\texttt{IRD}_n(T, \kappa_n)$.

This description of the inhomogeneous random digraph model is flexible, and admits much freedom in the choice of kernel, perturbation function and type distribution. Therefore, many classical random graph models fall within this framework (see Section~\ref{sec:classical models} or Example 2.1 in \cite{Cao2019OnDigraphs}). In this paper we shall restrict ourselves to discrete type spaces.

\begin{assumption}[Discrete type spaces]\label{ass:type distr}
    The type distribution $T$ takes values in some set $\mathcal{S} \subseteq \mathbb{N}$ and satisfies $\expec[T^\delta] < \infty$ for some $\delta > 0$. 
\end{assumption}
\begin{notation}
    We set $q_t := \prob(T =t)$ and we denote by $N_t$ the number of vertices with type $t \in \mathcal{S}$.
\end{notation}

\begin{remark}
In principle, Assumption~\ref{ass:type distr} is stated slightly restrictively for convenience. Practically, it only stipulates that the type distribution is discrete. In case the type distribution takes values in a countable set unequal to the natural number, then we can still relabel the types in such a way that Assumption~\ref{ass:type distr} is satisfied.
\end{remark}

\subsection{Arc assigned random digraphs}\label{sec:ARD}
In $\texttt{IRD}_n(T, \kappa_n)$ it is determined for each vertex-pair separately whether an arc is drawn in between them. For the second model in the equivalence, which we will call the \emph{arc assigned random digraph model}, this arc placement procedure changes. To construct a graph $G_n$ we still initially define the vertex set $[n]$, and assign each vertex $v \in [n]$ a type $T_v$ that is drawn independently from some type distribution $T$ that takes values in $\mathcal{S}$. However, now the number of arcs to be placed between two vertex types is fixed.

To make this formal, we define a function $\Lambda_n : \mathcal{S} \times \mathcal{S} \to \mathbb{N}$. We call $\Lambda_n$ the \emph{arc-count function}. The value of $\Lambda_n(t, s)$ for a tuple of vertex-\emph{types} $(t, s) \in \mathcal{S} \times \mathcal{S}$ encodes the number of arcs that will be placed in $G_n$ from a vertex with type $t$ to a vertex with type $s$. The assignment of arcs to vertex pairs takes place after vertices have been assigned a type. Given $\Lambda_n$ the arc assignment procedure is executed in the following steps:
\begin{enumerate}\itemsep0em
    \item Fix a vertex-type $t \in \mathcal{S}$ to which no arcs have been assigned yet, and define the set \[V_t := \left\{ v \in [n] : T_v = t \right\}.\] This set encodes all vertices of type $t$, and note it is deterministic after assigning types to vertices.
    \item Choose $\Lambda_n(t, s)$ arcs uniformly at random without replacement from the set \[(V_t \times V_s)\backslash \{(v, v) : v \in [n] \}.\] If the size of this set is smaller than $\Lambda_n(t, s)$, simply take the entire set. Add all arcs to the arc set of $G_n$.
    \item Repeat steps 1 and 2 until all vertex-type tuples have been considered.
\end{enumerate}

We will denote the realisation of this model by $\texttt{ARD}_n(T, \Lambda_n)$. Note for the arc assigned random digraph model that independence exists in the arc-assignment procedure between vertex-type pairs (i.e., between separate executions of step 1 and 2). However, for each fixed pair of vertex-types there is dependence between the chosen arc locations, since these are drawn uniformly at random \emph{without replacement}. 

\begin{remark}
A well-known special case of the arc assigned random graph model is obtained when we take $T = 1$ (i.e., we assign all vertices the type $1$) and $\Lambda_n(1, 1) = N$ for some fixed $N \in \mathbb{N}$. This turns out to be the directed equivalent to the original Erd\H{o}s-R\'enyi random graph model as described in \cite{Erdos1959OnI.}. For this model it is known that it is equivalent to the so-called Gilbert model\footnote{Often, this model is referred to as the Erd\H{o}s-R\'enyi model.} (see \cite{Gilbert1959RandomGraphs}), which can be seen $\texttt{IRD}_n(T, \kappa_n)$ with $\kappa_n(1, 1) = \lambda$ for some $\lambda \in \mathbb{R}^+_0$. Specifically, in Section 1.4 of \cite{Janson2000RandomGraphs} it is argued that equivalence holds whenever $N \approx \lambda n$. We will generalise this argument to show equivalence between $\texttt{IRD}_n(T, \kappa_n)$ and $\texttt{ARD}_n(T, \Lambda_n)$. This is the main result of the next section.
\end{remark}

\subsection{Equivalence between the two models}
To link $\texttt{IRD}_n(T, \kappa_n)$ and $\texttt{ARD}_n(T, \Lambda_n)$ we need to describe $\Lambda_n$ in terms of $\kappa_n$ in a clever way. A natural choice would be choosing $\Lambda_n$ to be the expected number of arcs between fixed vertex types in $\IRD$. Although natural, we will see this choice can cause issues for vertex-types that are rare given a fixed value of $n$. We will call these vertex-types \emph{unstable}. 

\begin{definition}[Unstable vertex-types]\label{def:stable vertices}
    Fix a number $n \in \mathbb{N}$, and a tolerance level $\tau \in (0, 1)$. Define the number \[u_n^\uparrow(\tau) := \inf\{t : q_s < n^{-1 + \tau} \text{ for all } s \geq t\}.\] We say a vertex-type $t \in \mathcal{S}$ is \emph{unstable at tolerance $\tau$} if $t \geq u_n^\uparrow(\tau)$. If a vertex-type is not \emph{unstable at tolerance $\tau$} we call it \emph{stable at tolerance $\tau$}. Instability for vertices can be defined analogously through their assigned types.
\end{definition}

We will see later (in Section~\ref{sec:discussion conditions} and \ref{sec:heuristics thm 1}) that unstable vertex-types may cause issues in the link between $\IRD$ and $\ARD$. Thus, we need to make an assumption on the behaviour of the kernel $\kappa$ at unstable vertex-types. Simply put, we will assume that the kernel is relatively small when a vertex-type is unstable, so that the influence of these types on the output of our models is negligable.

\begin{assumption}[Kernel bound]\label{ass:kernel bound}
    Fix an $n \in \mathbb{N}$ large and two vertex types $t,s \in \mathcal{S}$ of which at least one is unstable at a tolerance $\tau \in (0, 1)$. We assume there exist two constants $\alpha, C > 0$ with $\alpha \in (1/2 - \tau/2, 1/2)$ such that
    \begin{equation}\label{eq:kernel bound}
    \kappa(t, s) \leq \frac{C n^{-1/2 + \alpha}}{\sqrt{q_t q_s}}.
    \end{equation}
\end{assumption}
\begin{remark}
    The value of $q_t$ and $q_s$ in \eqref{eq:kernel bound} implicitly depend on $n$ and $\tau$, since we require at least one of $t$ and $s$ to be unstable. In Definition~\ref{def:stable vertices} we can see that stability is an $n$ and $\tau$ dependent property. For finite type-spaces we have that $\min_{t \in \mathcal{S}} q_t > 0$. Hence, if $n \to \infty$, then there will be a point at which $\min_{t \in \mathcal{S}} q_t > n^{-1 + \tau}$ for any $\tau \in (0, 1)$. Thus, when the type-space is finite we automatically have that Assumption~\ref{ass:kernel bound} is satisfied. 
\end{remark}

In the link between $\IRD$ and $\ARD$ we will be showing that probabilities of certain events are asymptotically the same. Inspired by Section 1.4 in \cite{Janson2000RandomGraphs} we will show that \emph{monotone} events can be translate between our two models. Note in our models that the definition of monotonicity will be slightly different from the definition in \cite{Janson2000RandomGraphs}. This is because we consider slightly different graphs: our graphs are directed and our vertices have types (i.e., are marked).

Because we consider marked graph, we need to be careful how we define and interpret sub-graphs. Given two marked graphs $G_1$ and $G_2$, we will say $G_1$ is a sub-graph of $G_2$ (and write $G_1 \subseteq G_2$) whenever:
\begin{enumerate}[label = (\arabic*)]
    \item The vertices of $G_1$ have the same marks as the vertices of $G_2$.
    \item The arcs of $G_2$ include the arcs of $G_1$.
\end{enumerate}
Under this interpretation of sub-graphs we can define what monotonicity means for our models.

\begin{definition}[Monotone events]\label{def:monotone events}
    Let $G_1$ and $G_2$ be two graphs such that $G_1 \subseteq G_2$. We say a collection $\mathcal{Q}_n$ of events is \emph{increasing} if $G_1 \in \mathcal{Q}_n$ implies $G_2 \in \mathcal{Q}_n$. Similarly, we say $\mathcal{Q}_n$ is \emph{decreasing} if $G_2 \in \mathcal{Q}_n$ implies $G_1 \in \mathcal{Q}_n$. Finally, we say $\mathcal{Q}_n$ is \emph{monotone} if it is either increasing or decreasing.
\end{definition}

We can now formulate the main result of the paper. Heuristically, it tells us that a monotone graph property is true for the $\ARD$ model whenever it is true for a ``range'' of $\IRD$ models.

\begin{theorem}[$\IRD$ to $\ARD$]\label{thm:IRD to ARD}
Fix three numbers $\alpha, \tau, C > 0$, a vertex-type distribution $T$ and a kernel $\kappa$ such that Assumption~\ref{ass:type distr} and \ref{ass:kernel bound} is satisfied. Suppose for some monotone event $\mathcal{Q}_n$ there exists a number $p \in [0, 1]$ such that
\[
\prob(\IRD_n(T, \kappa'_n) \in \mathcal{Q}_n) \to p,
\]
as $n \to \infty$ for all sequences $\kappa'_n $ that satisfy the inequality
\begin{equation}\label{eq:kappa seq bound}
|\kappa'_n(t, s) - \kappa(t, s) | \leq \frac{C n^{-1/2 + \alpha}}{\sqrt{q_t q_s}}.
\end{equation}
Then, for $\Lambda_n$ satisfying
\[\Lambda_n(t, s) = \begin{cases} \lfloor \kappa(t, s) q_t q_s n\rfloor, & \text{if } t \text{ and } s \text{ is stable,}\\ 0, & \text{else.} \end{cases}\] we have
\[
\prob(\ARD_n(T, \Lambda_n) \in \mathcal{Q}_n) \to p.
\]
\end{theorem}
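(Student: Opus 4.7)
The natural approach is a sandwich/coupling argument generalising the classical Erd\H{o}s--R\'enyi--Gilbert equivalence. My plan is to construct two perturbed kernels $\kappa_n^- \le \kappa \le \kappa_n^+$ whose perturbations saturate \eqref{eq:kappa seq bound}, so that by hypothesis $\prob(\IRD_n(T, \kappa_n^\pm) \in \mathcal{Q}_n) \to p$. I then exhibit a coupling under which, with probability $1 - o(1)$, the arc sets satisfy
\[E(\IRD_n(T, \kappa_n^-)) \subseteq E(\ARD_n(T, \Lambda_n)) \subseteq E(\IRD_n(T, \kappa_n^+))\]
after restricting to arcs between stable types. Combined with monotonicity of $\mathcal{Q}_n$, this sandwiches $\prob(\ARD_n(T, \Lambda_n) \in \mathcal{Q}_n)$ between two quantities both tending to $p$.

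The observation driving the coupling is that, conditional on the type assignment and on the number of arcs between type-$t$ and type-$s$ vertices, the $\IRD$ model places those arcs uniformly at random without replacement among the admissible pairs in $(V_t \times V_s) \setminus \{(v,v)\}$, which is precisely how $\ARD$ places its $\Lambda_n(t,s)$ arcs. Hence, once one shows that for each stable pair $(t,s)$ the arc count in $\IRD_n(T, \kappa_n^+)$ exceeds $\Lambda_n(t,s)$ with probability $1 - o(1)$ (and symmetrically undershoots for $\IRD_n(T,\kappa_n^-)$), a thinning coupling realises the inclusions above. The required concentration is a Chernoff bound on a sum of roughly $q_t q_s n^2$ independent Bernoullis with mean $\kappa(t,s) q_t q_s n (1+o(1))$; the perturbation $\kappa_n^+ - \kappa$ produces a gap of order $n^\alpha$ standard deviations, which suffices since $\alpha > 0$.

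To make the argument uniform, I would first establish the standard concentration $N_t = q_t n (1+o(1))$ with high probability for every stable $t$, and then union-bound the Chernoff inequalities over the stable type pairs, of which there are only polynomially many by Assumption~\ref{ass:type distr} together with Markov applied to $\expec[T^\delta]$. The contribution of arcs touching unstable vertices must be shown negligible in both models: for $\ARD$ this is automatic since $\Lambda_n$ vanishes there, and for $\IRD$ it follows from Assumption~\ref{ass:kernel bound}, which bounds the expected number of arcs involving an unstable type by $C n^{1/2+\alpha} \sqrt{q_t q_s}$ per pair---a quantity that sums to $o(1)$ over unstable pairs using $\alpha < 1/2$ and the moment bound on $T$. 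Monotonicity of $\mathcal{Q}_n$ then allows these negligible arcs to be inserted or removed freely when comparing the two models.

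The main obstacle I anticipate is calibrating the perturbation scale against both the floor in the definition of $\Lambda_n$ and the smallness of $q_t q_s$ for nearly-unstable stable pairs. The discrepancy $|\Lambda_n(t,s) - \kappa(t,s) q_t q_s n| \le 1$ is harmless in absolute terms, but when $q_t q_s$ is as small as $n^{-2+2\tau}$ the arc count itself is tiny and relative fluctuations are large; verifying that the Chernoff gap still dominates the sum of the floor error and the stochastic fluctuation, uniformly over all stable pairs, is precisely where the lower bound $\alpha > 1/2 - \tau/2$ in Assumption~\ref{ass:kernel bound} gets used, and where most of the technical bookkeeping will live.
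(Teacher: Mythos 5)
Your overall architecture (two kernels $\kappa_n^\pm$ sandwiching $\kappa$ within the band \eqref{eq:kappa seq bound}, the observation that $\IRD$ conditioned on its per-type-pair arc counts places arcs uniformly without replacement exactly as $\ARD$ does, Chernoff concentration of the counts union-bounded over the polynomially many stable pairs, and monotonicity to close the sandwich) is essentially the paper's proof. However, your treatment of unstable types contains a genuine gap. You claim that the expected number of arcs touching an unstable type is at most $C n^{1/2+\alpha}\sqrt{q_t q_s}$ per pair and that this ``sums to $o(1)$ over unstable pairs''. It does not: summing over all $t$ and all unstable $s$ gives a factor $\bigl(\sum_t \sqrt{q_t}\bigr)\bigl(\sum_{s \ge u_n^\uparrow}\sqrt{q_s}\bigr)$, and under Assumption~\ref{ass:type distr} (only $\expec[T^\delta]<\infty$ for some possibly small $\delta$) these square-root sums can diverge. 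Concretely, with $q_t \asymp t^{-1-\delta}$, $\delta<1$, and a bounded kernel (which satisfies Assumption~\ref{ass:kernel bound} on unstable pairs for large $n$), the expected number of arcs incident to unstable types is of order $n\,\prob(T \ge u_n^\uparrow(\tau))$, which is polynomially large in $n$, not $o(1)$. Moreover, the follow-up step ``monotonicity allows these arcs to be inserted or removed freely'' is not a valid use of monotonicity: for an increasing event, deleting arcs only gives a one-sided inequality, and deleting a non-negligible (possibly pivotal) set of arcs can change the probability by a constant; nothing in the hypotheses says $\mathcal{Q}_n$ is insensitive to arcs at unstable types.

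The repair is exactly the device the paper uses, and it changes the role of Assumption~\ref{ass:kernel bound}: define $\kappa_n^\pm(t,s)=0$ whenever $t$ or $s$ is unstable (for the lower kernel $\max\{\kappa - C n^{-1/2+\alpha}/\sqrt{q_t q_s},\,0\}$ this truncation happens automatically, since Assumption~\ref{ass:kernel bound} makes the perturbation swallow $\kappa$ there; for the upper kernel it must be imposed by hand). These zeroed-out kernels still satisfy \eqref{eq:kappa seq bound} precisely because of Assumption~\ref{ass:kernel bound}, so the theorem's hypothesis applies to them, and the comparison $\IRD$ models then contain no arcs at unstable pairs at all. One never needs to show that arcs at unstable types are negligible in $\IRD_n(T,\kappa)$ itself; the hypothesis quantifying over all admissible kernel sequences is what absorbs them. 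With that modification your sandwich argument goes through as in the paper (upper bound: $A_n^+(t,s)\ge\Lambda_n(t,s)$ for all stable pairs w.h.p. and thinning; lower bound: $A_n^-(t,s)\le\Lambda_n(t,s)$ w.h.p.; both error sums controlled by the Chernoff bounds and the polynomial bound on $u_n^\uparrow$).
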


\subsection{Discussion of the conditions}\label{sec:discussion conditions}
Theorem~\ref{thm:IRD to ARD} leans on two assumptions: the event $\mathcal{Q}_n$ must be monotone, and the kernel inequality \eqref{eq:kernel bound} should be satisfied. Following \cite{Janson2000RandomGraphs}, the need for monotonicity can be seen by considering the event that the graph contains an exact number of arcs. The need for \eqref{eq:kernel bound} can be seen by looking at an event that only involves unstable vertices. We will discuss both examples below. Basically, a mismatch between $\IRD$ and $\ARD$ occurs whenever the random number of arcs between given vertex-types in $\IRD$ varies too much when compared to the fixed number of arcs in $\ARD$. There is ``less randomness'' in $\ARD$ than in $\IRD$, thus problems occur if we lean heavily on the randomness $\IRD$ has, but $\ARD$ does not.

\paragraph{Monotonicity.} Let $\mathcal{Q}_n$ be the event that the graph contains exactly $n$ arcs, and note this event is not monotone. We consider $\ARD_n(1, n)$. In other words, our model has one vertex-type and $n$ arcs will be placed between the vertices. Note that this event is non-monotone. Moreover, for this $\mathcal{Q}_n$ it will be difficult to relate $\IRD$ and $\ARD$, since the number of arcs in $\ARD$ is fixed, while in $\IRD$ it is random. Thus, $\mathcal{Q}_n$ will be true with probability $1$ for $\ARD$ if we take $n$ edges as input, while for $\IRD$ the probability of $\mathcal{Q}_n$ will always be strictly less than one. 

To see this more rigorously, note first that $\prob(\ARD_n(1, n) \in \mathcal{Q}_n) = 1$ for all $n$. However, when we look at $\IRD_n(1, 1 + \varepsilon_n)$, i.e. the $\IRD$ model with kernel $1 + \varepsilon_n$ for some $\varepsilon_n \to 0$, then we notice that the number of arcs follows a Binomial distribution with $n^2$ trials and succes probability $(1 + \varepsilon_n)/n$. Thus, the median of this distribution for large $n$ is given by either $n$, $n + 1$ or $n - 1$, implying that e.g. $\prob(\IRD_n(1, 1 + \varepsilon_n) \in \mathcal{Q}_n) < 3/4$ (since the probability of being smaller or equal to the median is roughly $1/2$).

\begin{remark}
    Note the $\IRD$ model in the above example is equivalent to the Gilbert model. Principally, if the kernel does not vary in $\IRD$, then the vertex-type serve no distinguishing purpose. However, the vertex-types do play distinguishing roles in $\ARD$ due to us fixing a (different) number of arcs per vertex-type pair. Hence, the above example also highlights how in certain cases the extra randomness in $\IRD$ makes the overarching model simpler.
\end{remark}

\begin{remark}
    As argued in \cite{Janson2000RandomGraphs} (Remark 1.14), monotonicity is a sufficient condition in Theorem~\ref{thm:IRD to ARD}, but not necessary. One could impose alternative restrictions stipulating how $\prob(\ARD_n(T, \Lambda'_n) \in \mathcal{Q}_n)$ behaves for choices of $\Lambda_n'$ ``near'' $\Lambda_n$. This, however, would make the statement and proof of Theorem~\ref{thm:IRD to ARD} more technical, so we refrained from making this generalisation. 
\end{remark}

\paragraph{Kernel inequality.} To explain the need for \eqref{eq:kernel bound} we will consider $\IRD(T, \kappa)$ with $q_t = C / t^3$ and $\kappa(t, s) = C^{-2}$. We will consider $\mathcal{Q}_n$ to be the event that there are no arcs from a vertex of type $1$ to a vertex of type $\lceil \sqrt[3]{n} \rceil$. Note that vertex-type $\lceil \sqrt[3]{n} \rceil$ is unstable, because the sequence $(q_t)_{t \geq 1}$ is decreasing and for $n$ large
\[
q_{\lceil \sqrt[3]{n} \rceil} = \frac{C}{\lceil \sqrt[3]{n} \rceil^3} < \frac{C}{n} \leq \frac{1}{n^{1 - \tau}},
\]
for any $\tau \in (0, 1)$. We will consider the sub-sequence of cubic values of $n$. We will derive a lower and upper bound on the probability that $\mathcal{Q}_n$ occurs in the $\IRD$ model. We start with the lower-bound.

In this specific $\IRD$ model, we see that the number of vertices with type $\sqrt[3]{n}$ is given by $N_{\sqrt[3]{n}} \sim \texttt{Bin}(n, C / n)$. Thus, as $n \to \infty$ we see that $N_{\sqrt[3]{n}} \to \texttt{Poi}(C)$. Particularly, this means that \begin{align*}\prob(\IRD_n(T, \kappa) \in \mathcal{Q}_n) &\geq \prob(N_{\sqrt[3]{n}} = 0) + \prob(\IRD_n(T, \kappa) \in \mathcal{Q}_n \;|\; N_{\sqrt[3]{n}} = 1) \prob( N_{\sqrt[3]{n}} = 1),\\ &= \left( \exp(- C) + \prob(\IRD_n(T, \kappa) \in \mathcal{Q}_n \;|\; N_{\sqrt[3]{n}} = 1) C \exp(- C) \right)(1 + o(1)). \end{align*}

Now, observe that conditional on $\{N_{\sqrt[3]{n}} = 1\}$ the number of vertices with type $1$ is given by $N_1 \sim \texttt{Bin}(n - 1, C / (1 - 1/n))$. In particular, we have that $N_1 < n$. Thus, we can bound
\[
\prob(\IRD_n(T, \kappa) \in \mathcal{Q}_n \;|\; N_{\sqrt[3]{n}} = 1) > \prob(\IRD_n(T, \kappa) \in \mathcal{Q}_n \;|\; N_{\sqrt[3]{n}} = 1, N_1 = n).
\]

If we denote by $A_n$ the number of arcs from vertices of type 1 to vertices of type $\sqrt[3]{n}$, then we have that $A_n \sim \texttt{Bin}(N_1 N_{\sqrt[3]{n}}, C^{-2}/n)$. Thus, knowing that $N_1 = n$ and $N_{\sqrt[3]{n}} = 1$ entails $A_n \sim \texttt{Bin}(n, C^{-2}/n)$. Note that this binomial distribution converges to $\texttt{Poi}( C^{-2})$, thus we find
\[
\prob(\IRD_n(T, \kappa) \in \mathcal{Q}_n \;|\; N_{\sqrt[3]{n}} = 1, N_1 = n) \geq \prob(\texttt{Bin}(n , C^{-2} / n) = 0) = \exp(-  C^{-2}) (1 + o(1)).
\]
All in all, this shows for $\IRD$ that we have the following lower-bound:
\begin{equation}\label{eq:kernel assumption discussion lower bound}
\prob(\IRD_n(T, \kappa) \in \mathcal{Q}_n) \geq \left( \exp(- C) + C \exp(- C^{-2} - C) \right) (1 + o(1)).
\end{equation}
We can derive a upper-bound using the same arguments. First note that
\[
\prob(\IRD_n(T, \kappa) \in \mathcal{Q}_n) \leq 1 -  \prob(\IRD_n(T, \kappa) \not\in \mathcal{Q}_n \;|\; N_{\sqrt[3]{n}} = 1) \prob( N_{\sqrt[3]{n}} = 1),
\]
We again observe that conditional on $\{N_{\sqrt[3]{n}} = 1\}$ the number of vertices with type $1$ is given by $N_1 \sim \texttt{Bin}(n - 1, C / (1 - 1/n))$. In particular, the median of this distribution is given approximately by $n C$, meaning for $\varepsilon > 0$ small we have that $\prob(N_1 > \varepsilon n) > 1/2$. Thus, we can bound
\[
\prob(\IRD_n(T, \kappa) \not\in \mathcal{Q}_n \;|\; N_{\sqrt[3]{n}} = 1) \geq \prob(\IRD_n(T, \kappa) \not\in \mathcal{Q}_n \;|\; N_{\sqrt[3]{n}} = 1, N_{1} > \varepsilon n) / 2.
\]
In this case, knowing that $N_1 > \varepsilon n$ and $N_{\sqrt[3]{n}} = 1$ entails $A_n \succeq \texttt{Bin}(\varepsilon n, C^{-2}/n)$. Because this binomial converges to a $\texttt{Poi}(\varepsilon C^{-2})$ random variable, we have that
\[
\prob(\IRD_n(T, \kappa) \not\in \mathcal{Q}_n \;|\; N_{\sqrt[3]{n}} = 1, N_{1} > \varepsilon n) / 2 \geq \prob(\texttt{Bin}(\varepsilon n, C^{-2}/n) > 0)/2 = \left( 1 - \exp(-\varepsilon C^{-2}) \right) (1 + o(1))/2. 
\]
Hence, we find the upper-bound
\begin{equation}\label{eq:kernel assumption discussion upper bound}
  \prob(\IRD_n(T, \kappa) \in \mathcal{Q}_n) \leq 1 - C  \exp(- C) \cdot \frac{ \left( 1 - \exp(-\varepsilon C^{-2}) \right) (1 + o(1))}{2}.  
\end{equation}
Now we compare \eqref{eq:kernel assumption discussion lower bound} and \eqref{eq:kernel assumption discussion upper bound} to the possible intuitive choices of inputs in $\ARD$. If we look at the $\ARD$ model with the expected number of arcs between fixed vertex types from $\IRD$ as its input, then we would consider $\ARD_n(T, \Lambda_n')$ with $\Lambda_n'(t, s) = \lfloor n / (t^3 s^3)  \rfloor$. Particularly, we have that $\Lambda_n'(1, \sqrt[3]{n}) =  1$. Thus, if there is a vertex with type $\sqrt[3]{n}$, then we will also have at least one arc from a vertex with type $1$ to one with type $\sqrt[3]{n}$. In other words using \eqref{eq:kernel assumption discussion lower bound}: \[\lim_{n\to \infty} \prob(\ARD_n(T, \Lambda_n') \in \mathcal{Q}_n) = \exp(- C) <  \exp(- C) + C \exp(- C^{-2} - C) \leq \liminf_{n \to \infty} \prob(\IRD_n(T, \kappa) \in \mathcal{Q}_n). \]

Alternatively, if we were to look at the $\ARD$ model from Theorem~\ref{thm:IRD to ARD} with $\Lambda_n(1, \sqrt[3]{n}) =  0$. In particular this means using \eqref{eq:kernel assumption discussion upper bound} that
\[\lim_{n\to \infty} \prob(\ARD_n(T, \Lambda_n) \in \mathcal{Q}_n) = 1 >   1 - C  \exp(- C) \cdot \frac{ \left( 1 - \exp(-\varepsilon C^{-2}) \right)}{2} \geq \limsup_{n \to \infty} \prob(\IRD_n(T, \kappa) \in \mathcal{Q}_n). \]

Thus, we see that the probability of $\mathcal{Q}_n$ occurring in $\IRD$ and $\ARD$ significantly differs. This makes it difficult to link the models. The mismatch occurs, since the occurrence of $\mathcal{Q}_n$ is heavily influenced by both vertex-type assignment and arc generation in $\IRD$, while it is only influenced by vertex-type generation in $\ARD$. If it is possible to link the models, then the arc generation in $\IRD$ cannot have a large influence on the final probability.

\section{Applications}\label{sec: applications}
In this section we will show some of the consequences of Theorem~\ref{thm:IRD to ARD}. Specifically, we will focus on four aspects. Firstly, we will highlight the consequences of our result in some ``classical'' random graph models. Secondly, we will show that the connection between $\IRD$ and $\ARD$ in Theorem~\ref{thm:IRD to ARD} can be adapted to forge a connection between $\IRD$ and models that fall outside of the $\ARD$ class. As a running example, we will investigate a recent model to infer cell-cell interaction networks given in \cite{vanSantvoort2023MathematicallyMicroenvironment}. Thirdly, we will show how our result can be used to compute properties of $\ARD$ based on calculations in $\IRD$. Fourthly, we will explain how our results provide an intuitive linear time algorithm to generate random graphs.

\subsection{Classical random graph models}\label{sec:classical models}

\subsubsection{Directed Erd\H{o}s-R\'enyi and Gilbert model}\label{sec:ER}
We recall that the directed counterpart of the classical Gilbert model in \cite{Gilbert1959RandomGraphs} can be seen as $\IRD_n(1, \lambda)$ for some $\lambda > 0$, while the directed counterpart of the Erd\H{o}s-R\'enyi model in \cite{Erdos1959OnI.} can be seen as $\ARD(1, m)$ for some $m \in \mathbb{N}$. It is important to observe in this simple setting that $\mathcal{S} = \{1 \}$. In particular, this means that vertex-type $1$ is always stable according to Definition~\ref{def:stable vertices}, irrespective of the choice of $\tau$. Hence, the parameter $\tau$ plays no role, and $\alpha$ can be chosen as close to zero as we like without invalidating Assumption~\ref{ass:kernel bound}.

In fact, since $\alpha$ can be chosen arbitrarily close to zero, we may replace the fixed $\alpha$ in \eqref{eq:kappa seq bound} with a sequence $\alpha_n$ that converges to zero arbitrarily slowly. This will make the condition \eqref{eq:kappa seq bound} stronger, meaning there will be less sequences that satisfy it. This, however, is good news in light of Theorem~\ref{thm:IRD to ARD}, because it means the $\IRD_n(\kappa_n')$ probability has to converge for fewer sequences $\kappa_n'$. 

In light of this discussion, writing $\texttt{Gil}_n(\lambda)$ for the (directed) Gilbert model and $\texttt{ER}_n(m)$ for the (directed) Erd\H{o}s-R\'enyi model, the consequence of Theorem~\ref{thm:IRD to ARD} for these models is as follows.

\begin{corollary}[From Erd\H{o}s-R\'enyi to Gilbert]\label{cor:ER to Gil}
Fix a constant $C > 0$, a decreasing sequence $\alpha_n \in (0, 1/2)$ and a model parameter $\lambda > 0$. Suppose for some monotone event $\mathcal{Q}_n$ there exists a $p \in [0, 1]$ such that 
\[
\prob(\texttt{Gil}_n(\lambda_n) \in \mathcal{Q}_n) \to p,
\]
as $n \to \infty$ for all sequences $\lambda_n$ satisfying
\begin{equation*}
|\lambda_n - \lambda | \leq C n^{-1/2 + \alpha_n}.
\end{equation*}
Then, also
\[
\prob(\texttt{ER}_n(\lfloor \lambda n \rfloor) \in \mathcal{Q}_n) \to p.
\]
\end{corollary}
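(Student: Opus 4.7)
The plan is to derive the corollary as the specialization of Theorem~\ref{thm:IRD to ARD} to the degenerate type distribution $T\equiv 1$. I would first identify the models in the $\IRD/\ARD$ framework: with $T\equiv 1$, type space $\mathcal{S}=\{1\}$, kernel $\kappa(1,1)=\lambda$ and perturbation $\varphi_n\equiv 0$, the model $\IRD_n(T,\kappa_n)$ with $\kappa_n(1,1)=\lambda_n$ is exactly $\texttt{Gil}_n(\lambda_n)$, and taking $\Lambda_n(1,1)=m$, the model $\ARD_n(T,\Lambda_n)$ is exactly $\texttt{ER}_n(m)$.

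Next I would verify that the assumptions of Theorem~\ref{thm:IRD to ARD} are met. Assumption~\ref{ass:type distr} is immediate because $\mathcal{S}=\{1\}\subseteq\mathbb{N}$ and $\expec[T^\delta]=1$ for all $\delta>0$. For Assumption~\ref{ass:kernel bound}, observe that $q_1=1>n^{-1+\tau}$ for every $\tau\in(0,1)$ and every $n$, so vertex-type $1$ is always stable; consequently there are no unstable pairs and \eqref{eq:kernel bound} holds vacuously, imposing no constraint on $\alpha$. Plugging into the arc-count formula of Theorem~\ref{thm:IRD to ARD} then gives $\Lambda_n(1,1)=\lfloor \kappa(1,1)\,q_1 q_1\,n\rfloor=\lfloor \lambda n\rfloor$, matching the parameter in the conclusion.

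The remaining task, which I expect to be the main obstacle, is to justify replacing the fixed exponent $\alpha$ in \eqref{eq:kappa seq bound} by a decreasing sequence $\alpha_n\downarrow 0$. As highlighted in the discussion preceding the corollary, shrinking the window $Cn^{-1/2+\alpha_n}$ sharpens the inequality, so the corollary's hypothesis is formally \emph{weaker} than a direct application of Theorem~\ref{thm:IRD to ARD} would demand. I would resolve this by revisiting the estimates in the proof of Theorem~\ref{thm:IRD to ARD} and noting that, because $\{1\}$ is the only type, the lower bound $\alpha>1/2-\tau/2$ coming from Assumption~\ref{ass:kernel bound} is inoperative; $\alpha$ then enters only through the size of the kernel perturbation window, and the monotonicity and coupling steps driving the theorem remain valid with any slowly decreasing $\alpha_n$, so long as $Cn^{-1/2+\alpha_n}$ still dominates the $O(n^{-1/2})$ binomial fluctuations of the arc count in $\texttt{Gil}_n(\lambda)$. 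This slow-decay regime is precisely the one captured by the phrase ``arbitrarily slowly'' in the preceding paragraph, and formalising it rigorously is, I expect, the only genuinely new work beyond citing Theorem~\ref{thm:IRD to ARD}.
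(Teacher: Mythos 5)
Your proposal is correct and follows essentially the same route as the paper, which justifies Corollary~\ref{cor:ER to Gil} exactly by specialising Theorem~\ref{thm:IRD to ARD} to the single-type case (Gilbert as $\IRD$, Erd\H{o}s--R\'enyi as $\ARD$, type $1$ always stable so Assumption~\ref{ass:kernel bound} is vacuous and $\Lambda_n(1,1)=\lfloor \lambda n\rfloor$) and by arguing, as you do, that the fixed $\alpha$ in \eqref{eq:kappa seq bound} can be replaced by a slowly decreasing $\alpha_n$ because the theorem's concentration estimates only need the window $Cn^{-1/2+\alpha_n}$ to dominate the $O(\sqrt{n})$ fluctuations of the arc count. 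Your explicit caveat that this forces $n^{\alpha_n}\to\infty$ is precisely what the paper leaves implicit in the phrase ``arbitrarily slowly''.
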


Corollary~\ref{cor:ER to Gil} can be identified as being almost equivalent to Proposition 1.13 in \cite{Janson2000RandomGraphs}. The only difference is that they can choose $\alpha_n = 0$ while for us it is converging arbitrarily slowly to zero. The slightly stronger assumption for us emerges, because we use it in the more general cases to overcome the extra randomness introduced by $T$.

\subsubsection{Stochastic block model}
The stochastic block model, as described in e.g. \cite{Abbe2018CommunityDevelopments}, is a random graph model classically used to investigate community detection. The model creates graphs with vertex-set $[n]$, and each vertex is given a type from the set $\mathcal{S} = \{1, 2, \ldots, r\}$ for some fixed $r > 1$. Then, an arc between two vertices $v, w \in [n]$ with types $T_v$ and $T_w$ is drawn with some fixed probability $\pi(T_v, T_w)$, independently of the other arcs. Hence, the connection procedure is fully defined when vertex-types and the function $\pi : \mathcal{S} \times \mathcal{S} \to (0, 1)$ are known. We denote this model by $\texttt{SBM}_{n, r}(T, \pi)$.

Although the stochastic block model shows many similarities with $\IRD$, there are still two major differences from our context. Firstly, in $\IRD$ the connection probabilities scale with $1/n$ while in the stochastic block model $\pi$ does not have to decline as $n \to \infty$. Thus, we will assume in the stochastic block model there exists a kernel $\kappa$ such that for two vertex-types $t, s \in \mathcal{S}$ we have $\pi(t, s) = \kappa(t, s) / n$. Here, division by $n$ is needed to make the graph sparse.

Secondly, the stochastic block model usually fixes deterministic vertex-types (see e.g. \cite{Lee2019AClustering}), while we draw them randomly from some type distribution $T$. The goal of these deterministic types is to ensure each vertex-type $t \in \mathcal{S}$ covers approximately a proportion $q_t$ of vertices. In light of this goal, it is not too dissimilar to give each vertex $v \in [n]$ a \emph{random} type $T_v$ with $\prob(T_v = t) = q_t$, since due to the law of large numbers (or more precisely Lemma~\ref{lem:well-concentrated types}) we have that $N_t / n \approx q_t$ for all $t \in \mathcal{S}$ when $n$ is large. Concluding, we can see the stochastic block model as $\IRD_n(T, n \pi)$ where $\kappa = n \pi$ is some kernel, and $T$ is a discrete type-distribution taking values in $[r]$ for some fixed $r > 0$.

The $\ARD$ ``equivalent'' to the stochastic block model is also sometimes seen in literature (see \cite{Peixoto2019BayesianBlockmodeling}). It is called the microcanonical stochastic block model, and instead of fixing connection probability function $\pi$ it simply fixes a list $e_n : \mathcal{S} \times \mathcal{S} \to \mathbb{N}$ encoding the number of edges that need to be placed between vertices of two given types. We denote this model by $\texttt{MSBM}_{n, r}(T, e_n)$ and we see that it is equivalent to $\ARD_n(T, e_n)$.

When the number of vertex-types is finite, recall that Assumption~\ref{ass:kernel bound} is automatically satisfied. Similar to the discussion in Section~\ref{sec:ER}, this means $\tau$ plays no role and $\alpha$ can be chosen however we fancy. Additionally, since $q^\downarrow := \inf_t\{q_t\}> 0$, we can bound $\sqrt{q_t q_s} \geq q^\downarrow$ in \eqref{eq:kappa seq bound} and merge it with the constant $C$ to simplify the condition that all sequences $\kappa'_n$ must satisfy in Theorem~\ref{thm:IRD to ARD}. Thus, the consequence our main result for the stochastic block model can be formulated as follows.

\begin{corollary}[Fixing arcs in the stochastic block model]
    Fix a constant $C > 0$, a decreasing sequence $\alpha_n \in (0, 1/2)$, and a type distribution $T$ with $[r]$ as its support for some $r > 1$. Furthermore, fix a probability function $\pi_n = \kappa / n$ for some $n$-independent function $\kappa$. Suppose for some monotone event $\mathcal{Q}_n$ there exists a number $p \in [0, 1]$ such that 
\[
\prob(\texttt{SBM}_{n, r}(T, \pi'_n) \in \mathcal{Q}_n) \to p,
\]
as $n \to \infty$ for all sequences $\pi'_n$ satisfying
\begin{equation*}
|\pi_n'(t, s) - \pi_n(t, s) | \leq C n^{-3/2 + \alpha_n}.
\end{equation*}
Then, also for $e_n(t, s) = \lfloor \pi_n(t, s) q_t q_s n^2 \rfloor$ we have that
\[
\prob(\texttt{MSBM}_{n, r}(T, e_n)  \in \mathcal{Q}_n) \to p.
\]
\end{corollary}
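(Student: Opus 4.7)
The plan is to recognize $\texttt{SBM}$ and $\texttt{MSBM}$ as special cases of $\IRD$ and $\ARD$ respectively, and then invoke Theorem~\ref{thm:IRD to ARD}. Identify $\texttt{SBM}_{n, r}(T, \pi) = \IRD_n(T, n\pi)$ by taking the kernel to be $n\pi$ with zero perturbation function, and $\texttt{MSBM}_{n, r}(T, e_n) = \ARD_n(T, e_n)$ directly from the construction. With $\kappa := n\pi_n$ independent of $n$ by hypothesis, the formula $\Lambda_n(t, s) = \lfloor \kappa(t, s) q_t q_s n \rfloor$ from Theorem~\ref{thm:IRD to ARD} reduces to $\lfloor \pi_n(t, s) q_t q_s n^2 \rfloor = e_n(t, s)$, so the $\ARD$ input matches $e_n$ exactly.

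Next, check the assumptions. Assumption~\ref{ass:type distr} is immediate since $T$ has finite support $[r]$. Assumption~\ref{ass:kernel bound} is automatically satisfied by the remark following its statement: with $q^\downarrow := \min_{t \in [r]} q_t > 0$, the inequality $q^\downarrow > n^{-1 + \tau}$ holds for all sufficiently large $n$ and every $\tau \in (0, 1)$, so no vertex-type is unstable and the kernel bound \eqref{eq:kernel bound} applies vacuously. Consequently, both $\alpha$ and $\tau$ may be chosen freely within their admissible ranges.

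To translate the perturbation condition, multiply through by $n$: writing $\kappa'_n := n\pi'_n$, the bound $|\pi'_n(t, s) - \pi_n(t, s)| \leq C n^{-3/2 + \alpha_n}$ is equivalent to $|\kappa'_n(t, s) - \kappa(t, s)| \leq C n^{-1/2 + \alpha_n}$. Using $\sqrt{q_t q_s} \geq q^\downarrow$, setting $C' := C q^\downarrow$ shows that any $\kappa'_n$ obeying the theorem's bound $C' n^{-1/2 + \alpha}/\sqrt{q_t q_s}$ also obeys $|\kappa'_n - \kappa| \leq C n^{-1/2 + \alpha}$, which is in turn tighter than the corollary's bound $C n^{-1/2 + \alpha_n}$ whenever $\alpha \leq \alpha_n$. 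Thus, provided the decreasing sequence $\alpha_n$ admits a strictly positive limit $\alpha_\infty$, we fix any $\alpha \in (0, \alpha_\infty)$: the corollary's hypothesis then supplies convergence for every sequence in the (smaller) neighborhood required by the theorem's hypothesis, and Theorem~\ref{thm:IRD to ARD} yields the conclusion.

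The main obstacle is the case $\alpha_n \to 0$, where no fixed $\alpha > 0$ is uniformly dominated by $\alpha_n$; this is the same technical nuance as in Corollary~\ref{cor:ER to Gil}. The resolution is to inspect the proof of Theorem~\ref{thm:IRD to ARD} in Section~\ref{sec:proof thms}: the sandwiching of $\ARD_n(T, \Lambda_n)$ between perturbed $\IRD$ models requires only that the perturbation scale $n^{-1/2 + \alpha}$ exceed the $O(n^{-1/2})$ arc-count fluctuations, which uses positivity of $\alpha$ pointwise but not uniform positivity. The same argument therefore goes through with a decreasing sequence $\alpha_n$ in place of a fixed $\alpha$, which yields the corollary in full generality.
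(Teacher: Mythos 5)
Your route is the same as the paper's: the corollary is obtained there, without a separate proof, by identifying $\texttt{SBM}_{n,r}(T,\pi_n)$ with $\IRD_n(T,\kappa)$ for $\kappa = n\pi_n$ and $\texttt{MSBM}_{n,r}(T,e_n)$ with $\ARD_n(T,e_n)$, noting that a finite type space makes Assumption~\ref{ass:kernel bound} vacuous (so $\tau$ and $\alpha$ are free), absorbing $\sqrt{q_t q_s} \geq q^\downarrow$ into the constant, and invoking Theorem~\ref{thm:IRD to ARD} together with the observation of Section~\ref{sec:ER} that the fixed $\alpha$ may be replaced by a decreasing sequence. Your first three paragraphs reproduce this faithfully, and your argument is complete whenever $\alpha_\infty := \lim_n \alpha_n > 0$: fixing $\alpha \in (0,\alpha_\infty)$ and $C' = Cq^\downarrow$, every kernel in the theorem's ball lies in the corollary's ball, so the hypothesis transfers and the theorem applies with $\Lambda_n = e_n$.

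The gap is in your last paragraph. It is not true that the proof of Theorem~\ref{thm:IRD to ARD} ``uses positivity of $\alpha$ pointwise but not uniform positivity.'' What the sandwich actually needs, in Lemmas~\ref{lem:undershoot mixed binomials} and~\ref{lem:overshoot mixed binomials}, is that the gap $C n^{1/2+\alpha}\sqrt{q_t q_s}$ dominate terms of order $\log(n)\,n^{1/2}$ coming from the binomial fluctuations and from the well-concentration window of Definition~\ref{def:well-concentrated types}; in the finite-type case this amounts to $n^{\alpha_n}/\log n \to \infty$ once $\alpha$ is replaced by $\alpha_n$. For a sequence such as $\alpha_n = 1/\log n$ one has $n^{\alpha_n} = e$ bounded, the error terms $\xi_n, \zeta_n$ in the proof no longer vanish, and your claim that ``the same argument goes through'' fails. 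So your proof covers $\liminf_n \alpha_n > 0$ and slowly decaying sequences, but not arbitrary decreasing $\alpha_n \in (0,1/2)$; to be fair, the paper's own statement carries the same issue, since Section~\ref{sec:ER} only justifies replacement by a sequence ``converging arbitrarily slowly to zero,'' which is precisely the slowness condition your final step silently drops.
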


\subsubsection{Chung-Lu model}\label{sec:CL}
In the Chung-Lu model, for which the undirected equivalent is described in \cite{Chung2006TheDegrees}, each vertex $v \in [n]$ is given a weight $w_v > 0$. Given the weights $w_v$ and $w_u$ of two vertices $v, u \in [n]$ and the sum of all weights $\ell_n$, the probability that an arc is drawn from $v$ to $u$ is given by $w_v w_u / \ell_n$ independent of the other arcs. This is again close to the setting of $\IRD$, but we will make the slight modification (that is often done; see e.g. Chapter 6 of \cite{Hofstad2016RandomNetworks}) that weights are drawn independently from a weight distribution $W$. Then, we find ourselves in our setting. Specifically, the Chung-Lu model can now be seen as $\IRD_n(W, \kappa_n)$ with
\[
\kappa_n(t, s) = \frac{ts}{\sum_{v \in [n]} W_v}. 
\]
Assuming the first moment of $W$ is finite, we can explicitly identify the kernel and perturbation function of the Chung-Lu model. Setting $\overline{W}$ to be the empirical mean of the weights we find
\[
\kappa_n(t, s)/n = \frac{ts/\expec[W]}{n} \cdot \frac{\expec[W]}{\overline{W}}.
\]
Hence, for the kernel we find $\kappa(t, s) = ts / \expec[W]$ and for the perturbation function we find $\varphi_n(t, s) = \expec[W] / \overline{W} - 1$. Since $W$ might have an infinite support, it is not immediately clear that Assumption~\ref{ass:kernel bound} is satisfied. Luckily, only a mild additional assumption on $W$ is needed. See Section~\ref{app:extra proofs} for the proof.

\begin{prop}[Assumption satisfaction for Chung-Lu]\label{prop:ass CL}
    Suppose there exists a $\varepsilon > 0$ for which $\expec[W^{1 + \varepsilon}] < \infty$. Then, there exists a tolerance $\tau \in (0, 1)$ for which Assumption~\ref{ass:kernel bound} is satisfied.
\end{prop}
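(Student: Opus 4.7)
The plan is to verify Assumption~\ref{ass:kernel bound} directly for the Chung-Lu kernel $\kappa(t,s) = ts/\expec[W]$. Writing $M := \expec[W^{1+\varepsilon}] < \infty$, Markov's inequality applied to $W^{1+\varepsilon}$ yields the pointwise tail bound $q_t = \prob(W=t) \leq \prob(W \geq t) \leq M/t^{1+\varepsilon}$. After rearranging the target inequality \eqref{eq:kernel bound} into the multiplicative form $ts\sqrt{q_tq_s} \leq C\,\expec[W]\,n^{-1/2+\alpha}$, it suffices to control $t\sqrt{q_t}$ and $s\sqrt{q_s}$ separately; without loss of generality $t$ is the unstable coordinate and so additionally satisfies $q_t < n^{-1+\tau}$.

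The main step is a two-source interpolation for $\sqrt{q_t}$: I would split $\sqrt{q_t} = q_t^{a}\cdot q_t^{1/2-a}$, bound the first factor by instability $q_t^a \leq n^{-a(1-\tau)}$ and the second by Markov $q_t^{1/2-a} \leq M^{1/2-a}\, t^{-(1+\varepsilon)(1/2-a)}$, and choose the free parameter $a=(\varepsilon-1)/(2(1+\varepsilon))$ so that the $t$-exponent cancels. This yields
\[
t\sqrt{q_t} \;\leq\; M^{1/(1+\varepsilon)}\, n^{-(\varepsilon-1)(1-\tau)/(2(1+\varepsilon))}.
\]
The $s$-factor is treated by Markov alone, giving $s\sqrt{q_s} \leq \sqrt{M}\,s^{(1-\varepsilon)/2} \leq \sqrt{M}$; multiplying the two bounds and dividing by $\expec[W]$ then produces the required estimate with $C := M^{1/(1+\varepsilon)+1/2}/\expec[W]$ and $\alpha := 1/2 - (\varepsilon-1)(1-\tau)/(2(1+\varepsilon))$.

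To close the argument I would verify $\alpha \in (1/2 - \tau/2,\, 1/2)$: the upper endpoint is strict as long as $(\varepsilon-1)(1-\tau) > 0$, and the lower endpoint rearranges to $\varepsilon(1-2\tau) < 1$, which holds automatically once $\tau \geq 1/2$. Any tolerance $\tau$ close to (but less than) $1$ therefore does the job. The principal obstacle is the interpolation requirement $a \in [0, 1/2]$ together with the need for the $s$-factor to remain uniformly bounded; both effectively ask the tail of $W$ to decay at least as fast as $t^{-2}$, which is precisely where the hypothesis $\expec[W^{1+\varepsilon}]<\infty$ is exploited. If $\varepsilon$ is not large enough to give this decay pointwise through Markov, a stronger tail input (or a refinement of the interpolation that uses the summability of $\sum_t t^{1+\varepsilon}q_t$ beyond its pointwise consequence) is required to absorb the $t$-factor.
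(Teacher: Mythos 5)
Your argument is correct in the regime $\varepsilon>1$, but it does not prove the proposition as stated, which only assumes $\expec[W^{1+\varepsilon}]<\infty$ for \emph{some} $\varepsilon>0$; the delicate case is precisely $\varepsilon\in(0,1]$, and there three of your steps fail simultaneously. First, your interpolation exponent $a=(\varepsilon-1)/(2(1+\varepsilon))$ is negative for $\varepsilon<1$, and then the instability step $q_t^{a}\le n^{-a(1-\tau)}$ reverses direction (for $a<0$ the map $x\mapsto x^{a}$ is decreasing on $(0,1)$). Second, the stable factor is not uniformly bounded: Markov only gives $s\sqrt{q_s}\le\sqrt{M}\,s^{(1-\varepsilon)/2}$, and the stable index $s$ can be as large as roughly $u_n^\uparrow(\tau)$, which grows polynomially in $n$ (Lemma~\ref{lem:no stable}), so this factor grows polynomially in $n$ and cannot be absorbed into the constant $C$ of Assumption~\ref{ass:kernel bound}. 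Third, your $\alpha=1/2-(\varepsilon-1)(1-\tau)/(2(1+\varepsilon))$ is $\ge 1/2$ when $\varepsilon\le 1$, violating the requirement $\alpha<1/2$; at $\varepsilon=1$ your estimate yields only $ts\sqrt{q_tq_s}\le M$, a constant, which cannot be dominated by the decaying right-hand side $C n^{-1/2+\alpha}/1$ of \eqref{eq:kernel bound} written in your multiplicative form. You flag this limitation yourself, but flagging it does not close it: under the stated hypothesis one cannot upgrade $\varepsilon\le 1$ to $\varepsilon>1$, so the proposal leaves the actual content of the proposition unproved.

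The missing ingredient is exactly where the paper's proof diverges from yours: instead of the Markov rate $q_t\le M t^{-(1+\varepsilon)}$, the paper extracts from $\sum_t t^{1+\varepsilon}q_t<\infty$ (together with an eventual-monotonicity step for $(q_t)$) the stronger pointwise decay $q_t< t^{-2-\varepsilon}$ for all $t$ beyond a fixed, $n$-independent threshold. That decay makes the stable factor harmless ($s\sqrt{q_s}\lesssim 1$ for large $s$, with the finitely many small $s$ absorbed into $C$), and it leaves a genuine surplus on the unstable coordinate, $q_t^{-1/2}\ge t\,q_t^{-\varepsilon/(2(2+\varepsilon))}$, so that the factor $n^{\alpha-1/2}$ is beaten by $q_t^{-\varepsilon/(2(2+\varepsilon))}\ge n^{(1-\tau)\varepsilon/(2(2+\varepsilon))}$ once the tolerance is chosen \emph{small}, $\tau<\varepsilon/(2+2\varepsilon)$ — the opposite of your choice $\tau$ close to $1$. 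So the ``refinement of the interpolation that uses the summability beyond its pointwise consequence'' you mention at the end is not an optional strengthening; it is the step the proof hinges on, and without it the proposal only covers the easy case of weights with finite moments of order greater than two.
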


In light of Proposition~\ref{prop:ass CL}, the consequence to Theorem~\ref{thm:IRD to ARD} for the Chung-Lu model will become:

\begin{corollary}[Fixing arcs in the Chung-Lu model]\label{cor:CL}
    Consider the Chung-Lu model with i.i.d. weights drawn from $W$, and assume that $\expec[W^{1 + \varepsilon}] < \infty$ for some $\varepsilon > 0$. Fix an $\alpha > 1/2 - \varepsilon/(4 + 3 \varepsilon)$. Suppose for some monotone event $\mathcal{Q}_n$ there exists a number $p \in [0, 1]$ such that
    \[
    \prob(\IRD_n(W, \kappa'_n) \in \mathcal{Q}_n) \to p,
    \]
    as $n \to \infty$ for all sequences $\kappa'_n$ satisfying
    \[
    \left| \kappa'_n(t, s) - \frac{t s}{\expec[W]} \right| \leq \frac{C n^{-1/2 + \alpha}}{\sqrt{q_t q_s}}.
    \]
    Then, we also have for $\Lambda_n(t, s) = \lfloor t s q_t q_s n / \expec[W] \rfloor$ (when $t$ and $s$ are stable; $\Lambda_n(t, s) = 0$ otherwise) that
    \[
    \prob(\ARD_n(W, \Lambda_n) \in \mathcal{Q}_n) \to p.
    \]
\end{corollary}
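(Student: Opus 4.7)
The plan is to derive this corollary as a direct consequence of Theorem~\ref{thm:IRD to ARD}, using the representation of Chung-Lu as an $\IRD$ worked out in Section~\ref{sec:CL} together with the assumption-verification supplied by Proposition~\ref{prop:ass CL}. Section~\ref{sec:CL} already identifies the Chung-Lu model with $\IRD_n(W, \kappa_n)$ for kernel $\kappa(t,s) = ts/\expec[W]$ and perturbation $\varphi_n(t,s) = \expec[W]/\overline{W} - 1$, so I only need to check that the two assumptions of Theorem~\ref{thm:IRD to ARD} are in force with parameters matching the statement, and then transcribe the conclusion.

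First I would verify Assumption~\ref{ass:type distr}: the hypothesis $\expec[W^{1+\varepsilon}] < \infty$ gives $\expec[T^\delta] < \infty$ with $\delta = 1+\varepsilon > 0$, after the harmless relabeling of the discrete support of $W$ to a subset of $\mathbb{N}$ permitted by the remark following Assumption~\ref{ass:type distr}. Next, Proposition~\ref{prop:ass CL} produces a tolerance $\tau \in (0,1)$ for which Assumption~\ref{ass:kernel bound} holds; the numerical form of the statement dictates choosing $\tau = 2\varepsilon/(4+3\varepsilon)$, which gives $1/2 - \tau/2 = 1/2 - \varepsilon/(4+3\varepsilon)$. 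Any $\alpha$ satisfying the hypothesis $\alpha > 1/2 - \varepsilon/(4+3\varepsilon)$ (and $\alpha < 1/2$, as required by Assumption~\ref{ass:kernel bound}) then lies in the admissible interval $(1/2 - \tau/2, 1/2)$.

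With both assumptions in place, Theorem~\ref{thm:IRD to ARD} applies verbatim. The theorem's arc-count function $\Lambda_n(t,s) = \lfloor \kappa(t,s) q_t q_s n \rfloor$ for stable $t,s$ (and $0$ otherwise) is, under the identification $\kappa(t,s) = ts/\expec[W]$, exactly the function $\lfloor tsq_tq_s n/\expec[W] \rfloor$ appearing in the statement. The convergence hypothesis on $\IRD_n(W, \kappa'_n)$ for all sequences satisfying \eqref{eq:kappa seq bound} with the Chung-Lu kernel is precisely what is assumed, so Theorem~\ref{thm:IRD to ARD} delivers $\prob(\ARD_n(W, \Lambda_n) \in \mathcal{Q}_n) \to p$.

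The only non-routine step is calibrating the constant $\tau$ produced by Proposition~\ref{prop:ass CL} so that the interval $(1/2-\tau/2, 1/2)$ matches the precise lower bound $1/2 - \varepsilon/(4+3\varepsilon)$ in the statement; this is where the main obstacle hides. Heuristically, an unstable type $t$ satisfies $q_t < n^{-1+\tau}$ and, through Markov applied to $\expec[W^{1+\varepsilon}]$, corresponds roughly to $t \gtrsim n^{(1-\tau)/(1+\varepsilon)}$. Substituting the worst-case unstable pair into the required inequality $ts\sqrt{q_tq_s} \leq Cn^{-1/2+\alpha}$ and balancing the resulting tradeoff between $\tau$ and $\alpha$ is what pins down the value $\tau = 2\varepsilon/(4+3\varepsilon)$ used above; the careful version of this calculation is the content of Proposition~\ref{prop:ass CL}, whose proof I would defer to the appendix.
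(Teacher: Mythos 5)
Your overall route is the one the paper intends (it gives no separate proof of Corollary~\ref{cor:CL}): identify Chung--Lu as $\IRD_n(W,\kappa_n)$ with $\kappa(t,s)=ts/\expec[W]$, note Assumption~\ref{ass:type distr} holds with $\delta=1+\varepsilon$, use Proposition~\ref{prop:ass CL} to supply Assumption~\ref{ass:kernel bound}, and then quote Theorem~\ref{thm:IRD to ARD}. The structure is fine; the problem is the calibration step you explicitly defer.

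Proposition~\ref{prop:ass CL} only asserts the existence of \emph{some} tolerance, and its proof establishes Assumption~\ref{ass:kernel bound} only for $\tau < \varepsilon/(2+2\varepsilon) = 2\varepsilon/(4+4\varepsilon)$, which is strictly smaller than your choice $\tau = 2\varepsilon/(4+3\varepsilon)$; so the proposition does not ``produce'' the tolerance you need. Your balancing heuristic, carried out with the correct exponents, does not pin down $2\varepsilon/(4+3\varepsilon)$ either: the usable decay is $q_t \lesssim t^{-2-\varepsilon}$ (not a Markov bound on its own), the smallest unstable type is of order $n^{(1-\tau)/(2+\varepsilon)}$ (Lemma~\ref{lem:no stable} with $\delta=1+\varepsilon$; your exponent $(1-\tau)/(1+\varepsilon)$ is off), and the worst pair (unstable $t$, $s=1$) gives $ts\sqrt{q_tq_s}\asymp n^{-(1-\tau)\varepsilon/(2(2+\varepsilon))}$, so \eqref{eq:kernel bound} at tolerance $\tau$ essentially forces $\alpha \geq 1/2-(1-\tau)\varepsilon/(2(2+\varepsilon))$ for weight laws saturating the moment condition. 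Balancing this against $\alpha>1/2-\tau/2$ yields the optimum $\tau=\varepsilon/(2+2\varepsilon)$ and threshold $1/2-\varepsilon/(4+4\varepsilon)$, not $1/2-\varepsilon/(4+3\varepsilon)$. Concretely, for $q_t\asymp t^{-2-\varepsilon}(\log t)^{-2}$ (which satisfies $\expec[W^{1+\varepsilon}]<\infty$) and any $\alpha$ in the nonempty window between $1/2-\varepsilon/(4+3\varepsilon)$ and $1/2-\varepsilon/(4+4\varepsilon)$, there is no tolerance for which both \eqref{eq:kernel bound} and $\alpha\in(1/2-\tau/2,1/2)$ hold — in particular the kernel bound fails at your $\tau=2\varepsilon/(4+3\varepsilon)$ for large $n$ — so Theorem~\ref{thm:IRD to ARD} cannot be invoked for those $\alpha$. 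This constant-level mismatch appears to originate in the paper itself (its Proposition~\ref{prop:ass CL} proof supports only the $4+4\varepsilon$ threshold), but your write-up asserts the stronger calibration without justification; you would need either to prove Assumption~\ref{ass:kernel bound} directly at $\tau=2\varepsilon/(4+3\varepsilon)$ (which fails in general, as above) or to prove the corollary with the threshold $1/2-\varepsilon/(4+4\varepsilon)$ and flag the discrepancy.
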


\begin{remark}
    The Chung-Lu model we considered in this section is directed, because the $\IRD$ and $\ARD$ models are directed. However, due to symmetry of the kernel $\kappa_n$, the model of this section is closely related to the undirected Chung-Lu model. To find a ``true'' generalisation of the Chung-Lu model in the directed case, we would need to specify two weights per vertex. One of these is used for the possible out-arc, while the other is used for the possible in-arc. In this setting, the state space would become $\mathcal{S} = \mathbb{N}^2$. Despite notational inconveniences, results equivalent to Proposition~\ref{prop:ass CL} and Corollary~\ref{cor:CL} would still apply in this further generalisation.  
\end{remark}

\subsection{A model for cell-cell interactions}
Section~\ref{sec:classical models} showcased some direct consequences to Theorem~\ref{thm:IRD to ARD}. Of course, many real-life network models will not fall into the category of $\ARD$. Thus, this section will show how the main result can be adapted to fit an applied setting that goes beyond the $\ARD$ class. Specifically, we will show equivalence with the model in \cite{vanSantvoort2023MathematicallyMicroenvironment} to infer cell-cell interaction networks, and use this equivalence to study the existence of the giant strongly connected component.

\subsubsection{Model definition and its relation to the biological context}
When tracking the state and severity of diseases, it is important to infer how cells communicate with one another (see e.g.  \cite{Armingol2021DecipheringExpression}). There are many different strategies to accomplish this feat. Some look at a network of cell-types and infers how it evolves over time (like \cite{Wang2019CellTranscriptomics}), while others focus more on the proteins involved in the communication (like \cite{Choi2015TranscriptomeModel}). A recent model presented in \cite{vanSantvoort2023MathematicallyMicroenvironment} takes yet another approach, and infers cellular communication through a directed random graph model where vertices represent cells, vertex-types represent cell-types, and arcs represent protein pairs (ligands and receptors).

Mathematically, the model needs the following inputs:
\begin{enumerate}[label = (\arabic*)]
    \item A vertex-type distribution $T$ supported on some discrete set $\mathcal{S}$.
    \item A colour-distribution $C = (C^{\text{out}}, C^{\text{in}})$ for the arcs supported on some discrete set $\mathcal{C}^{\text{out}} \times \mathcal{C}^{\text{in}}$.
    \item An indicator function $I : \mathcal{S} \times \mathcal{C}^{\text{out}} \to \{0, 1\}$.
    \item An indicator function $J : \mathcal{S} \times \mathcal{C}^{\text{in}} \to \{0, 1\}$.
\end{enumerate}
In \cite{Zaitsev2022PreciseTranscriptomes, Dimitrov2022ComparisonData} it turns out that these four pieces of input are relatively easy to extract from patients in practice. Biologically, (1) measures how often certain cell-types are present in a tissue, (2) measures how often certain protein pairs are present in a tissue, and finally (3)-(4) indicate which proteins ``could belong to'' a given cell-type.

Using these mathematical objects, we generate a realisation of the model with vertex set $[n]$, containing $\lfloor \mu n \rfloor$ arcs for some $\mu > 0$, with the following algorithm.
\begin{enumerate}
    \item For each vertex $v \in [n]$, assign it a type $T_v$ drawn from $T$ independently of the other vertices.
    \item For each arc number $a \in [\lfloor \mu n \rfloor]$, sample an arc-colour pair $C_a = (C^{\text{out}}_a, C^{\text{in}}_a)$ drawn from $C$ independently of the other arc-colour pairs.
    \item For each arc number $a \in [\lfloor \mu n \rfloor]$ choose one vertex $v$ uniformly from the set $\left\{ v \in [n] : I(T_v, C^\text{out}_a) = 1 \right\}$.
    \item Then, independently from Step 3, choose one vertex $w$ uniformly from the set $\left\{ w \in [n] : J(T_w, C^\text{out}_a) = 1 \right\}$.
    \item Add arc $(v, w)$ to the directed graph.
\end{enumerate}

We denote the realisation of the model by $\texttt{CCI}_{n, \mu}(T, C, I, J)$. Although $\CCI$ might seem to fall in the $\ARD$ class, it is completely distinct from it. Note in $\CCI$ it is a priory unclear between which vertex-types a given arc will be drawn. Moreover, even if we were to reveal the arc-colours, then still by virtue of $I$ and $J$ it is possible that an arc is drawn between multiple combinations of vertex-types, while in $\ARD$ this would be impossible. Thus, it is impossible to directly apply Theorem~\ref{thm:IRD to ARD} to relate $\CCI$ to $\IRD$.

\subsubsection{Identifying the model as an inhomogeneous random digraphs}\label{sec:CCI is IRD statement}
To link $\CCI$ with $\IRD$, we follow Assumption~\ref{ass:type distr} and require that $\mathcal{S}, \mathcal{C}^{\text{out}}, \mathcal{C}^{\text{in}} \subseteq \mathbb{N}$. Biologically, this assumption is not far-fetched, since in reality the number of protein-types and cell-types in your body is finite. To formulate the kernel in $\IRD$ belonging to $\CCI$ we first set $p_{ij} := \prob(C^{\text{out}} = i, C^{\text{in}} = j)$ and recall that $q_k = \prob(T = k)$. We define
\begin{subequations}
\begin{equation}
    \lambda_i = \sum_{k = 1}^\infty q_k I(k, i),\label{eq:fraction of vertices connecting to lig}
\end{equation}
\begin{equation}
    \varrho_j = \sum_{k = 1}^\infty q_k J(k, j).\label{eq:fraction of vertices connecting to rec}
\end{equation}
\end{subequations}
Note that $\lambda_i$ can be interpreted as the (asymptotic) proportion of vertices that an arc with out-colour $i$ can connect to. Biologically, it is the fraction of cells that can secrete ligand-type $i$. Similarly, $\varrho_j$ can be interpreted as the fraction of vertices that an arc with in-colour $j$ can connect to. It is the fraction of cells that can express receptor-type $j$. Under these definitions, we will show that the $\IRD$-kernel belonging to $\CCI$ for two fixed vertex-types $t, s \in \mathcal{S}$ is given by
\begin{equation}
    \kappa(t, s) =\mu \sum_{i = 1}^\infty \sum_{j = 1}^\infty \frac{p_{ij} \cdot I(t, i) J(s, j)}{\lambda_i \cdot \varrho_j}. \label{eq:asymp connection number}
\end{equation}
By rewriting \eqref{eq:asymp connection number} as
\[
\frac{\kappa(t, s)}{n} = \sum_{i = 1}^\infty \sum_{j = 1}^\infty \frac{n \mu p_{ij} \cdot I(t, i) J(s, j)}{n \lambda_i \cdot n \varrho_j},
\]
we can interpret $\kappa(t, s)/n$ as the expected number of arcs that connect a specific vertex of type $t$ to another specific vertex of type $s$. This is, because $n \mu p_{ij}$ is the expected number of arcs with colour $(i, j)$, $\lambda_i n$ is the expected number of vertices that can accept out-colour $i$, and $\varrho_j n$ is the expected number of vertices that can accept in-colour $j$. Hence, for a given arc with colour $(i, j)$ the probability of it being placed between two specific vertices with type $t$ and $s$, respectively, is $1 / (n^2 \lambda_i \varrho_j)$ due to Steps 3--5 of the $\CCI$-generation algorithm. Since there are $n \mu p_{ij}$ arcs with colour $(i, j)$ that will be placed, we conclude $(n \mu p_{ij})/(n^2 \lambda_i \varrho_j)$ shall be placed between the two fixed vertices. Summing over all possible arc-colours yields the expression of $\kappa(t, s) / n$. The indicators ensure arc-colours only contribute if vertex-types $t$ and $s$ can actually accept them. 

To prove a connection between $\IRD$ and $\CCI$, we will make a chain of two links. One link from $\CCI$ to $\ARD$, and another from $\ARD$ to $\IRD$. The second link is facilitated by Theorem~\ref{thm:IRD to ARD}, the first will require a new proof. To make the first link formal, some assumptions on $\CCI$ are required.

\begin{assumption}[\CCI~assumptions]\label{ass:CCI}
    For $\texttt{CCI}_{n, \mu}(T, C, I, J)$ we assume the following:
    \begin{enumerate}[label = \Roman*.]\itemsep0em
        \item $\expec[T^{1 + \varepsilon}] < \infty$ for some $\varepsilon > 0$.
        \item $\inf_{i}\{\lambda_i : \lambda_i > 0\} > 0$ and $\inf_{j}\{\varrho_j : \varrho_j > 0\} > 0$.
    \end{enumerate}
\end{assumption}

The first assumption ensures that there are not too many unstable vertices in $\CCI$ (cf. Definition~\ref{def:stable vertices}). The second implicitly stipulates that there cannot be any arc-colours that occur with a relatively large probability, but connect to relatively few vertices. Note the second assumption is satisfied when e.g. there are only a finite number of protein types. 

We also need to make a technical assumption on $\mathcal{Q}_n$. Since Theorem~\ref{thm:IRD to ARD} sets $\Lambda_n(t, s) = 0$ if $t$ or $s$ is unstable, we need to ensure $\CCI$ probabilities do not change when arcs from and to unstable vertices are disregarded. 

\begin{assumption}[Arcs to and from unstable vertices influence nothing]\label{ass:CCI unstable vertices}
    Denote by $\texttt{CCI}^-_{n, \mu}(T, C, I, J)$ the cell-cell interaction model after removing all arcs to and from unstable vertices at some tolerance $\tau \in (0 , 1)$. We assume the event $\mathcal{Q}_n$ is such that
    \[
    \prob(\CCI_{n, \mu}(T, C, I, J) \in \mathcal{Q}_n) = \prob(\CCI_{n, \mu}^-(T, C, I, J) \in \mathcal{Q}_n) + o(1)
    \]
\end{assumption}

Note that in all biologically relevant cases Assumption~\ref{ass:CCI unstable vertices} is automatically satisfied. The number of cell-types is finite, so unstable vertex-type will not exists. However, we will show in the case of the giant strongly connected component that one can validate Assumption~\ref{ass:CCI unstable vertices} even for an infinite number of vertex-types. We can now formulate the main result of this section.

\begin{theorem}[$\CCI$ to $\IRD$]
    \label{thm:CCI to IRD}
    Consider $\texttt{CCI}_{n, \mu}(T, C, I, J)$ such that Assumption~\ref{ass:CCI} is satisfied, and let $\kappa$ be as in \eqref{eq:asymp connection number}. Define a constant $\alpha$ such that
    \begin{equation}\label{eq:alphachoice}
    \frac38 <  \alpha < \frac12. 
    \end{equation}
    Suppose $\mathcal{Q}_n$ is a monotone event that satisfies Assumption~\ref{ass:CCI unstable vertices} for some tolerance $\tau > 1 - 2\alpha$. If there exists a number $p \in [0, 1]$ such that
    \[
    \prob(\IRD_n(T, \kappa'_n) \in \mathcal{Q}_n) \to p,
    \]
    as $n \to \infty$ for all sequences $\kappa'_n $ that satisfy the inequality
\begin{equation}\label{eq:kern sequence CCI}
|\kappa'_n(t, s) - \kappa(t, s) | \leq \frac{3 n^{-1/2 + \alpha}}{\sqrt{q_t q_s}}.
\end{equation}
Then, we also have that
\[
\prob(\CCI_{n, \mu}(T, C, I, J) \in \mathcal{Q}_n) \to p.
\]
\end{theorem}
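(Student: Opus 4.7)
The plan is to build the chain $\CCI \to \ARD \to \IRD$, invoking Theorem~\ref{thm:IRD to ARD} for the second link; all of the new work lies in the first link. The tolerance constant $3$ in \eqref{eq:kern sequence CCI}, in place of a generic $C$ as in \eqref{eq:kappa seq bound}, is precisely what is needed to absorb the additional fluctuations that the $\CCI$ construction layers on top of those already handled by Theorem~\ref{thm:IRD to ARD}.

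First, by Assumption~\ref{ass:CCI unstable vertices} we may work with $\CCI^-$, the model obtained by discarding every arc incident to an unstable vertex at tolerance $\tau$. Condition on the realisations of the vertex types $\{T_v\}_{v \in [n]}$ and of the arc colours $\{(C_a^{\text{out}}, C_a^{\text{in}})\}_{a \in [\lfloor \mu n \rfloor]}$. Under this conditioning the arcs are placed independently, and the number of arcs assigned between stable vertex-types $(t, s)$ is a sum of independent Bernoulli indicators whose conditional mean is
\[
\lfloor \mu n \rfloor \sum_{i, j} p_{ij} \cdot \frac{N_t \, I(t, i)}{\sum_k N_k \, I(k, i)} \cdot \frac{N_s \, J(s, j)}{\sum_k N_k \, J(k, j)}.
\]
Applying a concentration estimate for $N_t$ (of the type provided by Lemma~\ref{lem:well-concentrated types}) together with Assumption~\ref{ass:CCI}.I for the numerators, and Assumption~\ref{ass:CCI}.II to keep the denominators bounded away from zero, a Chernoff/Hoeffding argument shows that this random arc count agrees with $\kappa(t, s) q_t q_s n$ up to an additive error of order $n^{1/2 + \alpha} \sqrt{q_t q_s}$ with probability $1 - o(1)$. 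The lower bound $\alpha > 3/8$, equivalent to $\tau > 1/4$, is exactly what is required to make these errors summable uniformly over stable type pairs.

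Given the resulting random arc counts $\Lambda_n^{\CCI}$, the conditional law of $\CCI^-$ differs from $\ARD_n(T, \Lambda_n^{\CCI})$ only in that $\CCI^-$ samples vertex pairs with replacement and admits self-loops. In the sparse regime, with $O(n)$ arcs placed inside sets of size $\Theta(n^2)$, both phenomena contribute $o(1)$ arcs in expectation across stable types, so the two conditional laws can be coupled to coincide with probability $1 - o(1)$. Writing $\Lambda^{\pm}_n(t, s) = \lfloor \kappa(t, s) q_t q_s n \rfloor \pm \lceil n^{1/2 + \alpha} \sqrt{q_t q_s} \rceil$ and exploiting monotonicity of $\mathcal{Q}_n$ yields the sandwich
\[
\prob(\ARD_n(T, \Lambda^{-}_n) \in \mathcal{Q}_n) - o(1) \le \prob(\CCI^-_{n, \mu} \in \mathcal{Q}_n) \le \prob(\ARD_n(T, \Lambda^{+}_n) \in \mathcal{Q}_n) + o(1).
\]
Each of $\Lambda^{\pm}_n$ corresponds, under Theorem~\ref{thm:IRD to ARD}, to an $\IRD$ kernel perturbed by at most $2 n^{-1/2 + \alpha}/\sqrt{q_t q_s}$, which fits strictly inside the tolerance $3 n^{-1/2 + \alpha}/\sqrt{q_t q_s}$ of \eqref{eq:kern sequence CCI}, so both bounds converge to $p$ by hypothesis.

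The principal difficulty will be tracking three coupled sources of fluctuation (the type counts $N_t$, the arc-colour counts, and the conditional Bernoulli placement) so that the joint deviation of the arc count between stable types stays within $n^{1/2 + \alpha} \sqrt{q_t q_s}$ uniformly in $t, s$. The restriction to stable types ensures $q_t q_s n = \Omega(n^{2\tau - 1})$, which for $\tau > 1/4$ (equivalently $\alpha > 3/8$) is just large enough to make the concentration succeed and to push the remaining errors inside the tolerance of \eqref{eq:kern sequence CCI}.
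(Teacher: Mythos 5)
Your overall architecture matches the paper's: pass to $\CCI^-$ via Assumption~\ref{ass:CCI unstable vertices}, show the per-type-pair arc counts concentrate in a window of size $n^{1/2+\alpha}\sqrt{q_t q_s}$ around $\lfloor\kappa(t,s)q_tq_sn\rfloor$, condition on the counts to land in $\ARD$, sandwich with $\Lambda_n^{\pm}$ using monotonicity, and feed kernels perturbed by at most $2n^{-1/2+\alpha}/\sqrt{q_tq_s}$ (hence within the tolerance $3$ of \eqref{eq:kern sequence CCI}) into Theorem~\ref{thm:IRD to ARD}. However, there is a genuine gap in how you dispose of self-loops and multi-arcs. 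With $\lfloor\mu n\rfloor$ arcs each landing on a given ordered vertex pair with probability of order $1/(\lambda_i\varrho_j n^2)$, the expected number of self-loops and of repeated arcs is $\Theta(1)$, not $o(1)$: the probability that a single arc is a self-loop is of order $1/n$, and there are order $n^2$ pairs of arcs each colliding with probability of order $1/n^2$. Consequently your claim that the conditional law of $\CCI^-$ and that of $\ARD_n(T,\Lambda_n^{\CCI})$ ``can be coupled to coincide with probability $1-o(1)$'' is false; with probability bounded away from zero the realisations differ. What is actually true, and what the paper proves, is weaker but sufficient: the number of such defects per type pair is small compared to the deviation window, so removing them only shifts the counts negligibly (Lemmas~\ref{lem:self-loops} and \ref{lem:multi-arcs}, absorbed in Steps I--II of the paper's proof). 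That absorption is precisely where the constraint $\alpha>3/8$ enters (via the super-stable tolerance $\tau^+$ in \eqref{eq:condition tau}, needed so that $n^{1/2+\alpha}\sqrt{q_tq_s}$ dominates the defect bounds $n^{1-\nu}$), so your accounting that $3/8$ is ``exactly what is required to make these errors summable uniformly over stable type pairs'' misidentifies the bottleneck: the individual failure probabilities are stretched-exponential while the number of stable types is only polynomial, so summability is never the issue.

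Two further points you should not skip when invoking Theorem~\ref{thm:IRD to ARD}: its hypotheses include Assumption~\ref{ass:kernel bound} on $\kappa$ at unstable types and the requirement $\alpha\in(1/2-\tau/2,1/2)$; in the paper both are verified using the boundedness of $\kappa$ (Lemma~\ref{lem:kappa bounded}, a consequence of Assumption~\ref{ass:CCI}.II) together with $\tau>1-2\alpha$. Also note that the paper's concentration lemma (Lemma~\ref{lem:deviation A double hat}) does not deliver two-sided concentration uniformly over \emph{all} stable pairs by a single Chernoff bound as your sketch asserts: the fluctuation of the denominators and numerators in the per-arc placement probability produces a mean shift that, with the additive error bound the paper derives, exceeds the window for barely stable types, which is why the proof splits into very common type pairs (two-sided bound) and rarer stable pairs (where only the upper tail is needed because the window already exceeds the target count). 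Your route could be repaired by a sharper, multiplicative error analysis plus a crude whp bound (e.g.\ Markov) on the total defect count, but as written both the coupling step and the stated source of the $\alpha>3/8$ condition are incorrect.
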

\begin{remark}
    The condition \eqref{eq:alphachoice} might seem a bit arbitrary. In the proof of Theorem~\ref{thm:CCI to IRD}, we will see that this requirement on $\alpha$ ensures there is a stability tolerance $\tau$ (cf. Definition~\ref{def:stable vertices}) for which the link between $\CCI$ and $\ARD$ can be made. We stress that \eqref{eq:alphachoice} is sufficient and might not be necessary. Moreover, note we have taken $C = 3$ in \eqref{eq:kern sequence CCI} when compared to \eqref{eq:kappa seq bound}. We will also see this is sufficient and not necessary.
\end{remark}

\begin{remark}
    As we will see in the proof of Theorem~\ref{thm:CCI to IRD}, condition II of Assumption~\ref{ass:CCI} is not strictly necessary. It can be replaced with an explicit assumption on the distribution of $C$ together with some global growth restriction on $\kappa(t, s)$ as $t ,s \to \infty$. However, to make the proofs less technical, we opted to consider only condition II. In biological contexts, the number of protein- and cell-types is finite, so that Assumption~\ref{ass:CCI} is automatically satisfied. 
\end{remark}

\subsubsection{Using the main result to find the size of its giant strongly connected component}
One of the current mathematical downsides of the model in \cite{vanSantvoort2023MathematicallyMicroenvironment}, is its heavy reliance on Monte-Carlo simulations to attain results. Theorem~\ref{thm:CCI to IRD} can help here, since it shows that monotone properties of $\IRD$ can be translated to $\CCI$. Thus, since $\IRD$ is already well studied in e.g. \cite{Cao2019OnDigraphs}, we can use existing literature to quickly compute asymptotic properties of $\CCI$ without the need for any Monte-Carlo simulation. We will show this idea by computing the asymptotic size of the largest strongly connected component (SCC) in $\CCI$. We will start by defining what SCC.

\begin{definition}[SCC]\label{def:SCC}
    A \emph{strongly connected component} of a directed graph $G = ([n], E)$ is a subset of vertices $V \subseteq [n]$ such that for every pair $v, w \in V$ there exists a path from $v$ to $w$ and back over the arcs in $E$. Specifically, we disregard vertex-types.
\end{definition}

We note in \cite{Cao2019OnDigraphs} (Theorem 3.9) that an expression for the asymptotic size of the largest strongly connect component already exists for $\IRD$. Hence, we seek to apply Theorem~\ref{thm:CCI to IRD} to translate this result into the language of $\CCI$. To do this, we first need to show that all sequences of kernels $\kappa'_n$ that adhere to \eqref{eq:kern sequence CCI} satisfy some regularity conditions (Assumption 3.1 in \cite{Cao2019OnDigraphs}). The proof will be given in Section~\ref{app:extra proofs}.

\begin{prop}[Regularity]\label{prop:IRD regularity inf}
    Let $\kappa'_n$ be a sequence adhering to \eqref{eq:kern sequence CCI}. Define in $\IRD_n(T, \kappa'_n)$ the conditional probability measure $\prob_n(\cdot) = \prob(\cdot \;|\; (T_v)_{v \in [n]})$. Then, the following four conditions are satisfied:
    \begin{enumerate}[label = (\alph*)]
        \item There exists a Borel probability measure $\nu$ on $\mathbb{N}$ such that for all $V \subseteq \mathbb{N}$ we have in probability under $\prob$ that
        \[
        \frac1n \sum_{v = 1}^n \1\{T_v \in V\} \to \nu(A).
        \]
        \item $\kappa$ is continuous and non-negative almost everywhere on $\mathbb{N}^2$.
        \item $\varphi_n(t, s) := (\kappa'_n(t, s) - \kappa(t, s))/\kappa(t, s)$ is continuous on $\mathbb{N}^2$ and converges to zero $\prob_n$-a.s. for any $t, s \in \mathbb{N}$.
        \item For the following limits we have that
        \[
        \lim_{n \to \infty } \frac1{n^2} \expec\left[ \sum_{v = 1}^n \sum_{w = 1}^n \kappa(T_v, T_w) \right] = \lim_{n \to \infty} \frac1{n^2} \expec\left[ \sum_{v = 1}^n \sum_{w \neq v} \kappa'_n(T_v, T_w) \right] = \sum_{t = 1}^\infty \sum_{s = 1}^\infty \kappa(t, s) q_t q_s < \infty.
        \]
    \end{enumerate}
\end{prop}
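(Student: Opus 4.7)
The plan is to verify the four conditions in turn, most of which collapse quickly thanks to the discreteness of $\mathbb{N}^2$ and the explicit form of $\kappa$ in \eqref{eq:asymp connection number}. For (a), since $(T_v)_{v \in [n]}$ are i.i.d.\ copies of $T$, the strong law of large numbers applied to the indicators $\1\{T_v \in V\}$ gives $\frac{1}{n}\sum_v \1\{T_v \in V\} \to \prob(T \in V)$ almost surely for every $V \subseteq \mathbb{N}$, so I would take $\nu$ to be the law of $T$ (i.e.\ $\nu(\{t\}) = q_t$). Conditions (b) and (c) inherit continuity for free from the discrete topology on $\mathbb{N}^2$; non-negativity in (b) is immediate from the non-negativity of every factor in \eqref{eq:asymp connection number}, and in (c), for any fixed $(t,s)$ with $\kappa(t,s)>0$ the bound \eqref{eq:kern sequence CCI} together with $\alpha<1/2$ gives $|\varphi_n(t,s)| \leq 3 n^{-1/2+\alpha}/(\kappa(t,s)\sqrt{q_t q_s}) \to 0$ deterministically (hence $\prob_n$-a.s.).

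The substance lies in (d). I would first identify the target limit in closed form by substituting \eqref{eq:asymp connection number} and swapping the order of summation:
\[
\sum_{t,s} \kappa(t,s) q_t q_s \;=\; \mu \sum_{i,j} \frac{p_{ij}}{\lambda_i \varrho_j} \Big(\sum_t q_t I(t,i)\Big)\Big(\sum_s q_s J(s,j)\Big) \;=\; \mu \sum_{i,j} p_{ij} \;=\; \mu,
\]
using \eqref{eq:fraction of vertices connecting to lig}--\eqref{eq:fraction of vertices connecting to rec} (and interpreting terms with $\lambda_i=0$ or $\varrho_j=0$ as zero, which is consistent since then $I(\cdot,i)\equiv 0$ or $J(\cdot,j)\equiv 0$ on $\supp T$). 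Writing $T, T'$ for independent copies of $T$, the first expectation in (d) factors as $\tfrac{n-1}{n}\expec[\kappa(T,T')] + \tfrac{1}{n}\expec[\kappa(T,T)]$; the diagonal is $O(1/n)$ because $\kappa$ is bounded (by Assumption~\ref{ass:CCI}~II, $\lambda^* := \inf\{\lambda_i:\lambda_i>0\}>0$ and likewise $\varrho^* > 0$, so $\|\kappa\|_\infty \leq \mu/(\lambda^*\varrho^*)$), while $\expec[\kappa(T,T')] = \mu$ by the identity above.

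The one genuinely delicate step is the analogous limit $\expec[\kappa'_n(T,T')] \to \mu$. By \eqref{eq:kern sequence CCI},
\[
\big|\expec[\kappa'_n(T,T') - \kappa(T,T')]\big| \;\leq\; 3\, n^{-1/2+\alpha} \Big(\sum_t \sqrt{q_t}\Big)^{\!2},
\]
so the main obstacle reduces to proving $\sum_t \sqrt{q_t} < \infty$ using only $\expec[T^{1+\varepsilon}]<\infty$ (Assumption~\ref{ass:CCI}~I) for a possibly tiny $\varepsilon>0$. A direct Markov bound $q_t \leq \expec[T^{1+\varepsilon}]/t^{1+\varepsilon}$ would require $\varepsilon>1$, so I would instead apply Cauchy-Schwarz to the factorisation $\sqrt{q_t} = \sqrt{q_t\, t^{1+\varepsilon}}\cdot t^{-(1+\varepsilon)/2}$, obtaining
\[
\sum_t \sqrt{q_t} \;\leq\; \Big(\sum_t q_t\, t^{1+\varepsilon}\Big)^{\!1/2}\Big(\sum_t t^{-(1+\varepsilon)}\Big)^{\!1/2} \;=\; \sqrt{\expec[T^{1+\varepsilon}]\,\zeta(1+\varepsilon)} \;<\; \infty.
\]
Since $\alpha < 1/2$, the discrepancy is $O(n^{-1/2+\alpha}) \to 0$, closing the argument and yielding that both limits in (d) coincide with the finite value $\mu$.
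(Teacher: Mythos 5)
Your proposal is correct and follows essentially the same route as the paper: the same choice $\nu(\{t\}) = q_t$ with the law of large numbers for (a), continuity for free from discreteness in (b)--(c) with the deterministic bound $|\varphi_n(t,s)| \leq 3 n^{-1/2+\alpha}/(\kappa(t,s)\sqrt{q_t q_s}) \to 0$, and for (d) the same Tonelli swap giving $\sum_{t,s}\kappa(t,s)q_t q_s = \mu$, the same diagonal/off-diagonal split with boundedness of $\kappa$ (Lemma~\ref{lem:kappa bounded}) killing the $O(1/n)$ diagonal, and the same reduction of the middle limit to $\sum_t \sqrt{q_t} < \infty$. The only genuine divergence is how that summability is obtained: the paper asserts the pointwise tail bound $q_t < t^{-2-\varepsilon}$ for large $t$ (which does not follow from $\expec[T^{1+\varepsilon}]<\infty$ alone without further regularity of $(q_t)$, e.g.\ it can fail along sparse subsequences), whereas your Cauchy--Schwarz step $\sum_t \sqrt{q_t} \leq \sqrt{\expec[T^{1+\varepsilon}]\sum_t t^{-(1+\varepsilon)}}$ deduces it directly and rigorously from Assumption~\ref{ass:CCI}~I, so your version of that step is in fact the more airtight one; the only nitpick is that, like the paper, you should record the convention $\varphi_n(t,s)=0$ when $\kappa(t,s)=0$ so that (c) is well-defined there.
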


Proposition~\ref{prop:IRD regularity inf} shows that Theorem 3.9 of \cite{Cao2019OnDigraphs} can be adapted to $\CCI$. Theorem 3.9 additionally assumes that $\kappa$ should be irreducible (cf. Definition 3.7 in \cite{Cao2019OnDigraphs}). Translated into the language of $\CCI$ this can be formulated as follows.

\begin{assumption}[Irreducibility]\label{ass:irreducibility}
    Let $\widehat{G} = (\mathbb{N}, \widehat{E})$ be a directed graph with arc-set \[\widehat{E} := \{(t, s) \in \mathbb{N}^2 : \kappa(t, s) > 0\}.\]
    We assume that $\widehat{G}$ is strongly connected.
\end{assumption}

We can now compute the asymptotic size of the largest strongly connected component in $\CCI$. Since it is not a direct consequence of Theorem~\ref{thm:CCI to IRD}, we will provide its proof in Section~\ref{app:extra proofs}.

\begin{prop}[Largest SCC in \CCI] \label{prop:GSCC}
    Let $|\mathcal{C}_{\max}|$ be the size of the largest strongly connected component in $\CCI$, and assume $\CCI$ satisfies Assumption~\ref{ass:irreducibility}. Denote by $\pi^-_x$ the largest fixed points to the system of equations
    \begin{equation}\label{eq:pi+}
    1 - \pi^-_x(\kappa) = \exp\left\{- \sum_{t = 1}^\infty \kappa(x, t) q_t \pi^-_x(\kappa) \right\}, \qquad x \in \mathbb{N},
    \end{equation}
    and by $\pi^+_x$ the largest fixed points to the system of equations
    \begin{equation}\label{eq:pi-}
    1 - \pi^+_x(\kappa) = \exp\left\{- \sum_{t = 1}^\infty \kappa(t, x) q_t \pi^+_x(\kappa) \right\}, \qquad x \in \mathbb{N}.
    \end{equation}
    Then, we have that $|\mathcal{C}_{\max}|/n \to \alpha$ in probability, where
    \[
    \alpha = \sum_{x = 1}^\infty \pi^+_x \pi_x^- q_x.
    \]
\end{prop}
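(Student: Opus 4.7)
The plan is to obtain Proposition~\ref{prop:GSCC} as a corollary of Theorem~\ref{thm:CCI to IRD} applied to the largest-SCC result for $\IRD$ proved in Theorem~3.9 of \cite{Cao2019OnDigraphs}. The regularity hypotheses needed to invoke that theorem are already supplied by Proposition~\ref{prop:IRD regularity inf}, and Assumption~\ref{ass:irreducibility} translates directly to the irreducibility hypothesis of Theorem~3.9. It therefore suffices to (i) phrase the convergence $|\mathcal{C}_{\max}|/n \to \alpha$ through a family of monotone events, (ii) verify that the hypothesis on $\IRD$-probabilities in Theorem~\ref{thm:CCI to IRD} holds for those events, and (iii) check Assumption~\ref{ass:CCI unstable vertices} for them.

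For step (i), observe that whenever $G_1 \subseteq G_2$ in the sense of Definition~\ref{def:monotone events}, every SCC of $G_1$ is contained in some SCC of $G_2$, so $|\mathcal{C}_{\max}|$ is weakly increasing in the sub-graph order. Hence for every $\eta > 0$ the event $\mathcal{Q}_n^{-\eta} := \{|\mathcal{C}_{\max}| \geq (\alpha-\eta) n\}$ is increasing and $\mathcal{Q}_n^{+\eta} := \{|\mathcal{C}_{\max}| \leq (\alpha+\eta) n\}$ is decreasing. Convergence in probability of $|\mathcal{C}_{\max}|/n$ to $\alpha$ is equivalent to $\prob(\mathcal{Q}_n^{\pm\eta}) \to 1$ for every $\eta > 0$.

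For step (ii), Theorem~3.9 of \cite{Cao2019OnDigraphs} yields exactly $|\mathcal{C}_{\max}|/n \to \alpha$ in probability under $\IRD_n(T, \kappa)$. The same conclusion extends to any admissible sequence $\kappa'_n$ satisfying \eqref{eq:kern sequence CCI}: the perturbation $\kappa'_n - \kappa$ vanishes pointwise and, through Proposition~\ref{prop:IRD regularity inf}, meets the uniform integrability conditions of Theorem~3.9, so the fixed-point system \eqref{eq:pi+}--\eqref{eq:pi-} has the same solutions $\pi^\pm_x$ as in the unperturbed case. Consequently $\prob(\IRD_n(T, \kappa'_n) \in \mathcal{Q}_n^{\pm\eta}) \to 1$ uniformly over admissible sequences, which is exactly the hypothesis of Theorem~\ref{thm:CCI to IRD}.

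Step (iii) is the principal technical obstacle. By Markov applied to $\expec[T^{1+\varepsilon}] < \infty$ one has $q_t = O(t^{-(1+\varepsilon)})$, hence $u_n^\uparrow(\tau) = \Omega(n^{(1-\tau)/(1+\varepsilon)})$ and the expected number of unstable vertices is $O(n^\tau)$, which is $o(n)$ for any admissible $\tau < 1$. Write $\CCI^-$ for the graph obtained by deleting all arcs incident to unstable vertices; then $\CCI^- \subseteq \CCI$ gives the trivial inequality $|\mathcal{C}_{\max}(\CCI^-)| \leq |\mathcal{C}_{\max}(\CCI)|$. The reverse inequality up to $o_\prob(n)$ follows because arcs between two stable vertices are untouched, so the SCCs of $\CCI$ and $\CCI^-$ restricted to the stable set differ only through paths that rely on intermediate unstable vertices; the total number of arcs incident to unstable vertices is a sum of $O_\prob(n^\tau)$ uniform endpoints among $\lfloor \mu n \rfloor$ arcs, hence itself $o_\prob(n)$, so their removal can shrink the giant SCC by at most $o_\prob(n)$ vertices. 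Combining both bounds delivers Assumption~\ref{ass:CCI unstable vertices} for $\mathcal{Q}_n^{\pm\eta}$, and Theorem~\ref{thm:CCI to IRD} then transfers the conclusion to $\CCI$. Letting $\eta \downarrow 0$ finishes the proof.
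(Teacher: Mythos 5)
Your steps (i) and (ii) are sound and match the paper's strategy: the largest-SCC events are monotone, Proposition~\ref{prop:IRD regularity inf} supplies the regularity needed for Theorem~3.9 of \cite{Cao2019OnDigraphs}, and Assumption~\ref{ass:irreducibility} gives irreducibility. The gap is in step (iii), where you verify Assumption~\ref{ass:CCI unstable vertices} by arguing that the arcs incident to unstable vertices number $o_\prob(n)$ and that ``their removal can shrink the giant SCC by at most $o_\prob(n)$ vertices.'' That inference is invalid: deleting arcs is not controlled by their count. A single deleted arc can shatter a strongly connected component of linear size (consider a long directed cycle, or a linear-sized set of stable vertices whose return paths all route through one unstable hub), so $|\mathcal{C}_{\max}(\CCI)| - |\mathcal{C}_{\max}(\CCI^-)| = o_\prob(n)$ does not follow from the number of removed arcs being sublinear; it would require a structural argument about the random graph that you do not give. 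This is precisely the step the paper is careful to avoid.

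The paper's proof runs the comparison in the opposite, ``additive'' direction, where arc count does control the damage. It first applies Theorem~\ref{thm:CCI to IRD} to the truncated model $\CCI^-$ itself (for which Assumption~\ref{ass:CCI unstable vertices} is trivially satisfied, since the concentration lemmas are unaffected by discarding arcs assigned to unstable types), and transfers from $\IRD$ not only the size of the largest SCC (Theorem~3.9 of \cite{Cao2019OnDigraphs}) but also the bound $|\mathcal{C}_{(2)}|\leq \log(n)^2$ on the second largest (Theorem~3.11), so that $S_n$, the size of the union of the top $n^{p}$ SCCs of $\CCI^-$, satisfies $S_n/n \to \alpha$. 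It then observes that $\CCI$ is obtained from $\CCI^-$ by \emph{adding} at most $n^{p}$ arcs (whp), and each added arc can only merge existing SCCs or attach unstable vertices, giving the sandwich $|\mathcal{C}_{(1)}^-| \leq |\mathcal{C}_{\max}| \leq S_n + N_n^\uparrow$ with $N_n^\uparrow = o_\prob(n)$ unstable vertices. To repair your argument you would either need to adopt this sandwich (which also forces you to control the union of the top $n^{p}$ components, not just the largest), or supply a genuine proof that the deletion of unstable arcs only removes $o_\prob(n)$ vertices from the giant; as written, Assumption~\ref{ass:CCI unstable vertices} for $\mathcal{Q}_n^{\pm\eta}$ is not established.
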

\begin{remark}
    The values $\pi_x^\pm$ can be recognised as survival probabilities of a multi-type branching process in which the number of children with type $t$ born from a parent with type $x$ is Poisson distributed with parameter $\kappa(x, t) q_t$ for $\pi_x^-$ or $\kappa(t, x) q_t$ for $\pi_x^+$.
\end{remark}

In Figure~\ref{fig:numerical experiment GSCC} we compare the result of Proposition~\ref{prop:GSCC} to the normalised size of the largest SCC one would obtain through Tarjan's algorithm (see \cite{Tarjan1972Depth-FirstAlgorithms}) numerically from realisations the cell-cell interaction model. In these numerical experiments, we consider $\CCI_{n, \mu}(T, C, I, J)$ with $n = 10000$, varying $\mu$, and input distributions/indicator functions summarised by the following vectors/matrices:
\begin{equation}\label{eq:example input}
\bm{q} = \begin{bmatrix} 0.1 \\ 0.15 \\ 0.25 \\ 0.5 \end{bmatrix}, \quad \bm{P} = \begin{bmatrix} 0.2 & 0.2 \\ 0 & 0.1 \\ 0.5 & 0 \end{bmatrix}, \quad I = \begin{bmatrix} 0 & 1 & 1 \\ 1 & 0 & 1 \\ 1 & 1 & 0 \\ 0 & 1 & 0 \end{bmatrix}, \quad J = \begin{bmatrix} 1 & 0 \\ 0 & 1 \\ 1 & 1  \\ 0 & 1 \end{bmatrix}.
\end{equation}
Here, entry $k$ in the vector $\bm{q}$ indicates $\prob(T = k)$. Similarly, entry $(i, j)$ in the matrix $\bm{P}$ indicates $\prob(C = (i, j))$. In the numerical experiments we applied a Monte-Carlo approach with 1000 instances of each $\CCI_{n, \mu}(T, C, I, J)$ have been generated, their largest SCC size recorded, and averaged to obtain the numerical largest SCC size. We have also plotted the 95\% confidence bounds. We observe that Proposition~\ref{prop:GSCC} accurately mathches the numerical results.

\begin{figure}
    \centering
    \includegraphics[scale = 0.7]{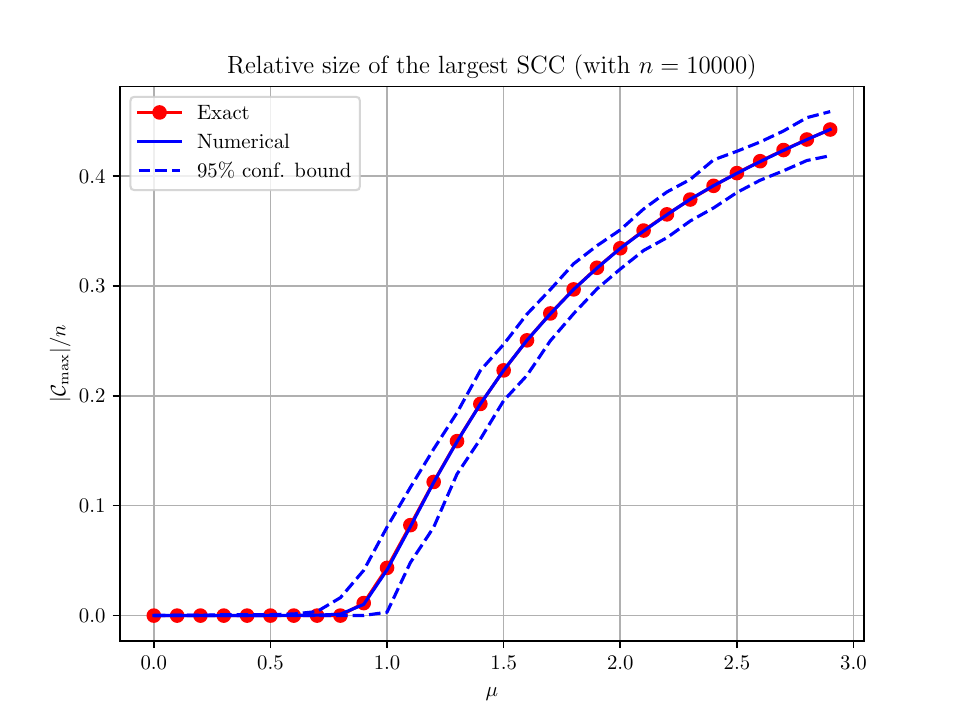}
    \caption{\emph{The size of the largest SCC computed through both Proposition~\ref{prop:GSCC} and 1000 instances of $\CCI_{n, \mu}(T, C, I, J)$. Input parameters for the model are given in \eqref{eq:example input}, and the two-sided 95\% (Monte-Carlo) confidence bounds are plotted to highlight the deviation in numerical largest SCC sizes.}}
    \label{fig:numerical experiment GSCC}
\end{figure}

\subsection{An algorithm to generate inhomogeneous random digraphs}
A final application of Theorem~\ref{thm:IRD to ARD} is that it provides simple algorithm generate realisations of $\IRD$. If one were to na\"{i}vely generate instances of on $\IRD$, first a list of vertex-types will be generated. Thereafter, we would iterate over all vertex pairs $v, w \in [n]$, look up their types $T_v, T_w \in \mathcal{S}$, and finally realise a Bernoulli random variable with parameter $\kappa(T_v, T_w) / n$ to determine whether arc $(v, w)$ will be placed in the graph. This approach will require $\mathcal{O}(n^2)$ operations to generate a realisation of $\IRD$.

The main issue with the na\"{i}ve approach to generating $\IRD$s, is the fact that $\IRD$s are sparse. Hence, a lot of time is spent on arcs that in the end will not be part of the realisation. Ideally, one would like to skip arcs that will not make it into the final realisation, to speed up the process. Our $\ARD$ model provides such an method since it fixes the number of edges upfront. By generating the arcs in one of the initial model steps, we remove the need to consider vertex pairs that ultimately have no arc placed between them. Moreover, through Theorem~\ref{thm:IRD to ARD} we know how to pick this model such that realisations are equivalent to a given $\IRD$ model. Our algorithm to generate $\ARD$ (and hence $\IRD$) is given in Section~\ref{sec:ARD}.

It should be noted that techniques have already been devised to considerably speed up graph generation of inhomogeneous random graphs (see e.g. \cite{Miller2011EfficientDegrees, Hagberg2015FastGraphs}) by skipping arcs that will not make it into the final realisation. However, while the algorithm for the Chung-Lu model~\cite{Miller2011EfficientDegrees} is exact, for general inhomogeneous graphs the algorithm also outputs a graph that is only asymptotically equivalent to the original model~\cite{Hagberg2015FastGraphs}. Moreover, the general algorithm requires a reasonable amount of computations to be implemented, as it needs to compute definite integrals involving $\kappa$ and solve related root problems. On the other hand, using the $\ARD$ only requires the formula for $\kappa$. It is therefore much more easy to implement and needs less computations to run.

If we compare our algorithm to \cite{Miller2011EfficientDegrees} in the Chung-Lu case (Section~\ref{sec:CL}), we see that it performs similar. To create the graph, we would have to execute
\[
\sum_{t = 1}^\infty \sum_{s = 1}^\infty \kappa(t, s) q_t q_s = \expec[W]
\]
samples without replacement from some set. Since these can be executed in constant time (see e.g. \cite{Vitter1987AnSampling}) we see that only $\mathcal{O}(\expec[W] n)$ random samples are needed. This is the same condition as given in \cite{Miller2011EfficientDegrees}. We do not see the extra factor $1/2$, since our Chung-Lu model is directed.

\section{Strategy, tools, and proofs of the main results}\label{sec:proof thms}
Here we will present the main strategy and tools to prove both Theorem~\ref{thm:IRD to ARD} and \ref{thm:CCI to IRD}. For both theorems we will first delve into the heuristics behind the proof, before we will outline all the technical tools needed to provide the proof. The proofs of these technical tools are postponed until Section~\ref{sec:tool proof}. Each proof section will end with a proof of its respective theorem based on the technical tools.

\subsection{Heuristics behind Theorem~\ref{thm:IRD to ARD}}\label{sec:heuristics thm 1}
To see how a parallel between an $\IRD$ and an $\ARD$ can be drawn, it is important to observe that an $\ARD$ is an $\IRD$ with the number of arcs per pair of vertex-types has been revealed. Indeed, if we know that e.g. $m$ arcs will be drawn from vertices of type $1$ to vertices of type $2$ in $\IRD$, then the only information missing is the location where the $m$ arcs will appear. Since the appearance probability is the same for each possible location (in $\IRD$ it depends only on the vertex-types involved), arcs shall be assigned through a uniform choice between the possible locations. This is a uniform choice without replacement, since each location can be chosen only once. Thus, we find ourselves in the setting of $\ARD$. To make this observation formal, we will introduce the concept of an \emph{arc-to-vertex-type function}.

\begin{definition}[Arc-to-vertex-type function]\label{def:arc to vertex type}
    In $\IRD_n(T, \kappa_n)$ we define the random arc-to-vertex-type function $A_n : \mathcal{S} \times \mathcal{S} \to \mathbb{N}$ as the function that counts the number of arcs placed between two vertex-types. Specifically, if we denote the arc-set of $\IRD_n(T, \kappa_n)$ by $\mathcal{A}_n$, then for fixed $t, s \in \mathcal{S}$ the arc-to-vertex-type function is defined as
    \[
    A_n(t, s) := \left| \{(v, w) \in \mathcal{A}_n : (T_v, T_w) = (t, s) \} \right|.
    \]
\end{definition}

In light of Definition~\ref{def:arc to vertex type}, the previous observation communicates that the law of $\IRD_n(T, \kappa_n)$ conditioned on $A_n$ is equal to the law of $\ARD(T, \Lambda_n)$ with $\Lambda_n = A_n$. The main difference between $\IRD$ and $\ARD$ now shows through the fact that $A_n$ is a \emph{random} function, while $\Lambda_n$ is not. If we seek to show that $\IRD$ and $\ARD$ are equivalent, then our hope is that the random function $A_n$ starts to become more and more deterministic as $n\to \infty$. This deterministic limit will then provide the proper scaling of $\Lambda_n$.

To find the correct scaling for $\Lambda_n$ we can be inspired by the law of large numbers. For a fixed type $t $ we know for $n$ large that $N_t \approx q_t n$. When we fix a second vertex type $s \in \mathcal{S}$, then we similarly have that $N_s \approx q_s n$. Putting both together this shows that approximately $q_t q_s n^2$ arc generation attempts in $\IRD_n(T, \kappa_n)$ will be done from a vertex of type $t$ to a vertex of type $s$. Since all these generation attempts have an independent success probability of $\kappa_n(t, s)/n$, we expect that $A_n(t, s) \approx q_t q_s \kappa_n(t, s) n$ for large $n$. Thus, if there is an equivalence between $\IRD$ and $\ARD$, we should choose $\Lambda_n(t, s) \approx q_t q_s \kappa_n(t, s) n$.

\begin{figure}
    \centering
    \includegraphics[scale = 0.5]{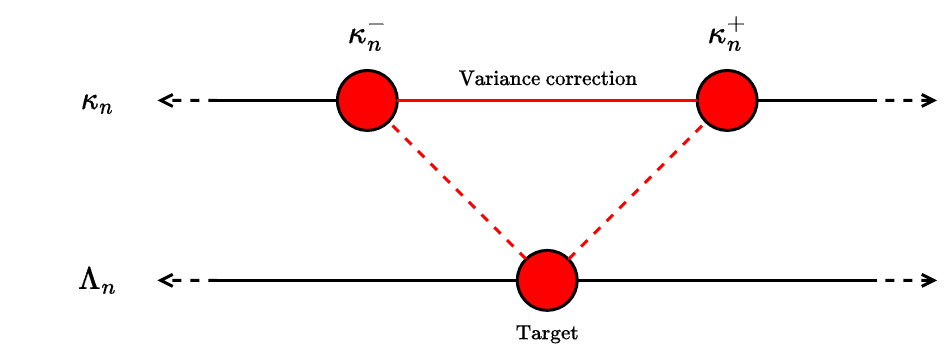}
    \caption{\emph{The idea behind the main result. When we seek to show convergence in $\ARD(T, \Lambda_n)$ for fixed $\Lambda_n$, then in $\IRD$ the same property needs to be true for a range of $\kappa_n$. This range is chosen such that with high probability $\kappa_n \in [\kappa_n^-, \kappa_n^+]$ given the arc-to-vertex-type function equal to $\Lambda_n$.}}
    \label{fig:idea main result}
\end{figure}

Although the heuristics based on the law of large numbers give a good hint at the choice of $\Lambda_n$, it hides some of the intricacies. First and foremost, it does not reveal how the variance in $A_n$ plays a role. As $n$ tends to infinity, the probability that $A_n$ equals $\Lambda_n$ for any choice of $\Lambda_n$ will become negligibly small. Thus, if any property needs to be translated from $\IRD$ to $\ARD$, its validity in $\IRD_n(T, \kappa_n)$ for one fixed $\kappa_n$ is never enough. Instead, we will require that the property is true in $\IRD_n(T, \kappa_n')$ for all $\kappa'_n$ that fall in a ``range'' from some smallest $\kappa_n^-$ to some largest $\kappa^+_n$. This range must be chosen such that almost all input $\kappa_n$ in $\texttt{IRD}_n(T, \kappa_n)$ that could produce the event $\{A_n = \Lambda_n\}$ with high probability fall between $\kappa^-_n$ and $\kappa_n^+$. See Figure~\ref{fig:idea main result} for an illustration.

Secondly, the heuristics based on the law of large numbers assumes that the asymptotic concentration is true for all vertex-types simultaneously. However, for fixed $n$ there will always be some vertex-types for which concentration has not kicked in yet. For example, if we dynamically chose the vertex-type $t_n \in \mathcal{S}$ such that $q_{t_n} \approx 1/n$, then for the number of vertices $N_{t_n}$ with type $t_n$ we roughly have that $N_{t_n} \sim \texttt{Poi}(1)$. This is more variable than the deterministic $N_{t_n} = q_{t_n} n \approx 1$ we expect. Thus, it is useful to distinguish between the vertex-types for which concentration based on the law of large numbers has started to kick in, and the vertex types that might still behave more erratic. This is why we explicitly defined stability of vertex-types (cf. Definition~\ref{def:stable vertices}).

\begin{remark}
    In Definition \ref{def:stable vertices} we need $q_t \gg 1/n$ to ensure concentration started to kick in, explaining the need for the parameter $\tau \in (0, 1)$. Moreover, keeping $\tau$ as a parameter will allow us to control how ``fast'' concentration occurs. Often, the specific choice of $\tau$ does not matter or is clear from context in which case we will omit it.
\end{remark}

Unstable vertex-types should not influence probability of a certain property being true in $\IRD$ too much. If they do, relating $\IRD$ to $\ARD$ will be impossible. Thus, when setting the aforementioned range of kernels in $\IRD$, we need to ensure that for unstable vertex-types the value zero also falls in the range. Practically, this will entail that the property's probability in $\IRD$ will stay the same irrespective of the inclusion of unstable vertex-types. Formally, this translated into Assumption~\ref{ass:kernel bound}.

The strategy to prove Theorem~\ref{thm:IRD to ARD} will exploit all previous considerations. It will employ four steps. They will coincide with the proof steps in Section~\ref{sec:proof thm I} and the main ideas behind them are given below:

\paragraph{Step I.} We fix two kernels $\kappa^+_n$ and $\kappa^-_n$ with the property that no matter the choice of $\kappa_n'$ in \eqref{eq:kappa seq bound} we have for stable vertex-types $t, s \in \mathcal{S}$ that $\kappa^-_n(t, s) \leq \kappa'_n(t, s) \leq \kappa^+(t, s)$. By using the law of total probability to condition on the realisation of $A_n(t, s)$ (cf. Definition~\ref{def:arc to vertex type}), we can ``decompose'' probabilities of $\IRD_n(T, \kappa^\pm_n)$ into probabilities of $\ARD_n(T, \Lambda_n')$ (for different values of $\Lambda_n'$) and probabilities of realisations of $A_n(t, s)$. We use monotonicity of $\mathcal{Q}_n$ to transform these decomposed probabilities in upper- and lower- bounds involving $\ARD_n(T, \Lambda_n)$ with $\Lambda_n(t, s) = \lfloor q_t q_s n \kappa(t, s) \rfloor$. 

\paragraph{Step II.} The upper- and lower bounds we get from Step I will involve error probabilities of $A_n(t, s)$ for the kernels $\kappa^\pm_n(t, s)$. We note for e.g. $\kappa^+_n$ that $A_n(t, s) \sim \texttt{Bin}(N_t N_s, \kappa^+(t, s)/n)$. For stable vertex-types, we can use the concentration of $N_t$ and $N_s$ that has started to kick in to show that $A_n(t, s) \approx \texttt{Bin}(q_t q_s n^2, \kappa^+(t, s)/n)$, and use this to conclude that the error probabilities converge to zero. For unstable vertices, we will use the realisation that their inclusion does not alter the final result of the calculations (cf. Assumption~\ref{ass:kernel bound}) to remove error terms involving them.

\subsection{Proof of Theorem~\ref{thm:IRD to ARD}}\label{sec:proof thm I}
As outlined in the strategy, an important characteristic of the proof is the distinction between stable and unstable vertex-types. Not all vertices are important to us, but as $n \to \infty$ we want all vertex types to start playing a role. Therefore, it is important to characterise approximately how may stable vertex-types there are. This is covered by the following lemma.

\begin{lemma}[Amount of stable vertex-types]\label{lem:no stable} 
    Recall Definition~\ref{def:stable vertices} and suppose that $\expec[T^\delta] < \infty$ for some $\delta > 0$. Then, for $n$ large we have \[ u_n^\uparrow(\tau) \leq \left\lceil n^{(1 - \tau)/(1 + \delta)} \right\rceil.\]
\end{lemma}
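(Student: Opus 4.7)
The plan is to exploit the relabelling freedom granted by the remark following Assumption~\ref{ass:type distr}: I will sort the types so that $q_1 \ge q_2 \ge q_3 \ge \cdots$ before applying the bound. This is essential --- without it, a distribution such as $q_{10^k} = 2^{-(k+1)}$ (mass only on powers of $10$) has $u_n^\uparrow(\tau)$ polynomially much larger than $n^{(1-\tau)/(1+\delta)}$ even though $\expec[T^\delta] < \infty$. Sorting is always available per the remark, and it only decreases $\expec[T^\delta]$ among permutations, so the hypothesis is preserved.

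With $q$ non-increasing, define $s^\star := \max\{s \in \mathbb{N} : q_s \ge n^{-1+\tau}\}$ (or $0$ if the set is empty), so that $u_n^\uparrow(\tau) = s^\star + 1$. Monotonicity upgrades the single-point bound $q_{s^\star} \ge n^{-1+\tau}$ to $q_{s'} \ge n^{-1+\tau}$ for every $s' \in \{1, \dots, s^\star\}$. A converse-Markov computation against the full moment then gives
\[
\expec[T^\delta] \;\ge\; \sum_{s'=1}^{s^\star}(s')^\delta\, q_{s'} \;\ge\; n^{-1+\tau}\sum_{s'=1}^{s^\star}(s')^\delta \;\ge\; n^{-1+\tau}\cdot\frac{(s^\star)^{1+\delta}}{1+\delta},
\]
using $\sum_{s'=1}^{s^\star}(s')^\delta \ge \int_0^{s^\star} x^\delta\, dx$. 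Rearranging,
\[
s^\star \;\le\; \bigl[(1+\delta)\,\expec[T^\delta]\bigr]^{1/(1+\delta)} \cdot n^{(1-\tau)/(1+\delta)},
\]
which yields the claimed bound on $u_n^\uparrow(\tau) = s^\star + 1$ once $n$ is large enough for the constant factor to be absorbed by the ceiling.

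The main --- and really only --- difficulty is justifying the non-increasing relabelling, since it is doing all the heavy lifting. A naive application of Markov's inequality directly to $T^\delta$ yields only $s^\star \le C\, n^{(1-\tau)/\delta}$, whose exponent $1/\delta$ is strictly larger than $1/(1+\delta)$. The sharper exponent $1/(1+\delta)$ comes precisely from summing $(s')^\delta q_{s'}$ over the entire initial segment $s' \le s^\star$ and gaining an extra factor of $s^\star$ via the integral comparison; that sum-based lower bound is only valid once the mass has been concentrated on the small labels.
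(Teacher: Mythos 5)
Your sorted-labels computation is internally sound up to its last line, and it does take a different route from the paper: the paper never re-sorts, but instead asserts that $\expec[T^\delta]<\infty$ forces $q_t \leq t^{-1-\delta}$ for all large $t$ and then computes the auxiliary index $\widehat{u}_n^\uparrow(\tau) = \inf\{t : s^{-1-\delta} < n^{-1+\tau} \text{ for all } s \geq t\} = \lceil n^{(1-\tau)/(1+\delta)}\rceil$. Your counterexample ($q_{10^k} = 2^{-(k+1)}$) is correct and genuinely valuable: it shows that the implication ``$\sum_t t^\delta q_t < \infty \Rightarrow q_t \leq t^{-1-\delta}$ eventually'' used in the paper's proof is itself only valid under a monotonicity-type regularity of $(q_t)_t$, so you have located a real soft spot in the stated lemma and its proof. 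Nevertheless, your write-up does not prove the statement as given, for two reasons.

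First, the final step fails: from $s^\star \leq \bigl[(1+\delta)\expec[T^\delta]\bigr]^{1/(1+\delta)} n^{(1-\tau)/(1+\delta)}$ you cannot conclude $u_n^\uparrow(\tau) = s^\star + 1 \leq \lceil n^{(1-\tau)/(1+\delta)}\rceil$. The ceiling absorbs at most an additive $1$, never a multiplicative factor, and here $\bigl[(1+\delta)\expec[T^\delta]\bigr]^{1/(1+\delta)} > 1$ whenever $T \geq 1$ almost surely (so always in this paper's setting). What you actually prove is $u_n^\uparrow(\tau) = O\bigl(n^{(1-\tau)/(1+\delta)}\bigr)$, not the constant-free bound of Lemma~\ref{lem:no stable}; your integral-comparison bound is too lossy to remove the constant. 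Under sorted labels the constant-free bound can be rescued, but only via the sharper fact that monotonicity together with $\sum_t t^\delta q_t < \infty$ gives $t^{1+\delta} q_t \to 0$ (e.g.\ $\lceil m/2\rceil (m/2)^\delta q_m \leq \sum_{m/2 < s \leq m} s^\delta q_s \to 0$), i.e.\ exactly the inequality the paper invokes, after which the paper's computation of $\widehat{u}_n^\uparrow$ finishes the job. Second, the relabelling is not the free move you claim: the remark after Assumption~\ref{ass:type distr} only licenses relabelling a countable type space into $\mathbb{N}$, whereas Definition~\ref{def:stable vertices} --- and with it $u_n^\uparrow(\tau)$, the stable/unstable split, Assumption~\ref{ass:kernel bound} and the choice of $\Lambda_n$ in Theorem~\ref{thm:IRD to ARD} --- is tied to the labelling actually used, so after sorting you are proving a statement about a different quantity than the one the lemma is applied to elsewhere in the paper. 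The honest conclusion of your analysis is that the lemma implicitly requires $(q_t)_t$ to be (eventually) non-increasing (or the bound must be weakened by a constant, or to exponent $(1-\tau)/\delta$); as a proof of the lemma as stated, the argument has the gaps above.
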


A helpful property of stable vertex-types, as highlighted in the heuristics, is that they concentrate around their mean. To make this precise, we will define what this means for pairs of stable vertex-types. We will call pairs of vertex-types that started their concentration to be \emph{well-concentrated}. 

\begin{definition}[Well-concentrated vertex-types]\label{def:well-concentrated types}
    Given two vertex types $t, s \in \mathcal{S}$ we say that they are well-concentrated around their mean if the following event occurs:
    \[
    \mathcal{V}_{ts} := \left\{|N_t - q_t n | \leq  \log(n) \sqrt{q_t n}\right\} \cap \left\{|N_s - q_s n | \leq \log(n) \sqrt{q_s n} \right\}.
    \]
\end{definition}

We will see below that $\mathcal{V}_{ts}$ occurs with high probability for every pair of vertex-types. This highlights what we meant in the heuristics with ``concentration starting to kick in'' when a vertex type is stable. For stable vertex types we have that $\log(n) \sqrt{q_t n} \ll q_t n$, while for unstable vertex-types we have that $\log(n) \sqrt{q_t n} \gg q_t$. In other words, unstable vertex-types are expected to exhibit great variability in the sense that their count could be significantly far away from their mean.

\begin{lemma}[Vertices are well-concentrated]\label{lem:well-concentrated types}
    Fix two vertex-types $t, s \in \mathcal{S}$. For any $r > 0$ we have for $n$ large that $\prob(\neg\mathcal{V}_{ts}) \leq 2 \exp(-\log(n)^2 / 2)$.
\end{lemma}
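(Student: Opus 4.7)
My plan is to combine a union bound with Bernstein's inequality for binomial sums. Since the types $T_1, \dots, T_n$ are assigned i.i.d.\ from $T$, the counts satisfy $N_t \sim \text{Bin}(n, q_t)$ and $N_s \sim \text{Bin}(n, q_s)$. By the definition of $\mathcal{V}_{ts}$ and a union bound,
\[
\prob(\neg \mathcal{V}_{ts}) \leq \prob\bigl(|N_t - q_t n| > \log(n) \sqrt{q_t n}\bigr) + \prob\bigl(|N_s - q_s n| > \log(n) \sqrt{q_s n}\bigr),
\]
so it suffices to bound each single-type deviation by $\exp(-\log(n)^2/2)$ for $n$ large.

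For each summand, I would invoke Bernstein's inequality: for $X \sim \text{Bin}(n, p)$ and $\lambda > 0$,
\[
\prob(|X - np| \geq \lambda) \leq 2 \exp\biggl(-\frac{\lambda^2/2}{np(1-p) + \lambda/3}\biggr).
\]
Substituting $\lambda = \log(n)\sqrt{np}$, the numerator simplifies to $\log(n)^2 np/2$, while the denominator is $np(1-p) + \log(n)\sqrt{np}/3$. Since $t$ and $s$ are fixed, $p$ equals the positive constant $q_t$ (respectively $q_s$), so the variance term $np(1-p)$ dominates the linear correction $\log(n)\sqrt{np}/3$ as $n \to \infty$. Hence the Bernstein exponent tends to $\log(n)^2/(2(1-p))$, which strictly exceeds $\log(n)^2/2$ whenever $p > 0$.

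The only mild subtlety is constant bookkeeping: Bernstein's bound carries a factor of $2$ from being two-sided, and the union bound introduces another factor of $2$. The slack $\log(n)^2/(2(1-p)) - \log(n)^2/2 = \log(n)^2 \cdot p/(2(1-p))$ grows without bound, so it absorbs an extra $\log 2$ in the exponent for $n$ large enough. This reduces each single-type bound to $\exp(-\log(n)^2/2)$ and yields the claimed estimate after the union bound. I do not expect any genuine obstacle here beyond this routine constant-tracking, since the random variables involved are simple binomial counts with fixed success probabilities.
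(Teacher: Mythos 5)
Your proposal is correct and takes essentially the same route as the paper: a union bound over the two types, followed by an exponential tail bound for the binomial counts $N_t \sim \texttt{Bin}(n, q_t)$ with deviation $\log(n)\sqrt{q_t n}$. The paper uses the Chernoff bound (Theorem 2.21 of the van der Hofstad reference) where you use two-sided Bernstein, and your bookkeeping of the factors of $2$ (valid here because $t,s$ are fixed so $q_t, q_s$ are constants) is if anything slightly more careful than the paper's.
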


When vertex-types are well-concentrated and stable, the approximation $N_t N_s \approx n^2 q_t q_s$ is valid. Thus, if we fix a kernel $\kappa_n'$ then we know in $\IRD_n(T, \kappa_n')$ that $A_n(t, s) \sim \texttt{Bin}(N_t N_s, \kappa_n'/n) \approx \texttt{Bin}(n^2 q_t q_s, \kappa_n'(t, s)/n)$. This will in turn allow us to show that $A_n(t, s) \approx n q_t q_s \kappa_n'(t, s)$, meaning that the heuristics based on averages is valid. We can translate this into the following two lemmas:

\begin{lemma}[Undershoot mixed binomials]\label{lem:undershoot mixed binomials}
    Fix two stable vertex-types $t, s \in \mathcal{S}$ at tolerance $\tau \in (0, 1)$ and set $\Lambda_n(t, s) = \lfloor n q_t q_s \kappa(t, s) \rfloor$ for some kernel $\kappa$. Let $\kappa_n'$ be a different function, and suppose for it there exists an $\alpha \in (1/2 - \tau /2, 1/2)$ and a constant $C > 0$ such that
    \[
    \kappa'_n(t, s) \geq \kappa(t, s) + \frac{C n^{-1/2 + \alpha}}{\sqrt{q_t q_s}}.
    \]
    Then, we have for $n$ large that
    \[
    \prob(A_n(t, s) < \Lambda_n(t, s)) \leq 2 \exp(-\log(n)^2 / 2).
    \]
\end{lemma}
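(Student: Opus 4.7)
The plan is to first reduce to the well-concentration event $\mathcal{V}_{ts}$ and then bound the conditional undershoot probability by a standard Chernoff inequality. Writing
\[
\prob(A_n(t,s) < \Lambda_n(t,s)) \leq \prob(\neg \mathcal{V}_{ts}) + \prob\bigl(A_n(t,s) < \Lambda_n(t,s) \,\big|\, \mathcal{V}_{ts}\bigr),
\]
Lemma~\ref{lem:well-concentrated types} already gives $\prob(\neg \mathcal{V}_{ts}) \leq 2\exp(-\log(n)^2/2)$, so it suffices to show the second summand is of strictly smaller order for $n$ large.

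Conditional on the vertex-type vector $(T_v)_{v \in [n]}$, and hence on $N_t$ and $N_s$, the arc count $A_n(t,s)$ in $\IRD_n(T,\kappa'_n)$ is a sum of independent Bernoullis with parameter $\kappa'_n(t,s)/n$; for $n$ large this parameter is at most one, so $A_n(t,s)$ has a $\texttt{Bin}(N_t N_s, \kappa'_n(t,s)/n)$ distribution (with the obvious modification for $t = s$). On the event $\mathcal{V}_{ts}$ one has $N_t N_s \geq q_t q_s n^2 (1 - \eta_n)$ with $\eta_n \leq 2\log(n)/\sqrt{n\min(q_t,q_s)}$. Stability of $t$ and $s$ at tolerance $\tau$ gives $q_t \wedge q_s \geq n^{-1+\tau}$, hence $\eta_n = O(\log(n)\, n^{-\tau/2}) = o(1)$. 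Writing $\Delta_n := \kappa'_n(t,s) - \kappa(t,s) \geq C n^{-1/2+\alpha}/\sqrt{q_t q_s}$, the hypothesis combined with the above gives, on $\mathcal{V}_{ts}$,
\[
\expec[A_n(t,s) \mid N_t, N_s] - \Lambda_n(t,s) \geq q_t q_s n \Delta_n - \eta_n q_t q_s n \kappa'_n(t,s) \geq \tfrac12 q_t q_s n \Delta_n
\]
for $n$ large, where the last inequality uses that the range $\alpha > 1/2 - \tau/2$ forces the error $\eta_n q_t q_s n \kappa'_n(t,s)$ to be asymptotically negligible against $q_t q_s n \Delta_n \geq C n^{1/2+\alpha}\sqrt{q_t q_s}$.

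The final step is a multiplicative Chernoff lower-tail bound $\prob(X < \expec[X] - t) \leq \exp(-t^2/(2\expec[X]))$ applied with $t = \tfrac12 q_t q_s n \Delta_n$ and mean $\expec[X] \leq 2 q_t q_s n \kappa'_n(t,s)$. This yields a conditional undershoot bound of order $\exp(-c\, q_t q_s n \Delta_n^2 / \kappa'_n(t,s))$; a short case analysis separating $\Delta_n \lesssim \kappa(t,s)$ from $\Delta_n \gtrsim \kappa(t,s)$ shows the exponent is bounded below by a positive power of $n$, which dwarfs $\log(n)^2/2$. Hence for $n$ large the second summand is $o\bigl(\exp(-\log(n)^2/2)\bigr)$ and the total is at most $2\exp(-\log(n)^2/2)$.

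The main obstacle is the delicate balance in the penultimate step: the multiplicative error $\eta_n$ coming from the approximation $N_t N_s \approx q_t q_s n^2$ must be strictly smaller, as a function of $n$, than the \emph{additive} gain $\Delta_n \geq C n^{-1/2+\alpha}/\sqrt{q_t q_s}$ that the hypothesis guarantees. It is precisely here that the joint lower bounds $q_t \wedge q_s \geq n^{-1+\tau}$ (stability) and $\alpha > 1/2 - \tau/2$ (hypothesis) are invoked in a balanced way, which explains why Assumption~\ref{ass:kernel bound} was formulated with this same coupled range.
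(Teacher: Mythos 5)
Your proof is correct and follows essentially the same route as the paper's: restrict to the well-concentration event $\mathcal{V}_{ts}$ (Lemma~\ref{lem:well-concentrated types}), condition on $N_t, N_s$, and apply a lower-tail Chernoff bound, with stability ($q_t \wedge q_s \geq n^{-1+\tau}$) and $\alpha > 1/2 - \tau/2$ combining to show the conditional mean exceeds $\Lambda_n(t,s)$ by a margin that makes the exponent a positive power of $n$. The only notable difference is cosmetic: the paper first replaces $\kappa'_n$ by the boundary kernel $\kappa(t,s) + C n^{-1/2+\alpha}/\sqrt{q_t q_s}$ via stochastic domination, which also disposes of your unjustified aside that $\kappa'_n(t,s)/n \leq 1$ for large $n$ (there is no upper bound on $\kappa'_n$, but truncation of the connection probability at $1$ only increases $A_n(t,s)$, so the undershoot bound is unaffected), whereas you keep $\kappa'_n$ and absorb the possibly large values through your final case analysis on $\Delta_n$ versus $\kappa(t,s)$.
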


\begin{lemma}[Overshoot mixed binomials]\label{lem:overshoot mixed binomials}
    Fix two stable vertex-types $t, s \in \mathcal{S}$ at tolerance $\tau \in (0, 1)$ and set $\Lambda_n(t, s) = \lfloor n q_t q_s \kappa(t, s) \rfloor$ for some kernel $\kappa$. Let $\kappa_n'$ be a different function, and suppose for it there exists an $\alpha \in (1/2 - \tau /2, 1/2)$ and a constant $C > 0$ such that
    \[
    \kappa'_n(t, s) \leq \kappa(t, s) - \frac{C n^{-1/2 + \alpha}}{\sqrt{q_t q_s}}.
    \]
    Then, we have for $n$ large that
    \[
    \prob(A_n(t, s) > \Lambda_n(t, s)) \leq 2 \exp(-\log(n)^2 / 2).
    \]
\end{lemma}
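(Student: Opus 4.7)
The strategy is to decompose by conditioning on the event $\mathcal{V}_{ts}$ that both $N_t$ and $N_s$ are well-concentrated, and then apply a Chernoff tail bound to the resulting conditional binomial distribution of $A_n(t,s)$. Concretely, the first step is to write
\[
\prob(A_n(t,s) > \Lambda_n(t,s)) \leq \prob(\neg\mathcal{V}_{ts}) + \prob\bigl(A_n(t,s) > \Lambda_n(t,s),\ \mathcal{V}_{ts}\bigr),
\]
and bound the first term by $2\exp(-\log(n)^2/2)$ via Lemma~\ref{lem:well-concentrated types}. This already matches the target, so the rest of the argument needs to show that the second term decays strictly faster and can be absorbed for $n$ large.

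Conditional on the realised vertex-types, $A_n(t,s) \sim \mathrm{Bin}(M, \kappa'_n(t,s)/n)$ with $M \leq N_t N_s$ (and $M = N_t(N_t - 1)$ if $t = s$, which does not affect the bounds). On $\mathcal{V}_{ts}$, expanding the product from Definition~\ref{def:well-concentrated types} and using stability ($q_t, q_s \geq n^{-1+\tau}$) yields
\[
M \leq q_t q_s\, n^2 \bigl(1 + O(\log(n)\, n^{-\tau/2})\bigr).
\]
Combining this with the hypothesis on $\kappa'_n(t,s)$, the conditional mean of $A_n(t,s)$ satisfies
\[
\mu \leq q_t q_s\, n\, \kappa(t,s) - C\sqrt{q_t q_s}\, n^{1/2+\alpha} + O\bigl(\log(n)\, n^{1-\tau/2}\, q_t q_s\, \kappa(t,s)\bigr).
\]
Dividing the error term by $\sqrt{q_t q_s}\, n^{1/2+\alpha}$ produces a quantity of order $\log(n)\, n^{1/2-\tau/2-\alpha}\sqrt{q_t q_s}\, \kappa(t,s)$, which vanishes as $n \to \infty$ precisely because $\alpha > 1/2 - \tau/2$. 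Hence, for all $n$ sufficiently large, the gap $\lambda := \Lambda_n(t,s) - \mu$ is at least $(C/2)\sqrt{q_t q_s}\, n^{1/2+\alpha}$.

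The final step is the standard binomial Chernoff inequality $\prob(X \geq \mu + \lambda) \leq \exp(-\lambda^2/(2(\mu + \lambda/3)))$. With $\mu = O(n q_t q_s)$ and the lower bound on $\lambda$ above, the exponent is of order at least $\min\bigl(n^{2\alpha},\ \sqrt{q_t q_s}\, n^{1/2+\alpha}\bigr)$; by stability, the second quantity is at least of order $n^{-1/2+\alpha+\tau}$. Both expressions are positive powers of $n$ under the hypotheses, so the conditional tail is $\exp(-\Omega(n^{\beta}))$ for some $\beta > 0$, and in particular is $o(\exp(-\log(n)^2/2))$. Combining both contributions yields the claimed bound for $n$ large.

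The main obstacle is controlling the random fluctuation of $N_t N_s$ around its mean so that it does not erase the deterministic gap the hypothesis forces between $\mu$ and $\Lambda_n(t,s)$. The condition $\alpha > 1/2 - \tau/2$ is exactly what makes this work: it ensures the $O(\log(n) n^{-\tau/2})$ multiplicative correction coming from Lemma~\ref{lem:well-concentrated types} is negligible next to the $C\sqrt{q_t q_s}\, n^{1/2+\alpha}$ advantage provided by the hypothesis, leaving enough room for Chernoff to close the argument.
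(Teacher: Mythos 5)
Your proposal is correct and follows essentially the same route as the paper: condition on the well-concentration event $\mathcal{V}_{ts}$, dispose of its complement via Lemma~\ref{lem:well-concentrated types}, show the hypothesis $\alpha > 1/2 - \tau/2$ forces a gap of order $\sqrt{q_t q_s}\,n^{1/2+\alpha}$ between $\Lambda_n(t,s)$ and the conditional binomial mean, and close with a Chernoff bound whose stretched-exponential tail is absorbed for large $n$. The only differences are cosmetic (you work directly with $\kappa'_n$ instead of the paper's extremal kernel $\kappa_n^-$ and its dominating binomial, and your Bernstein-style bookkeeping of the exponent), and your absorption of the $\exp(-\Omega(n^\beta))$ term into $2\exp(-\log(n)^2/2)$ is exactly as loose as the paper's own final step.
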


The final piece of the puzzle that is missing, is the influence of monotone events. As argued in the heuristics, monotonicity of $\mathcal{Q}_n$ will provide upper- and lower-bounds on probabilities involving $\ARD$. Thus, we will need a result that actually show these events are able to give us bounds. The following lemma will tackle this.

\begin{lemma}[Monotonicity in \ARD]\label{lem:monotonicity ARD}
    Let $\mathcal{Q}_n$ be a monotone event and let $\Lambda_n$ and $\Lambda_n'$ be two functions such that for all $t, s \in \mathcal{S}$ we have that $\Lambda_n(t, s) \leq \Lambda_n'(t, s)$. Then, we have the following:
    \begin{enumerate}
        \item If $\mathcal{Q}_n$ is increasing, then $\prob(\ARD_n(T, \Lambda_n) \in \mathcal{Q}_n) \leq \prob(\ARD_n(T, \Lambda_n') \in \mathcal{Q}_n)$.
        \item If $\mathcal{Q}_n$ is decreasing, then $\prob(\ARD_n(T, \Lambda_n) \in \mathcal{Q}_n) \geq \prob(\ARD_n(T, \Lambda_n') \in \mathcal{Q}_n)$.
    \end{enumerate}
\end{lemma}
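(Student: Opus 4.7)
The plan is to prove Lemma~\ref{lem:monotonicity ARD} via a direct coupling: I will build $\ARD_n(T,\Lambda_n)$ and $\ARD_n(T,\Lambda_n')$ on a common probability space so that, with probability one, the first is a sub-graph (in the marked sense from Definition~\ref{def:monotone events}) of the second. Once that coupling is in hand, the lemma follows immediately from the definition of monotonicity.

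To build the coupling, first draw a single i.i.d.\ sequence of types $T_1,\ldots,T_n$ from $T$ and use this identical sequence for both models; this fixes the vertex set $[n]$ and the marks on the vertices, so condition (1) of the marked sub-graph relation holds by construction. For each type pair $(t,s)\in\mathcal{S}\times\mathcal{S}$, set
\[
E_{ts} := (V_t\times V_s)\setminus\{(v,v):v\in[n]\},
\]
and draw a single uniformly random permutation $\sigma_{ts}$ of $E_{ts}$, independently across pairs $(t,s)$. Declare that the arcs of $\ARD_n(T,\Lambda_n)$ between types $(t,s)$ are the first $m := \min\{\Lambda_n(t,s),|E_{ts}|\}$ entries of $\sigma_{ts}$, and that the arcs of $\ARD_n(T,\Lambda_n')$ between types $(t,s)$ are the first $m' := \min\{\Lambda_n'(t,s),|E_{ts}|\}$ entries of the same permutation.

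Next I would verify the marginals. Taking the first $k$ entries of a uniformly random permutation of a finite set yields a uniformly chosen $k$-subset, and when $k$ exceeds the size of the set the prescription returns the whole set; this matches exactly the arc-assignment procedure in Section~\ref{sec:ARD}. Independence across type pairs matches the independence built into the $\ARD$ construction. Hence each of the two marginals is a bona fide $\ARD_n$ with the prescribed arc-count function. Because $\Lambda_n(t,s)\le\Lambda_n'(t,s)$ forces $m\le m'$ for every pair, the arcs of the first graph are contained in those of the second, so condition (2) of the marked sub-graph relation holds. Therefore $\ARD_n(T,\Lambda_n)\subseteq \ARD_n(T,\Lambda_n')$ almost surely.

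Finally, if $\mathcal{Q}_n$ is increasing, the sub-graph inclusion yields the event inclusion $\{\ARD_n(T,\Lambda_n)\in\mathcal{Q}_n\}\subseteq\{\ARD_n(T,\Lambda_n')\in\mathcal{Q}_n\}$, so taking probabilities gives statement 1; the decreasing case is symmetric and yields statement 2. There is no real obstacle here: the only subtlety is making sure the coupling respects the ``take the entire set if too small'' clause in the $\ARD$ definition, which is the reason I sample a single permutation of $E_{ts}$ rather than two separate uniform subsets, as this handles both the generic and degenerate regimes uniformly.
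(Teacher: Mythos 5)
Your proposal is correct and follows essentially the same route as the paper: both construct a coupling in which the two models share the same type sequence and the arc set of $\ARD_n(T,\Lambda_n)$ is nested inside that of $\ARD_n(T,\Lambda_n')$, the only cosmetic difference being that you take prefixes of a single uniform permutation of $E_{ts}$ whereas the paper samples $\Lambda_n(t,s)$ pairs and then $\Lambda_n'(t,s)-\Lambda_n(t,s)$ further pairs without replacement, which are equivalent constructions. Your explicit handling of the degenerate case where $\Lambda_n'(t,s)$ exceeds $|E_{ts}|$ is a small but welcome addition.
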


We are now in a position to proof Theorem~\ref{thm:IRD to ARD}. We will only give the proof for $\mathcal{Q}_n$ that are increasing. The proof for decreasing events will be analogous. The proofs of all the lemmas above are postponed until Section~\ref{sec:tool proof thm I}.

\begin{proof}[\textbf{Proof of Theorem~\ref{thm:IRD to ARD}}]
    Let $\mathcal{Q}_n$ be some increasing event, and define two kernels $\kappa_n^\pm$ such that for $t, s \in \mathcal{S}$ we have
    \[
    \kappa_n^\pm(t, s) = \begin{cases}\max\left\{ \kappa(t, s) \pm C n^{-1/2 + \alpha} / \sqrt{q_t q_s}, 0 \right\}, & \text{if } t \text{ and } s \text{ are stable},\\
    0, & \text{ else.} \end{cases}
    \]
Recall that the target arc-count function $\Lambda_n$ is given by \[\Lambda_n(t, s) = \begin{cases} \lfloor \kappa(t, s) q_t q_s n\rfloor, & \text{if } t \text{ and } s \text{ is stable,}\\ 0, & \text{else.} \end{cases}\] The proof will consist of four steps.
\begin{itemize}
    \item[Ia.] We show that $\prob(\ARD_n(T, \Lambda_n) \in \mathcal{Q}_n) \leq \prob(\IRD_n(T, \kappa_n^+) \in \mathcal{Q}_n) + \xi_n$ using Lemma~\ref{lem:monotonicity ARD}. Here, $\xi_n$ is an error term involving mixed-binomial deviations.
    \item[Ib.] We show that $\prob(\IRD_n(T, \kappa_n^-) \in \mathcal{Q}_n) \leq \prob(\ARD_n(T, \Lambda_n') \in \mathcal{Q}_n) + \zeta_n$ again using Lemma~\ref{lem:monotonicity ARD}. Here, $\zeta_n$ is another error-term involving mixed-binomial deviations.
    \item[IIa.] We show that $\xi_n \to 0$ using Assumption~\ref{ass:kernel bound}, Lemma~\ref{lem:no stable} and Lemma~\ref{lem:undershoot mixed binomials}.
    \item[IIb.] We show that $\zeta_n \to 0$ using Assumption~\ref{ass:kernel bound}, Lemma~\ref{lem:no stable}, and Lemma~\ref{lem:overshoot mixed binomials}.
\end{itemize}
Together, Step Ia and IIa show with the convergence assumption on $\IRD_n(T, \kappa_n^+)$ and Assumption~\ref{ass:kernel bound} on $\kappa$ that
\[
\limsup_{n\to\infty} \prob(\ARD_n(T, \Lambda_n) \in \mathcal{Q}_n) \leq \limsup_{n\to \infty} \prob(\IRD_n(T, \kappa_n^+) \in \mathcal{Q}_n) + \limsup_{n \to \infty}\xi_n = p + 0.
\]
Similarly, Steps Ib and IIb show with the convergence assumption on $\IRD_n(T, \kappa_n^-)$ and Assumption~\ref{ass:kernel bound} on $\kappa$ that
\[
\liminf_{n \to \infty} \prob(\IRD_n(T, \kappa^-_n) \in \mathcal{Q}_n) \leq \liminf_{n \to \infty} \prob(\ARD_n(T, \Lambda_n) \in \mathcal{Q}_n) + \liminf_{n \to \infty} \zeta_n,
\]
so that
\[
p \leq \liminf_{n \to \infty} \prob(\ARD_n(T, \Lambda_n) \in \mathcal{Q}_n) \leq \limsup_{n\to \infty}\prob(\ARD_n(T, \Lambda_n) \in \mathcal{Q}_n) \leq p.
\]
This shows the desired result if Steps I--II are true. We will now show this.

\paragraph{Step Ia.} We will set $A_n^+$ to be the arc-to-vertex-types function of $\IRD_n(T, \kappa_n^+)$. We will use the law of total probability to integrate over all possible realisations of $A_n^+$. 
\[
\prob(\IRD_n(T, \kappa^+_n) \in \mathcal{Q}_n) = \sum_{\Lambda_n'} \prob(\IRD_n(T, \kappa^+_n) \in \mathcal{Q}_n | A_n^+ = \Lambda_n') \prob\left( \bigcap_{t, s \in \mathcal{S}} \{A^+_n(t, s) = \Lambda_n'(t, s)\} \right)
\]
We will now define the set $\mathcal{L}^+_n := \{\Lambda'_n : \Lambda_n'(t, s) \geq \Lambda_n(t, s) \text{ for all } t, s \in \mathcal{S}\}$, and bound the sum by only considering $\Lambda_n'$ that fall in the set $\mathcal{L}_n^+$. We will also use the fact that $\IRD$ conditioned on $A_n^+$ equals $\ARD$. This yields
\[
\prob(\IRD_n(T, \kappa^+_n) \in \mathcal{Q}_n ) \geq \sum_{\Lambda_n' \in \mathcal{L}_n^+} \prob(\ARD_n(T, \Lambda_n') \in \mathcal{Q}_n ) \prob\left( \bigcap_{t,s \in \mathcal{S}} \{A^+_n(t, s) = \Lambda_n'(t, s)\} \right).
\]
In $\ARD_n(T, \Lambda_n')$ note that the value of $\Lambda_n'(t, s)$ does not matter when either $t$ or $s$ is a vertex type that does not appear in the graph. These excess arcs will be deleted in the end anyways when one generates $\ARD$. Now, we can use the fact that $\mathcal{Q}_n$ is increasing together with Lemma~\ref{lem:monotonicity ARD} to further lower bound the first probability in the product by considering $\ARD_n(T, \Lambda_n)$ instead of $\ARD_n(T, \Lambda_n')$.
\begin{align}
\prob(\IRD_n(T, \kappa^+_n) \in \mathcal{Q}_n) &\geq \prob(\ARD_n(T, \Lambda_n) \in \mathcal{Q}_n )  \sum_{\Lambda_n' \in \mathcal{L}_n^+} \prob\left(\bigcap_{t, s \in \mathcal{S}} \{A^+_n(t, s) = \Lambda_n'(t, s)\}  \right), \nonumber \\
&= \prob(\ARD_n(T, \Lambda_n) \in \mathcal{Q}_n )  \prob\left( \bigcap_{t, s \in \mathcal{S}} \{A^+_n(t, s) \geq \Lambda_n(t, s)\}  \right). \label{eq:ARD vs arc gen split}
\end{align}
Note that \eqref{eq:ARD vs arc gen split} is the split we alluded to in the strategy. We will now continue to lower bound the probability involving the arc-to-vertex-type function. To do this, we will first rewrite this probability by using the complement rule, de Morgan's laws, and the union bound
\[
\prob(\IRD_n(T, \kappa^+_n) \in \mathcal{Q}_n) \geq \prob(\ARD_n(T, \Lambda_n) \in \mathcal{Q}_n ) - \sum_{t = 1}^\infty \sum_{s = 1}^\infty\prob\left(A_n^+(t, s) < \Lambda_n(t, s)  \right).
\]
Finally, moving the error-terms to the other side of the inequality yields the result we sought to obtain from this step.
\begin{equation}\label{eq:proof I step I}
    \prob(\ARD_n(T, \Lambda_n) \in \mathcal{Q}_n ) \leq \prob(\IRD_n(T, \kappa^+_n) \in \mathcal{Q}_n) + \underbrace{\sum_{t = 1}^{\infty} \sum_{s = 1}^{\infty}\prob\left(A_n^+(t, s) < \Lambda_n(t, s) \right) }_{\xi_n} .
\end{equation}

\paragraph{Step Ib.} Similar to Step Ia we use the law of total probability to condition on all possible realisations of the arc-to-vertex-type function. For $\IRD_n(T, \kappa^-_n)$ we will denote the arc-to-vertex-type function by $A_n^-$. This yields
\begin{equation}\label{eq:law total prob step 2 thm 1}
   \prob(\IRD_n(T, \kappa^-_n) \in \mathcal{Q}_n) = \sum_{\Lambda_n'} \prob(\IRD_n(T, \kappa^-_n) \in \mathcal{Q}_n \;|\; A_n^- = \Lambda_n') \prob\left( \bigcap_{t, s} \{A^-_n(t, s) = \Lambda_n'(t, s) \} \right). 
\end{equation}
The idea of this proof step is now to start inductively chiselling away individual vertex-type pairs from \eqref{eq:law total prob step 2 thm 1} to find the desired bound. We will show the main approach by considering the induction base $t = s = 1$. In this base case, split up the sum into the part where $\Lambda'_n(1, 1) \leq \Lambda_n(1, 1)$ and the part where $\Lambda'_n(1, 1) > \Lambda_n(1, 1)$. In the computation below, we will also apply the link between $\ARD$ and $\IRD$.
\begin{align*}
     \prob(\IRD_n(T, \kappa^-_n) \in \mathcal{Q}_n) = &\sum_{\Lambda'_n(1, 1) \leq \Lambda_n(1, 1)} \prob(\ARD_n(T, \Lambda_n') \in \mathcal{Q}_n ) \prob\left( \bigcap_{t, s} \{A^-_n(t, s) = \Lambda_n'(t, s) \} \right)\\
     &+\sum_{\Lambda'_n(1, 1) > \Lambda_n(1, 1)} \prob(\ARD_n(T, \Lambda_n') \in \mathcal{Q}_n) \prob\left( \bigcap_{t, s} \{A^-_n(t, s) = \Lambda_n'(t, s) \} \right).
\end{align*}
For the second sum, we can simply bound the $\IRD$ probability by one to find
\begin{align*}
     \prob(\IRD_n(T, \kappa^-_n) \in \mathcal{Q}_n) \leq &\sum_{\Lambda'_n(1, 1) \leq \Lambda_n(1, 1)} \prob(\ARD_n(T, \Lambda_n') \in \mathcal{Q}_n) \prob\left( \bigcap_{t, s} \{A^-_n(t, s) = \Lambda_n'(t, s) \} \right)\\
     &+ \prob\left(A^-_n(1, 1) > \Lambda_n(1, 1)  \right).
\end{align*}
Next, we show the idea of the induction step by considering the case $t = 1$ and $s = 2$. Like before, we split up the sum in the part where $\Lambda'_n(1, 2) \leq \Lambda_n(1, 2)$ and the part where $\Lambda'_n(1, 2) > \Lambda_n(1, 2)$.
\begin{align*}
     \prob(\IRD_n(T, \kappa^-_n) \in \mathcal{Q}_n) \leq &\sum_{\substack{\Lambda'_n(1, 1) \leq \Lambda_n(1, 1) \\ \Lambda'_n(1, 2) \leq \Lambda_n(1, 2) }} \prob(\ARD_n(T, \Lambda_n') \in \mathcal{Q}_n) \prob\left( \bigcap_{t, s} \{A^-_n(t, s) = \Lambda_n'(t, s) \} \right)\\
     &+ \sum_{\substack{\Lambda'_n(1, 1) \leq \Lambda_n(1, 1) \\ \Lambda'_n(1, 2) > \Lambda_n(1, 2) }} \prob(\ARD_n(T, \Lambda_n') \in \mathcal{Q}_n) \prob\left( \bigcap_{t, s} \{A^-_n(t, s) = \Lambda_n'(t, s) \} \right)\\
     &+ \prob\left(A^-_n(1, 1) > \Lambda_n(1, 1)  \right).
\end{align*}
In the second sum, we will add all the removed terms involving $\Lambda_n'(1, 1)$ again, creating an upper-bound. Then, as in the base case, we bound the $\IRD$ probability by one to remove the sum. We find
\begin{align*}
     \prob(\IRD_n(T, \kappa^-_n) \in \mathcal{Q}_n) \leq &\sum_{\substack{\Lambda'_n(1, 1) \leq \Lambda_n(1, 1) \\ \Lambda'_n(1, 2) \leq \Lambda_n(1, 2) }} \prob(\ARD_n(T, \Lambda_n') \in \mathcal{Q}_n) \prob\left( \bigcap_{t, s} \{A^-_n(t, s) = \Lambda_n'(t, s) \} \right)\\
     &+ \prob(A_n^-(1, 2) > \Lambda_n(1, 2)) + \prob\left(A^-_n(1, 1) > \Lambda_n(1, 1)  \right).
\end{align*}
Similar to Step Ia, now define the set $\mathcal{L}_n^- := \{\Lambda_n' : \Lambda_n'(t, s) \leq \Lambda_n(t, s) \text{ for all } t, s \in \mathcal{S}\}$. We can repeat the previous argumentation for all pairs of vertex-types to find
\begin{align*}
     \prob(\IRD_n(T, \kappa^-_n) \in \mathcal{Q}_n) \leq &\sum_{\Lambda_n' \in \mathcal{L}_n^-} \prob(\ARD_n(T, \Lambda_n') \in \mathcal{Q}_n) \prob\left( \bigcap_{t, s} \{A^-_n(t, s) = \Lambda_n'(t, s) \} \right)\\
     &+ \sum_{t = 1}^\infty \sum_{s = 1}^\infty \prob\left(A^-_n(t, s) > \Lambda_n(t, s)  \right).
\end{align*}
Now, we apply Lemma~\ref{lem:monotonicity ARD} to upper bound the $\ARD$ probability by noting for all $\Lambda_n' \in \mathcal{L}_n^-$ that $\Lambda_n$ is greater or equal to it for all vertex-types. After bounding, we take the $\ARD$ probability out of the sum, and note that the resulting sum bounded by one. We find the desired outcome of this step.
\[
 \prob(\IRD_n(T, \kappa^-_n) \in \mathcal{Q}_n) \leq \prob(\ARD_n(T, \Lambda_n) \in \mathcal{Q}_n) + \underbrace{\sum_{t = 1}^\infty \sum_{s = 1}^\infty \prob\left(A^-_n(t, s) > \Lambda_n(t, s)  \right)}_{\zeta_n}.
\]

\paragraph{Step IIa.} To show that $\xi_n$ converges to zero, we first recall that $A_n^+(t, s) \geq 0$. Thus, through the definition of $\Lambda_n(t, s)$ we have for every $t > u_n^\uparrow$ or $s > u_n^\uparrow$ that
\[
\prob\left(A_n^+(t, s) < \Lambda_n(t, s) \right) = \prob\left(A_n^+(t, s) < 0 \right) = 0.
\]
Hence, $\xi_n$ simplifies into
\[
\xi_n = \sum_{t = 1}^{u_n^\uparrow} \sum_{s = 1}^{u_n^\uparrow}\prob\left(A_n^+(t, s) < \Lambda_n(t, s) \right).
\]
Thus, through Lemma~\ref{lem:undershoot mixed binomials} we have for any $r > 0$ that
\[
\xi_n \leq 2 u_n^\uparrow(\tau)^2 \exp(-\log(n)^2 / 2).
\]
Now, by applying Lemma~\ref{lem:no stable} we find for some $\delta > 0$ that
\[
4 u_n^\uparrow(\tau)^2 \exp(-\log(n)^2 / 2) \leq 4 n^{2(1 - \tau)/(1 + \delta)} \exp(-\log(n)^2 / 2) .
\]
Thus, we find indeed that $\xi_n \to 0$.

\paragraph{Step IIb.} We note from our definition of $\kappa^-_n$ that for an unstable vertex type $t$ or $s$ we have that $\kappa^-_n(t, s) = 0$. This implies that $A_n^-(t, s) = 0$ as well, meaning
\[
\prob(A_n^-(t, s) > \Lambda_n(t, s)) = \prob(A_n^-(t, s) > 0) = 0.
\]
Thus, we may write 
\[
\zeta_n = \sum_{t = 1}^{u_n^\uparrow} \sum_{s = 1}^{u_n^\uparrow} \prob(A_n^-(t, s) > \Lambda_n(t, s)).
\]
Similar to Step IIa, we will apply the bound from Lemma~\ref{lem:overshoot mixed binomials} and \ref{lem:no stable} to obtain
\[
\zeta_n \leq 4 n^{2(1 - \tau)/(1 + \delta)} \exp(-\log(n)^2 / 2) .
\]
Hence, we also have that $\zeta_n \to 0$. Taking Step I and II together, we have found the desired result.
\end{proof}

\begin{remark}
    Note that Lemma~\ref{lem:well-concentrated types} does not play a direct role in the proof of Theorem~\ref{thm:IRD to ARD}. This is because it is a prerequisite for the proof of Lemma~\ref{lem:undershoot mixed binomials} and \ref{lem:overshoot mixed binomials}. 
\end{remark}

\begin{remark}
    In the proof of Theorem~\ref{thm:IRD to ARD} we saw that the difficulty of each step is present in different places. For example, we saw that the strategy to derive the upper bound in Step Ia needed much more careful use of the law of total probability, while in Step Ib the difficulty lied in carefully cutting away the correct error-terms after applying the law of total probability. If we were to also provide the explicit proof for decreasing $\mathcal{Q}_n$, all difficulties would flip. So, the careful cutting would happen in Step Ia while the careful application of the law of total probability would happen in Step Ib.
\end{remark}

\begin{remark}
    Theorem~\ref{thm:IRD to ARD} shows how a result from $\IRD$ can be translated to $\ARD$. One can formulate a similar theorem that would show how results from $\ARD$ can be translated to $\IRD$. For the Erd\H{o}s-R\'enyi and Gilbert model, theorems for both directions of equivalence can be found in \cite{Janson2000RandomGraphs}. The proof for translation from $\ARD$ to $\IRD$ would use similar techniques as the proof of Theorem~\ref{thm:IRD to ARD}. Like the equivalent result in \cite{Janson2000RandomGraphs}, we expect monotonicity of $\mathcal{Q}_n$ is not required anymore if we tranlsate from $\ARD$ to $\IRD$.
\end{remark}

\subsection{Heuristics behind Theorem~\ref{thm:CCI to IRD}}\label{sec:heuristics II}
We want to relate $\CCI$ to $\IRD$ using Theorem~\ref{thm:IRD to ARD}. However, recall that in $\CCI$ there is no knowledge on which arcs are assigned to which vertex-types. We only know something about the potential vertex-types each arc can be placed in between. Thus, if we would like to apply Theorem~\ref{thm:IRD to ARD} for $\CCI$, we would need to reveal the vertex-types each arc in $\CCI$ is going to be placed in between. Then, we are back to the setting of $\ARD$, meaning our main result can be applied. To this end, it is instructive to introduce a random function $\bar{A}_n : \mathcal{S} \times \mathcal{S} \to \mathbb{N}$ that counts in $\CCI$ the amount of arcs that are placed in between two given vertex-types. We will call this function the \emph{vertex-type arc count function}.

\begin{definition}[Vertex-type arc count function]\label{def:vertex type arc count}
    In $\CCI_{n, \mu}(T, C, I, J) = ([n], E)$, after removing self-loops and multi-arcs, we define the function $\bar{A}_n : \mathcal{S} \times \mathcal{S} \to \mathbb{N}$ for two fixed vertex-types $t, s \in \mathcal{S}$ as
    \[
    \bar{A}_n(t, s) := \left| (v, w) \in E : T_v = t \text{ and } T_w = s   \right|.
    \]
\end{definition}

By noting that arcs are placed between vertices uniformly, and by noting that uniform distributions conditioned on a subset of their support are still uniform, we can conclude that $\CCI$ conditioned on $\bar{A}_n$ is equivalent to $\ARD$. Hence, the key to connecting $\CCI$ to $\IRD$ will be to first show $\bar{A}_n$ concentrates, and then applying Theorem~\ref{thm:IRD to ARD}. 

Similar to the heuristics in Section~\ref{sec:heuristics thm 1}, we want to show that $\bar{A}_n$ concentrates around its mean. Recall from Section~\ref{sec:CCI is IRD statement} that $\kappa(t, s) / n$, with $\kappa$ given by \eqref{eq:asymp connection number}, can be interpreted as the expected number of arcs that connect a specific vertex of type $t$ with a specific vertex of type $s$. Hence, by noting that there will be roughly $n q_t$ vertices with type $t$ and $n q_s$ vertices with type $s$, we can deduce from this that we expect $\kappa(t, s) n^2 q_t q_s$ arcs to be placed from vertices of type $t$ to vertices of type $s$. Thus, to apply Theorem~\ref{thm:IRD to ARD} we will show that
\begin{equation}\label{eq:A bar expec}
\bar{A}_n(t, s) \approx \sum_{i = 1}^\infty \sum_{j = 1}^\infty \frac{n p_{ij} \mu \cdot n q_t \cdot n q_s \cdot I(t, i) J(s, j)}{n\lambda_i \cdot n\varrho_j} = n \kappa(t, s) q_t q_s \approx \lfloor n \kappa(t, s) q_t q_s \rfloor. 
\end{equation}

To show the above concentration, though, there is one detail that deserves special attention. Looking at Defintion~\ref{def:vertex type arc count} we see that $\bar{A}_n$ counts the arcs between two vertex-types \emph{after removing self-loops and multi-arcs}. This highlights another big difference between $\CCI$ and $\ARD$ we will have to cope with: $\ARD$ will not produce self-loops and multi-arcs, while $\CCI$ might. The connection between the two can only be made once we remove these, and hence we will need to show that $\CCI$ after erasing self-loops and multi-arcs still approximately equals $\CCI$ before doing so.

\begin{figure}
    \centering
    \includegraphics[scale = 0.5]{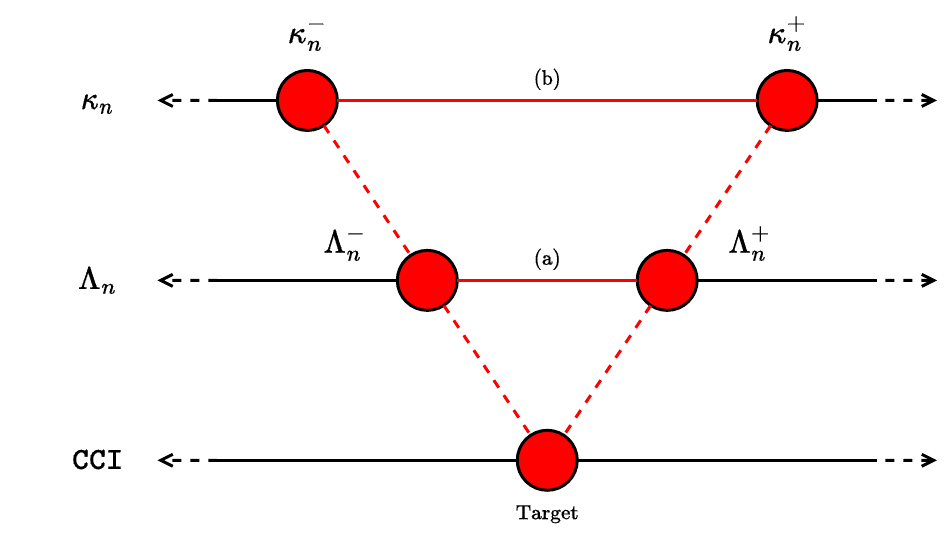}
    \caption{\emph{The heuristics behind the proof of Theorem~\ref{thm:CCI to IRD}. We want to identify $\CCI$ with given parameters as $\IRD$, so we will have to (a) remove self-loops and multi-edges, and assign arcs to vertex-types. This brings us in the setting of $\ARD$. Thereafter, we will have to (b) apply Theorem~\ref{thm:IRD to ARD} to get into the setting of $\IRD$.}}
    \label{fig:proof idea II}
\end{figure}

Figure~\ref{fig:proof idea II} showcases the structure of the proof based on the previous discussion. In essence, the new tools that we will develop create the link between $\CCI$ and $\ARD$. In this link, the concept of well-concentrated vertices will be used slightly differently. This is needed, since we lose a big benefit in $\CCI$ that $\IRD$ and $\ARD$ both had. In these two models we could consider the graph generation process for each pair of vertex-types independently of the other pairs. This is why the concept of well-concentrated vertices was defined in terms of pairs of two fixed vertex-types. However, in $\CCI$ it is possible that one arc type could possible connect to all vertex-types simultaneously. Thus, if vertices are well-concentrated in this setting, then a significantly big number of vertex-types should be close around their mean \emph{at the same time}. The strategy to prove Theorem~\ref{thm:CCI to IRD} will have the following steps. These are mirrored in the actual proof in Section~\ref{sec:proof II}.

\paragraph{Step I.} We show that the realisation of $\CCI$ restricted to \emph{super}-stable vertex-types is similar to the $\ARD$ model restricted to \emph{super}-stable vertex-types. 

\paragraph{Step II.} We show that the realisation of $\CCI$ restricted to stable vertex-types is similar to the $\ARD$ model restricted to stable vertex-types.

\paragraph{Step III.} We show that every instance of $\ARD$ we have created has an input function $\Lambda_n$ for which the corresponding form of $\kappa_n$ in Theorem~\ref{thm:IRD to ARD} adheres to \eqref{eq:kappa seq bound}.

\subsection{Proof of Theorem~\ref{thm:CCI to IRD}}\label{sec:proof II}
To prove Theorem~\ref{thm:CCI to IRD}, we will often rely on direct consequence of Assumption~\ref{ass:CCI}, which is that the kernel belonging to $\CCI$ is bounded. Although not necessarily needed to derive our results (cf. the remark after Theorem~\ref{thm:CCI to IRD}), this fact will greatly simplify the proofs of the other techniques we will use, since we always just replace kernel values by a constant.
\begin{lemma}\label{lem:kappa bounded}
    Under Assumption~\ref{ass:CCI} the kernel $\kappa$ in \eqref{eq:asymp connection number} is bounded.
\end{lemma}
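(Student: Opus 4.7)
The plan is to exploit the fact that each summand in \eqref{eq:asymp connection number} vanishes unless $I(t,i)=1$ and $J(s,j)=1$, and that whenever those indicators fire, the denominators $\lambda_i, \varrho_j$ must themselves be strictly positive. Combining this with condition II of Assumption~\ref{ass:CCI} will give us a bound that is uniform in $t$ and $s$, and in fact does not require condition I at all.

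More concretely, set $\lambda^\star := \inf_i\{\lambda_i : \lambda_i > 0\}$ and $\varrho^\star := \inf_j\{\varrho_j : \varrho_j > 0\}$; by Assumption~\ref{ass:CCI}.II both are strictly positive. First I would observe that if $I(t,i) = 1$ for some $t \in \mathcal{S}$, then from \eqref{eq:fraction of vertices connecting to lig} we have
\[
\lambda_i \;=\; \sum_{k=1}^\infty q_k I(k,i) \;\geq\; q_t I(t,i) \;=\; q_t \;>\; 0,
\]
so in particular $\lambda_i \geq \lambda^\star$. The analogous inequality $\varrho_j \geq \varrho^\star$ holds whenever $J(s,j)=1$.

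Next I would plug these lower bounds into \eqref{eq:asymp connection number}. Since every nonzero summand in the double sum has $I(t,i)J(s,j) = 1$, the denominator in each such summand is at least $\lambda^\star \varrho^\star$, giving
\[
\kappa(t,s) \;\leq\; \frac{\mu}{\lambda^\star \varrho^\star} \sum_{i=1}^\infty \sum_{j=1}^\infty p_{ij} \;=\; \frac{\mu}{\lambda^\star \varrho^\star},
\]
uniformly in $t,s \in \mathcal{S}$, which is the desired boundedness.

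There is essentially no obstacle here — the only subtle point is ensuring that one only needs to bound the \emph{positive} values of $\lambda_i$ and $\varrho_j$, which is exactly what the indicators $I(t,i)$ and $J(s,j)$ inside the sum achieve. This explains the remark following Theorem~\ref{thm:CCI to IRD}: condition II could be weakened if one were willing to impose a direct growth bound on $\kappa$, since its only role in subsequent arguments is to supply this uniform bound.
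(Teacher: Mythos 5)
Your proof is correct and follows essentially the same route as the paper: bound the denominators of the nonzero summands below by $\inf_i\{\lambda_i : \lambda_i > 0\} \cdot \inf_j\{\varrho_j : \varrho_j > 0\}$, which is positive by Assumption~\ref{ass:CCI}.II, and then sum the $p_{ij}$ to one, yielding the uniform bound $\mu/(\lambda^\star\varrho^\star)$. The extra observation that the indicators force $\lambda_i,\varrho_j$ to be among the strictly positive values is a nice explicit justification of a step the paper leaves implicit.
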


As alluded to in Section~\ref{sec:heuristics II}, we will need to refine our notion of being well concentrated. Specifically, we will require that well-concentration is true for all vertex-types simultaneously until some upper-bound vertex-type that scales with $u_n^\uparrow$ (cf. Definition~\ref{def:stable vertices}). Like in Definition~\ref{def:well-concentrated types}, this expanded notion of well-concentration will be satisfied with high probability in our models, since for for each individual vertex-type well-concentration is true with super-polynomial probability (cf. Lemma~\ref{lem:well-concentrated types}), and since the number of stable vertex-types grows only polynomially (cf. Lemma~\ref{lem:no stable}). 

Apart from upgrading our notion of concentration, it will also be helpful to upgrade our notion of stability. Specifically, we will define a class of vertex-types that adhere to stricter stability requirements, which we will need for some of our proofs. We will call these vertex-types \emph{super stable}. Their definition will be based on Definition~\ref{def:stable vertices}.

\begin{definition}[Super stable vertices]\label{def:super stable vertices}
    Recall the definition of stable vertex-types (cf. Definition~\ref{def:stable vertices}), and note particularly its influence on the tolerance $\tau$. We say a vertex-type is super stable when $\tau > 1/2$. 
\end{definition}

We will see that for super stable vertex-types there are relatively few self-loops and multi-arcs in $\CCI$. Moreover, like for the previous concept of stability, we will show that the occurrence of vertex-types that are not super stable is rather rare. When analysing $\CCI$, this fact will allow us to split up the graph into super-stable vertices which exhibit nice properties, and other vertices that occur so little that they might as well be removed from the graph. The extra result on stability that states vertices which are not super stable occur only a little is given below.

\begin{lemma}[Probability of unstable vertex-types]\label{prob:unstable}
    Suppose that $\expec[T^\delta] < \infty$ for some $\delta > 0$ Then, for $n$ large and all $r \in (0, \delta)$ there exists a constant $\widehat{C}_r > 0$ such that \[\prob(T > u_n^\uparrow(\tau)) \leq \widehat{C}_r \cdot n^{\frac{(\tau - 1)(\delta - r)}{(1 + \delta)}}.\]
    In other words, vertex-types that are not stable at tolerance $\tau$ occur with only vanishing probability.
\end{lemma}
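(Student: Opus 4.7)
The plan is to bound the tail sum $\prob(T > u_n^\uparrow(\tau)) = \sum_{s > u_n^\uparrow(\tau)} q_s$ by splitting it at a threshold chosen on the scale given by Lemma~\ref{lem:no stable}, namely $M_n := \lceil n^{(1-\tau)/(1+\delta)} \rceil$. For $n$ large that lemma gives $u_n^\uparrow(\tau) \leq M_n$, so that
\[
\prob(T > u_n^\uparrow(\tau)) \;\leq\; \sum_{u_n^\uparrow(\tau) < s \leq M_n} q_s \;+\; \prob(T > M_n)
\]
is a well-defined decomposition. The two pieces can now be attacked with the two qualitative facts available to us: the pointwise bound $q_s < n^{-1+\tau}$ valid for all $s \geq u_n^\uparrow(\tau)$ (directly from the definition), and the moment bound $\expec[T^\delta] < \infty$ from the hypothesis.

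For the first piece I would use the pointwise bound on each $q_s$ together with the observation that there are at most $M_n$ summands, yielding a contribution of at most $M_n \cdot n^{-1+\tau} = O(n^{(\tau-1)\delta/(1+\delta)})$. For the tail $\prob(T > M_n)$ I would apply Markov's inequality to $T^{\delta-r}$; this is legitimate because $\expec[T^{\delta-r}] \leq \expec[T^\delta]^{(\delta-r)/\delta} < \infty$ by Jensen's inequality (the exponent $(\delta-r)/\delta$ lies in $(0,1)$). This gives
\[
\prob(T > M_n) \;\leq\; \frac{\expec[T^{\delta-r}]}{M_n^{\delta-r}} \;\leq\; \expec[T^{\delta-r}] \cdot n^{(\tau-1)(\delta-r)/(1+\delta)}.
\]
Because $\tau-1 < 0$ and $\delta - r < \delta$, the rate $n^{(\tau-1)(\delta-r)/(1+\delta)}$ decays strictly slower than $n^{(\tau-1)\delta/(1+\delta)}$, so it dominates in the asymptotic sum. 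Setting $\widehat{C}_r := 1 + \expec[T^{\delta-r}]$ then yields the claimed bound.

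The main subtlety I expect is the choice of threshold: the exponent $(1-\tau)/(1+\delta)$ is exactly what is needed to balance the two pieces against each other, and is dictated by Lemma~\ref{lem:no stable}. A more direct interpolation argument writing $q_s = q_s^\alpha \cdot q_s^{1-\alpha}$ and bounding the first factor by $(n^{-1+\tau})^\alpha$ and the second via Markov would force a tail series $\sum_{s>u_n^\uparrow} s^{-\delta(1-\alpha)}$ whose convergence requires $\delta > 1$, so it fails in the regime $\delta \leq 1$; the split argument above works uniformly in $\delta > 0$, which is why I prefer it.
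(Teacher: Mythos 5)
Your proof is correct, but it follows a genuinely different route from the paper. The paper never splits the tail at a threshold: it writes each term as $q_t = q_t^{(\delta-r)/(1+\delta)}\, q_t^{(1+r)/(1+\delta)}$, bounds the first factor uniformly by $\bigl(n^{-1+\tau}\bigr)^{(\delta-r)/(1+\delta)}$ using the defining property of $u_n^\uparrow(\tau)$, and then argues that $\sum_t q_t^{(1+r)/(1+\delta)}$ is a finite constant because $\expec[T^\delta]<\infty$ forces $q_t < t^{-1-\delta}$ for $t$ large, whence $q_t^{(1+r)/(1+\delta)} < t^{-1-r}$. Your argument instead splits at $M_n=\lceil n^{(1-\tau)/(1+\delta)}\rceil$ (imported from Lemma~\ref{lem:no stable}), handles the middle block by counting at most $M_n$ terms each below $n^{-1+\tau}$, and handles the far tail by Markov applied to $T^{\delta-r}$, with finiteness of $\expec[T^{\delta-r}]$ secured by Jensen; the exponent bookkeeping and the absorption of the faster-decaying middle block into the constant for $n$ large are all correct. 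What each approach buys: the paper's proof is shorter and does not need Lemma~\ref{lem:no stable}, but it leans on the pointwise decay claim $q_t< t^{-1-\delta}$ eventually, which is a slightly delicate reading of the moment hypothesis (Markov alone only yields $q_t\le \expec[T^\delta]\,t^{-\delta}$, and your closing remark correctly identifies that completing the interpolation with only that bound would restrict the admissible $\delta$ and $r$); your version replaces that step by Markov and Jensen, which are unobjectionable, at the price of invoking Lemma~\ref{lem:no stable} — legitimate, since it is stated and proved earlier in the paper, though you should be aware that the paper's proof of that lemma rests on the same decay claim, so your argument is not entirely independent of it.
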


With the concept of super stability we can compute the number of self-loops (connections from a vertex to itself) and multi-arcs (more than one of the same directed connection between two vertices). We will show that both the number of self-loops and multi-arcs within each pair of vertex-types is sub-linear. Specifically, if we consider super stable vertex-types at tolerance $\tau > 1/2$, then we will show that the number of self-loops and multi-arcs is bounded by $n^{2(1 - \tau)}$. Observe that this bound becomes trivial when $\tau \leq 1/2$, since $\CCI$ will have $\lambda n$ arcs.

\begin{lemma}[Number of self-loops]\label{lem:self-loops}
    Fix a $\tau \in (0, 1)$. Denote by $S_t$ the number of self-loops on vertices with type $t \in \mathcal{S}$. We have for all $r > 0$ and $\nu < \tau$ that
    \[
    \prob\left( \bigcup_{t = 1}^{u_n^\uparrow(\tau)} \{S_t > n^{1 - \nu}\} \right) \leq 1/n^r.
    \]
\end{lemma}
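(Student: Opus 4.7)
The plan is to exploit the fact that, in $\CCI$, the $\lfloor \mu n \rfloor$ arcs are placed \emph{independently} of one another conditional on the vertex-type assignment $(T_v)_{v \in [n]}$. Thus, for each fixed $t$, the count
\[
S_t = \sum_{a = 1}^{\lfloor \mu n \rfloor} X_{a, t}, \qquad X_{a, t} := \1\{\text{arc } a \text{ is a self-loop at some vertex of type } t\},
\]
is a sum of independent Bernoulli random variables conditional on the types, which puts us in a regime where Chernoff-type concentration applies. The overall scheme is: (i) bound the conditional mean $\expec[S_t \mid (T_v)_v]$ by a universal constant on a high-probability event; (ii) apply Chernoff to deduce a super-polynomially small upper bound on $\prob(S_t > n^{1 - \nu})$ for each $t$; and (iii) take a union bound over the polynomially many stable types $t \leq u_n^\uparrow(\tau)$.

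For the mean, Steps 3--5 of the $\CCI$ algorithm give
\[
\expec[X_{a, t} \mid (T_v)_v] = \sum_{i, j} p_{ij} \frac{I(t, i) J(t, j) N_t}{N_{I, i} N_{J, j}},
\]
where $N_{I, i} := |\{v : I(T_v, i) = 1\}|$ and $N_{J, j} := |\{v : J(T_v, j) = 1\}|$. Let $\mathcal{W}_n$ be the event that $N_{I, i} \geq \tfrac12 \lambda_i n$ and $N_{J, j} \geq \tfrac12 \varrho_j n$ for every colour coordinate $i, j$ actually realized by one of the $\lfloor \mu n \rfloor$ arcs. Assumption~\ref{ass:CCI}.II ensures $\lambda_i, \varrho_j$ are uniformly bounded below (when non-zero), so Lemma~\ref{lem:well-concentrated types} applied to the binomial variables $N_{I, i}$ and $N_{J, j}$, together with a union bound over the at most $2 \mu n$ relevant coordinates, gives $\prob(\neg \mathcal{W}_n) = o(n^{-r})$ for every $r > 0$. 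On $\mathcal{W}_n$, the crude bound $N_t \leq n$ combined with Assumption~\ref{ass:CCI}.II and the formula \eqref{eq:asymp connection number} for $\kappa$ yields $\expec[S_t \mid (T_v)_v] \leq 4 \kappa(t, t) \leq C_1$, where $C_1$ is a universal constant by Lemma~\ref{lem:kappa bounded}.

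Given this mean bound, the Chernoff inequality for sums of independent Bernoullis gives
\[
\prob\bigl(S_t > n^{1 - \nu} \bigm| (T_v)_v, \mathcal{W}_n\bigr) \leq \left( \frac{e C_1}{n^{1 - \nu}} \right)^{n^{1 - \nu}},
\]
which decays faster than any polynomial. Union-bounding over $t \leq u_n^\uparrow(\tau) \leq n^{(1 - \tau)/(1 + \delta)}$ via Lemma~\ref{lem:no stable} and adding $\prob(\neg \mathcal{W}_n)$ completes the proof. The main technical obstacle is controlling the ratio $1/(\lambda_i \varrho_j)$ in the mean calculation: colour pairs with $\lambda_i \varrho_j = 0$ must have $p_{ij} = 0$ (otherwise the $\CCI$ algorithm cannot place an arc of colour $(i, j)$), and Assumption~\ref{ass:CCI}.II makes $1/(\lambda_i \varrho_j)$ uniformly bounded on the support of $p_{ij}$, which lets one recognize the sum as $\kappa(t, t)/\mu$ and invoke Lemma~\ref{lem:kappa bounded}. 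The constraint $\nu < \tau$ is not tight here --- the Chernoff argument works for any $\nu < 1$ --- but is presumably retained to match the scaling in companion lemmas.
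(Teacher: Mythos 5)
Your proposal is correct in substance, but it takes a genuinely different route from the paper's proof. The paper bounds the per-arc probability of a type-$t$ self-loop crudely by $1/N_t$ (both the source and the target set contain all $N_t$ vertices of type $t$), so that on the well-concentration event $\mathcal{V}_{tt}$ it obtains $S_t \preceq \texttt{Bin}\left(\lfloor \mu n \rfloor, 1/(q_t n - \log(n)\sqrt{q_t n})\right)$, whose mean can be as large as order $n^{1-\tau}$; the stability of $t$ and the hypothesis $\nu < \tau$ are then exactly what make the threshold $n^{1-\nu}$ exceed this mean so that the Chernoff bound applies. You instead keep the exact per-arc probability $\sum_{i,j} p_{ij} I(t,i)J(t,j) N_t/(N_{I,i}N_{J,j})$, concentrate the colour-acceptance counts $N_{I,i}, N_{J,j}$, and use Assumption~\ref{ass:CCI}.II together with Lemma~\ref{lem:kappa bounded} to bound the conditional mean of $S_t$ by a constant (at most $4\kappa(t,t)$), after which a Chernoff bound far above the mean yields super-polynomial decay for any $\nu < 1$; the union bound over the polynomially many types $t \leq u_n^\uparrow(\tau)$ via Lemma~\ref{lem:no stable} is common to both arguments. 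The trade-off: your bound is sharper in $\nu$ and explains why the constraint $\nu < \tau$ is an artefact of the cruder estimate, but it relies on Assumption~\ref{ass:CCI}.II (and Lemma~\ref{lem:kappa bounded}), which the lemma as stated does not assume and which the paper's proof never uses; the paper's argument is weaker but self-contained, needing only $\mathcal{V}_{tt}$ and stability of $t$.

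Two points to tighten if you write this up. First, your event $\mathcal{W}_n$ depends on the realized arc colours, so the claim that $\expec[S_t \mid (T_v)_v] \leq 4\kappa(t,t)$ ``on $\mathcal{W}_n$'' is not quite well posed: condition on the colours $(C_a)_a$ as well, and phrase the conclusion as a stochastic domination $S_t \preceq \texttt{Bin}\left(\lfloor \mu n \rfloor, 4/(\lambda^\downarrow \varrho^\downarrow n)\right)$ on $\mathcal{W}_n$, where $\lambda^\downarrow, \varrho^\downarrow$ are the infima from Assumption~\ref{ass:CCI}.II; restricting $\mathcal{W}_n$ to realized colours is indeed the right move, since a union bound over the full (possibly infinite) colour support would not close. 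Second, Lemma~\ref{lem:well-concentrated types} is stated for the type counts $N_t$; for $N_{I,i} \sim \texttt{Bin}(n, \lambda_i)$ you should invoke the underlying binomial Chernoff bound directly, noting that the uniform lower bound $\lambda_i \geq \lambda^\downarrow > 0$ makes the failure probability uniformly super-polynomially small over the at most $2\lfloor \mu n \rfloor$ realized colour coordinates. With these adjustments your argument is complete.
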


\begin{remark}
    Note that for Lemma~\ref{lem:self-loops} super stability is not required.
\end{remark}

\begin{lemma}[Number of multi-arcs]\label{lem:multi-arcs}
    Fix a $\tau \in (1/2, 1)$. Denote by $M_{ts}$ the number of multi-arcs from vertices of type $t \in \mathcal{S}$ towards vertices of type $s \in \mathcal{S}$. We have for all $r > 0$ and $\nu < 2\tau - 1$ that
    \[
    \prob\left( \bigcup_{t = 1}^{u_n^\uparrow} \bigcup_{s = 1}^{u_n^\uparrow} \{M_{ts} > n^{1 - \nu}\} \right) \leq 1/n^r.
    \]
\end{lemma}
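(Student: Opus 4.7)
The plan is to condition on the vertex-types and exploit that, once $(T_v)_v$ is revealed, the $\lfloor\mu n\rfloor$ arc placements in $\CCI$ are i.i.d.\ (after integrating out the colour distribution): a single arc lands on a specific ordered pair $(v,w)$ with probability
\[
p_{vw}\;=\;\sum_{i,j}p_{ij}\,\frac{I(T_v,i)\,J(T_w,j)}{|V_i^I|\,|V_j^J|},
\]
which is comparable to $\kappa(T_v,T_w)/(\mu n^2)$ and so at most $K/(\mu n^2)$ for a constant $K$, by Lemma~\ref{lem:kappa bounded}. First I would pass to the high-probability event $\mathcal{W}_n := \bigcap_{t\le u_n^\uparrow(\tau)}\mathcal{V}_{tt}$, whose complement has super-polynomially small probability by Lemma~\ref{lem:well-concentrated types} combined with the polynomial bound on $u_n^\uparrow(\tau)$ from Lemma~\ref{lem:no stable}. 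On $\mathcal{W}_n$, super-stability ($\tau>1/2$) forces $N_t\le 2q_t n$ for every $t\le u_n^\uparrow(\tau)$, so for a fixed super-stable pair $(t,s)$ each vertex pair $(v,w)\in T^{-1}(t)\times T^{-1}(s)$ receives at least two arcs with probability at most $\binom{\lfloor\mu n\rfloor}{2}p_{vw}^2\le K^2/(2n^2)$, giving the conditional bound $\expec[M_{ts}\mid(T_v)_v]\le N_tN_s\cdot K^2/(2n^2)\le 2K^2 q_tq_s=O(1)$.

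Because this expected count is only $O(1)$ but the target threshold is $n^{1-\nu}$, the key step is super-polynomial concentration for each fixed super-stable pair. I would obtain it by Poissonising the arc count: replacing $\lfloor\mu n\rfloor$ by $\texttt{Poi}(\mu n)$ makes the per-pair counts $\tilde X_{vw}$ independent Poissons with mean $\kappa(T_v,T_w)/n\le K/n$, and $\tilde M_{ts}=\sum_{(v,w)}\1(\tilde X_{vw}\ge 2)$ a sum of $N_tN_s$ independent Bernoullis each of mean at most $K^2/(2n^2)$. A Chernoff bound of the form $\prob(S>k)\le(e\,\expec[S]/k)^k$ then yields
\[
\prob(\tilde M_{ts}>n^{1-\nu})\;\le\;\exp\bigl(-(1-\nu)\,n^{1-\nu}\log n\cdot(1+o(1))\bigr),
\]
and de-Poissonising costs only a $\Theta(\sqrt n)$ factor coming from $\prob(\texttt{Poi}(\mu n)=\lfloor\mu n\rfloor)^{-1}$. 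A union bound over the at most $u_n^\uparrow(\tau)^2\le n^{2(1-\tau)/(1+\delta)}$ super-stable pairs (polynomial by Lemma~\ref{lem:no stable}) preserves super-polynomial decay and hence gives $1/n^r$ for every $r>0$ once $n$ is large.

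The hard part will be precisely this concentration step: a direct Markov bound on $\expec[M_{ts}]=O(1)$ only yields polynomial decay $O(n^{\nu-1})$, which fails the ``for all $r>0$'' requirement, so some argument beyond first moments---Poissonisation as above, or equivalently a higher-moment estimate on the dominating $U$-statistic $\sum_{(v,w)}\binom{X_{vw}}{2}$---is essential. The restriction $\nu<2\tau-1$ itself is comfortable for the Chernoff exponent since $2\tau-1<1$; its real purpose is to match the scale $n^{2(1-\tau)}$ alluded to before the lemma, so that the bound on multi-arcs can be used cleanly in the proof of Theorem~\ref{thm:CCI to IRD}.
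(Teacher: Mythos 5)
Your route is genuinely different from the paper's, and in its core it is sound. The paper never looks at individual vertex pairs: after intersecting with $\mathcal{V}_{ts}$ and conditioning on $N_t,N_s$, it dominates the duplicate count by a single binomial, $M_{ts}\preceq\texttt{Bin}\bigl(\lfloor\mu n\rfloor,\,\mu n/(N_tN_s)\bigr)$, via the worst-case observation that each newly placed arc duplicates an existing one with probability at most $\mu n/(N_tN_s)$; one Chernoff bound then gives $\exp(-\widehat{C}n^{1-\nu})$, and here the mean is of order $n^{2-2\tau}$, which is exactly why the hypothesis $\nu<2\tau-1$ is needed. Your per-pair computation with Poissonisation is sharper (conditional mean $O(1)$, so the restriction on $\nu$ becomes superfluous) at the price of extra machinery (Poissonisation, de-Poissonisation, a union over pairs), while the paper's argument is more elementary and needs no independence across pairs.

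Two points in your sketch need repair, though. First, a counting mismatch: your Poissonised statistic $\tilde M_{ts}=\sum_{(v,w)}\1(\tilde X_{vw}\ge 2)$ counts \emph{pairs} hit at least twice, whereas the $M_{ts}$ of the lemma must dominate the number of \emph{excess} arcs removed when passing from $\bbar{A}_n$ to $\bar{A}_n$ (this is how it is used in Step I of the proof of Theorem~\ref{thm:CCI to IRD}); a single pair receiving $n^{1-\nu}+1$ arcs makes the two quantities differ by $n^{1-\nu}$. You should therefore run the tail bound on $\sum_{(v,w)}(\tilde X_{vw}-1)^+$, e.g.\ via the dominating statistic $\sum_{(v,w)}\binom{\tilde X_{vw}}{2}$ that you yourself name, which in the Poissonised model is still a sum of independent, very light-tailed variables with total mean $O(1)$, so the same super-polynomial decay goes through; as written, your Chernoff bound proves a statement about a smaller quantity. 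Second, the bound $p_{vw}\le K/(\mu n^2)$ does not follow from Lemma~\ref{lem:kappa bounded} alone: that lemma bounds the deterministic kernel, while $p_{vw}$ has the random denominators $|V_i^I|,|V_j^J|$, and you need these to be of order $n$ uniformly in the colour. This requires Assumption~\ref{ass:CCI}~II together with the concentration of the $N_k$ for stable types \emph{and} control of the mass of unstable types (Lemma~\ref{prob:unstable}) — the computation carried out in Lemma~\ref{lem:arc vertex prob stable} — so your event $\mathcal{W}_n$ must be used for this lower bound as well, not only for $N_t\le 2q_tn$. Both fixes are routine, but they are needed before the argument closes.
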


With the quantification of self-loops and multi-arcs we can start the computation of $\bar{A}_n$. This will happen in two steps. First, we will count the number of arcs between two fixed vertex-types where we do not distinguish between ``normal'' arcs and ``bad'' multi-arc/self-loops. We will do this by first estimating the probability that one arc gets assigned to two given vertex types (for super stable and other vertex-types separately). Then, we will use these probabilities to estimate the total number of arcs placed between two given vertex-types. Hereto, we will start by defining random variables that count the number of arcs that are placed between two vertex-types. Compare this with Definition~\ref{def:vertex type arc count}, and note below that we do not remove self-loops and multi-edges.
\begin{definition}[Non-unique vertex-type arc count function]
    In $\CCI_{n, \mu}(T, C, I, J) = ([n], E)$, before removing self-loops and multi-arcs, we define the function $\bbar{A}_n : \mathcal{S} \times \mathcal{S} \to \mathbb{N}$ for two fixed vertex-types $t, s \in \mathcal{S}$ as
    \[
    \bbar{A}_n(t, s) := \left| (v, w) \in E : T_v = t \text{ and } T_w = s   \right|.
    \]
\end{definition}

\begin{lemma}[Arc to vertex-type probability]\label{lem:arc vertex prob stable}
    Suppose $t$ and $s$ are two stable vertex-types at tolerance $\tau \in (0, 1)$, and assume that Assumption~\ref{ass:CCI} is satisfied. Define $\mathcal{A}_{ts}^{(a)}$ as the event that arc $a \in [\mu n]$ is placed from a vertex with type $t$ to a vertex with type $s$. Then, we have that there exists a constant $\widehat{C} > 0$ such that
    \[
   \left| \prob(\mathcal{A}^{(a)}_{ts}) - \frac{q_t q_s \kappa(t, s)}{\mu}  \right| \leq \widehat{C}  \log(n) n^{- \frac{\tau}2} .
    \]
\end{lemma}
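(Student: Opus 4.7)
The plan is to condition on the vertex-type assignment and invoke concentration of the relevant counts. Writing $L_i := \sum_k N_k I(k, i)$ and $R_j := \sum_k N_k J(k, j)$ for the number of potential sources of out-colour $i$ and sinks of in-colour $j$, Steps 3--4 of the $\CCI$-generation algorithm give
\[
\prob(\mathcal{A}_{ts}^{(a)}) = \sum_{i, j} p_{ij}\, I(t, i) J(s, j) \, \expec\!\left[ \frac{N_t}{L_i} \cdot \frac{N_s}{R_j} \right],
\]
with the convention that a summand vanishes when the corresponding $L_i$ or $R_j$ is zero. The target value equals $\sum_{i, j} p_{ij} I(t, i) J(s, j) (q_t/\lambda_i)(q_s/\varrho_j)$ by definition of $\kappa$, so it suffices to show each ratio $N_t/L_i$ concentrates around $q_t/\lambda_i$, and $N_s/R_j$ around $q_s/\varrho_j$, with sufficient speed.

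Formally, I would fix a polynomial cut-off $M_n$ and define the good event $\mathcal{G}$ as the intersection of $\mathcal{V}_{ts}$ (cf. Definition~\ref{def:well-concentrated types}) with the events $\{|L_i - n\lambda_i| \leq \log(n)\sqrt{n\lambda_i}\}$ for every $i \leq M_n$ with $\lambda_i > 0$, together with the analogous events for the $R_j$. Since $L_i \sim \mathrm{Bin}(n, \lambda_i)$ with $\lambda_i \geq \lambda_0 > 0$ by Assumption~\ref{ass:CCI}(II), the Chernoff-type argument underlying Lemma~\ref{lem:well-concentrated types} bounds each concentration failure by $2\exp(-\log^2(n)/2)$; a union bound over $O(M_n)$ events then makes $\prob(\mathcal{G}^c)$ decay faster than any polynomial.

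On $\mathcal{G}$, writing $N_t = n q_t + E_t$ with $|E_t| \leq \log(n)\sqrt{n q_t}$ and $L_i = n\lambda_i + E_i$ with $|E_i| \leq \log(n)\sqrt{n\lambda_i}$, an elementary computation yields
\[
\left| \frac{N_t}{L_i} - \frac{q_t}{\lambda_i} \right| \leq \frac{q_t}{\lambda_i}\left( \frac{2\log(n)}{\sqrt{n q_t}} + \frac{2\log(n)}{\sqrt{n\lambda_i}} \right),
\]
so by stability $q_t \geq n^{-1+\tau}$ and $\lambda_i \geq \lambda_0$ the relative error is $O(\log(n) n^{-\tau/2})$ uniformly in $i$. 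The analogous bound holds for $N_s/R_j$; summing against $p_{ij} I(t, i) J(s, j)$ over $i, j \leq M_n$ and invoking the boundedness of $\kappa$ from Lemma~\ref{lem:kappa bounded}, the truncated sum differs from the truncated target by at most $\widehat{C}\log(n) n^{-\tau/2}$. The contribution from $\mathcal{G}^c$ is absorbed by its super-polynomially small probability, and the tail beyond $M_n$ is controlled using $\lambda_i, \varrho_j \geq \lambda_0, \varrho_0$ together with convergence of the series $\sum_{i, j} p_{ij} I(t, i) J(s, j)/(\lambda_i \varrho_j) = \kappa(t, s)/\mu < \infty$: for $M_n$ polynomial and sufficiently large, both truncation tails become $o(\log(n) n^{-\tau/2})$.

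The main obstacle is to balance the union bound against the colour-distribution tail: $M_n$ must be polynomial so the super-polynomial per-colour concentration estimate survives the union bound, yet large enough to make the truncation error negligible. The finiteness of $\kappa(t, s)$ for each fixed $(t, s)$ guarantees such an $M_n$ exists, yielding the constant $\widehat{C}$ in the desired bound.
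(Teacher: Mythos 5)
Your overall skeleton (condition on the colour and the type counts, use concentration, finish with boundedness of $\kappa$) is sound, and your observation that $L_i = \sum_k N_k I(k,i) \sim \texttt{Bin}(n,\lambda_i)$ is a genuinely nicer way to control the denominators than the paper's route (the paper instead truncates the denominator sums over vertex-types $k \leq M_n$, concentrates each $N_k$ separately, and controls the type-tail via $\prob(T>M_n)$ and Lemma~\ref{prob:unstable}, which is where the moment condition $\expec[T^{1+\varepsilon}]<\infty$ enters). However, there is a genuine gap in your handling of the colour truncation. You build a single good event $\mathcal{G}$ requiring concentration of $L_i, R_j$ for all colours $i,j \leq M_n$, and then claim that for $M_n$ polynomial the truncation tails are $o(\log(n)n^{-\tau/2})$ because the series $\sum_{i,j} p_{ij}I(t,i)J(s,j)/(\lambda_i\varrho_j) = \kappa(t,s)/\mu$ converges. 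Convergence of that series gives you no rate: Assumption~\ref{ass:CCI} imposes no moment or tail condition on the colour distribution $C$ (condition II only bounds $\lambda_i,\varrho_j$ away from zero; e.g.\ one can have $I \equiv 1$, so $\lambda_i \equiv 1$, with $p_{i\cdot}$ decaying as slowly as one likes). For such a heavy colour tail, $\prob(C^{\text{out}} > M_n \text{ or } C^{\text{in}} > M_n)$ need not be $o(\log(n)n^{-\tau/2})$ for any $M_n$ compatible with your union bound, so the step ``choose $M_n$ polynomial and large enough'' fails under the stated assumptions.

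The fix is to avoid any global union over colours: first apply the law of total probability over $C_a=(i,j)$ (as the paper does), and for each \emph{fixed} colour work on the good event consisting only of $\mathcal{V}_{ts}$ and the two concentration events for $L_i$ and $R_j$; its complement has probability at most $6\exp(-\log(n)^2/2)$ uniformly in $(i,j)$, and your relative-error bound for $N_tN_s/(L_iR_j)$ versus $q_tq_s/(\lambda_i\varrho_j)$ is already uniform in $(i,j)$ thanks to $\lambda_i \geq \lambda^\downarrow$, $\varrho_j \geq \varrho^\downarrow$. Summing these per-colour bounds against $p_{ij}$ then needs no truncation at all and yields the claimed estimate, with the additive $\exp(-\log(n)^2/2)$ terms absorbed into $\widehat{C}\log(n)n^{-\tau/2}$. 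With that restructuring your argument goes through and is, if anything, leaner than the paper's, since it dispenses with the vertex-type truncation $M_n$ and Lemma~\ref{prob:unstable} entirely.
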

\begin{lemma}[Arc to vertex-type probability -- no stability]\label{lem:arc vertex prob no stable}
    Suppose $t,s \in \mathcal{S}$ are two vertex-types of which at least one is not stable at tolerance $\tau \in (0, 1)$. Assume that Assumption~\ref{ass:CCI} is satisfied. Define $\mathcal{A}_{ts}^{(a)}$ as the event that arc $a \in [\mu n]$ is placed from a vertex with type $t$ to a vertex with type $s$. Then, we have that there exists a constant $\widehat{C} > 0$ such that for any fixed $r > 0$ we have that
    \[
    \prob(\mathcal{A}_{ts}^{(a)}) \leq \widehat{C} \sqrt{q_t q_s} \left(\sqrt{q_t q_s}+ \frac{\log(n)}{\sqrt{n}}  + \frac1{n^r} \right) .
    \]
\end{lemma}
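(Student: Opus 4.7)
The plan is to compute $\prob(\mathcal{A}_{ts}^{(a)})$ directly from the $\CCI$-generation procedure and then dominate the resulting expectation by splitting on a high-probability concentration event for the colour-compatible vertex counts. No new heavy tool is needed; Lemma~\ref{lem:kappa bounded}, a Chernoff bound for binomial variables, and the elementary multinomial moment formula for $(N_t,N_s)$ do the work.

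First I would write the probability explicitly. Given that arc $a$ has colour $(i,j)$ (sampled with probability $p_{ij}$), Steps 3 and 4 of the $\CCI$-generation algorithm choose the start uniformly among vertices with $I(T_v,i)=1$ and the end independently and uniformly among vertices with $J(T_w,j)=1$. Writing $L_i:=\sum_{v\in[n]}\1\{I(T_v,i)=1\}$ and $R_j:=\sum_{v\in[n]}\1\{J(T_v,j)=1\}$, this yields
\[
\prob(\mathcal{A}_{ts}^{(a)})=\sum_{i,j}p_{ij}I(t,i)J(s,j)\,\expec\!\left[\frac{N_t N_s}{L_i R_j}\right].
\]
Only tuples with $I(t,i)=J(s,j)=1$ contribute, and for any such tuple Assumption~\ref{ass:CCI}.II yields $\lambda_i\ge\underline{\lambda}>0$ and $\varrho_j\ge\underline{\varrho}>0$.

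Second, I would set $\mathcal{E}_{ij}:=\{L_i\ge n\lambda_i/2\}\cap\{R_j\ge n\varrho_j/2\}$. Since $L_i\sim\textup{Bin}(n,\lambda_i)$ with $\lambda_i\ge\underline{\lambda}$, a standard multiplicative Chernoff bound gives $\prob(\mathcal{E}_{ij}^c)\le 2\exp(-c_0 n)$ with $c_0=\underline{\lambda}\wedge\underline{\varrho}/8$, uniformly in $i,j$. On $\mathcal{E}_{ij}$ I bound $N_tN_s/(L_iR_j)\le 4N_tN_s/(n^2\lambda_i\varrho_j)$, and off $\mathcal{E}_{ij}$ I use the trivial $N_tN_s/(L_iR_j)\le 1$. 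Using the multinomial identities $\expec[N_tN_s]=n(n-1)q_tq_s$ for $t\neq s$ and $\expec[N_t^2]\le n^2q_t^2+nq_t$ for $t=s$, both covered by $\expec[N_tN_s]\le n^2q_tq_s+n\sqrt{q_tq_s}$, and the key identity $\sum_{i,j}p_{ij}I(t,i)J(s,j)/(\lambda_i\varrho_j)=\kappa(t,s)/\mu$ from~\eqref{eq:asymp connection number} together with Lemma~\ref{lem:kappa bounded} giving $\kappa(t,s)\le\|\kappa\|_\infty$, I obtain
\[
\prob(\mathcal{A}_{ts}^{(a)})\le \frac{4\|\kappa\|_\infty}{\mu}\!\left(q_tq_s+\frac{\sqrt{q_tq_s}}{n}\right)+2e^{-c_0 n}.
\]

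Finally, I would factor out $\sqrt{q_tq_s}$, use $1/n\le\log(n)/\sqrt{n}$ for $n$ large, and absorb the super-polynomially small residual $2e^{-c_0n}$ into a term of the form $\widehat{C}\sqrt{q_tq_s}/n^r$, giving the stated bound. The main (and really only) obstacle is this absorption step: for fixed $t,s\in\mathcal{S}$ the quantity $\sqrt{q_tq_s}$ is a fixed positive constant while $e^{-c_0n}$ decays faster than any polynomial, so $2e^{-c_0 n}\le\widehat{C}\sqrt{q_tq_s}/n^r$ holds for $n$ beyond some threshold depending on $r$ and $q_tq_s$; enlarging $\widehat{C}$ then covers all smaller $n$. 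This is the same mechanism by which $n^{-r}$ error terms are routinely traded against super-polynomial tails, and it explains why the lemma's constant $\widehat{C}$ depends on $r$.
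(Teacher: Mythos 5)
Your decomposition is a genuinely different (and for the main term, leaner) route than the paper's. The paper re-runs the machinery of Lemma~\ref{lem:arc vertex prob stable}: it conditions on simultaneous well-concentration of all type counts $N_k$ up to a cutoff $M_n$, needs Lemma~\ref{prob:unstable} to control the tail mass of types beyond $M_n$, and keeps the error multiplicative. You instead take the expectation of the exact ratio $N_tN_s/(L_iR_j)$ and only need concentration of the two aggregate counts $L_i\sim\texttt{Bin}(n,\lambda_i)$, $R_j\sim\texttt{Bin}(n,\varrho_j)$, bounded below through Assumption~\ref{ass:CCI}.II by a single Chernoff bound; the identity $\sum_{i,j}p_{ij}I(t,i)J(s,j)/(\lambda_i\varrho_j)=\kappa(t,s)/\mu$ together with Lemma~\ref{lem:kappa bounded} and $\expec[N_tN_s]\leq n^2q_tq_s+n\sqrt{q_tq_s}$ then correctly yields the leading contribution $\widehat{C}\sqrt{q_tq_s}\left(\sqrt{q_tq_s}+1/n\right)$. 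Up to that point the argument is sound and avoids both the $M_n$ cutoff and Lemma~\ref{prob:unstable}.

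The gap is the final absorption step. You bound the contribution of $\mathcal{E}_{ij}^c$ by $2e^{-c_0n}$ and then argue $2e^{-c_0n}\leq\widehat{C}\sqrt{q_tq_s}\,n^{-r}$ for $n$ beyond a threshold \emph{depending on $q_tq_s$}, enlarging $\widehat{C}$ to cover smaller $n$. This makes $\widehat{C}$ depend on the pair $(t,s)$, which the lemma cannot afford: instability is an $n$-dependent property, and any fixed type with $q_t>0$ is stable for all large $n$ (once $q_t\geq n^{-1+\tau}$ one has $t<u_n^\uparrow(\tau)$, cf.\ Definition~\ref{def:stable vertices}). So for a fixed pair the hypothesis holds only for finitely many $n$, and the statement is only meaningful --- and is only used, in the proof of Lemma~\ref{lem:deviation A double hat} --- uniformly over pairs that vary with $n$, for which $q_tq_s$ can be arbitrarily small (infinite type support). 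For such pairs the inequality $e^{-c_0n}\lesssim\sqrt{q_tq_s}\,n^{-r}$ simply fails whenever $q_tq_s$ is smaller than, say, $e^{-2c_0n}$ at that $n$, and no single constant $\widehat{C}$ works.

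The repair is to keep the numerator on the bad event instead of bounding the ratio by $1$: since the ratio vanishes unless $N_t\geq1$ and $N_s\geq1$, one has $\expec\bigl[\1_{\mathcal{E}_{ij}^c}N_tN_s/(L_iR_j)\bigr]\leq\prob\bigl(\mathcal{E}_{ij}^c\cap\{N_t\geq1,N_s\geq1\}\bigr)\leq\sum_{v\neq w}\prob(T_v=t,T_w=s)\,\prob(\mathcal{E}_{ij}^c\mid T_v=t,T_w=s)\leq 2n^2q_tq_s\,e^{-c_1n}$ (and $\leq 2nq_t e^{-c_1n}$ when $t=s$), where the conditional Chernoff bound still holds because fixing two vertex types leaves a $\texttt{Bin}(n-2,\lambda_i)$ inside $L_i$. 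This error is at most $\widehat{C}\sqrt{q_tq_s}\,n^{-r}$ uniformly in $(t,s)$ and $n$, giving the lemma with a single constant. (Alternatively you could settle for the weaker additive form $\widehat{C}\sqrt{q_tq_s}(\sqrt{q_tq_s}+\log(n)/\sqrt{n})+\widehat{C}n^{-r}$, which is the form actually invoked in the proof of Lemma~\ref{lem:deviation A double hat}, but that is not the statement as written.)
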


Lemma~\ref{lem:arc vertex prob stable} shows that the probability of arc placement between two vertex-types $t, s \in \mathcal{S}$ is roughly $q_t q_s \kappa(t, s) / \mu$. Since we seek to place $\mu n$ arcs, we can derive from this that there will be roughly $q_t q_s \kappa(t, s) n$ arcs from vertices of type $t$ to vertices of type $s$. This is the expected number we require in \eqref{eq:A bar expec}. Furthermore, Lemma~\ref{lem:arc vertex prob no stable} shows that the probability of arcs being placed in between some unstable vertices is small. Together with Lemma~\ref{lem:kappa bounded} this will show that these types of arcs do not contribute too much to the overall count. Together, we can use these considerations to show that $\bbar{A}_n$ concentrates for all stable vertex-types.

\begin{lemma}[Non-unique arc to vertex-type count]\label{lem:deviation A double hat}
    Suppose that Assumption~\ref{ass:CCI} is satisfied. Then, for all $\alpha > 3/8$ and $C > 0$ we have that
    \[
     \prob\left(\bigcup_{t = 1}^{u^\uparrow_n(\tau)} \bigcup_{s = 1}^{u^\uparrow_n(\tau)} \left\{ \left|\bbar{A}_n(t, s) - \lfloor \kappa(t, s) q_t q_s n \rfloor \right| > C n^{1/2 + \alpha} \sqrt{q_t q_s} \right\} \right) = o(1),
    \]
    for all $\tau > 1/2 - \alpha$.
\end{lemma}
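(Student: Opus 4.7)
The plan is to exploit the fact that, conditional on the vertex-type assignment $(T_v)_{v\in[n]}$, the arcs of $\CCI$ are placed independently, so that $\bbar{A}_n(t,s)$ is conditionally binomial. Writing $p^{ts} := \prob(\mathcal{A}^{(a)}_{ts}\mid (T_v)_{v\in[n]})$, the conditional law of $\bbar{A}_n(t,s)$ is $\texttt{Bin}(\mu n, p^{ts})$. I would decompose via the triangle inequality,
\[
|\bbar{A}_n(t,s) - \lfloor \kappa(t,s) q_t q_s n\rfloor|
\le |\bbar{A}_n(t,s) - \mu n p^{ts}|
+ |\mu n p^{ts} - q_t q_s \kappa(t,s) n| + 1,
\]
and show each of the two main terms is at most $(C/3)\,n^{1/2+\alpha}\sqrt{q_t q_s}$, uniformly over stable pairs $t,s\le u_n^\uparrow(\tau)$. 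The leftover $+1$ is absorbed because, for stable pairs, the tolerance satisfies $n^{1/2+\alpha}\sqrt{q_t q_s}\ge n^{\alpha+\tau-1/2}\to\infty$ under the hypothesis $\tau>1/2-\alpha$.

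For the deterministic piece, I will work on the event
\[
\mathcal{V}_n := \bigcap_{k\le u_n^\uparrow(\tau)} \bigl\{|N_k - n q_k|\le \log(n)\sqrt{n q_k}\bigr\},
\]
which holds with probability $1-o(1)$ by combining Lemma~\ref{lem:well-concentrated types} with the polynomial size of $u_n^\uparrow(\tau)$ from Lemma~\ref{lem:no stable}. Unpacking the $\CCI$ algorithm gives
\[
p^{ts} = \frac{N_t N_s}{n^2}\sum_{i,j}\frac{p_{ij}\, I(t,i)\, J(s,j)}{(L_i/n)(R_j/n)}, \qquad L_i := \sum_k N_k I(k,i),\ R_j := \sum_k N_k J(k,j).
\]
On $\mathcal{V}_n$, Assumption~\ref{ass:CCI}(II) together with the polynomial decay of $\prob(T>u_n^\uparrow(\tau))$ from Lemma~\ref{prob:unstable} (to absorb unstable contributions to $L_i,R_j$) will yield $L_i/n = \lambda_i(1+O(\log n/\sqrt n))$ and $R_j/n = \varrho_j(1+O(\log n/\sqrt n))$ uniformly in the relevant $i,j$, while well-concentration also gives $N_t N_s/n^2 = q_t q_s + O(\log(n)\sqrt{q_t q_s/n})$. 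Combined with the boundedness of $\kappa$ (Lemma~\ref{lem:kappa bounded}) and the identity $\kappa(t,s)/\mu = \sum_{i,j}p_{ij}I(t,i)J(s,j)/(\lambda_i\varrho_j)$, these imply $|p^{ts} - q_t q_s \kappa(t,s)/\mu| = O(\log(n)\sqrt{q_t q_s/n})$, so the second term is $O(\log(n)\sqrt{n q_t q_s}) = o(n^{1/2+\alpha}\sqrt{q_t q_s})$ for any $\alpha>0$.

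For the stochastic piece I will apply Bernstein's inequality conditional on the types: with $x=(C/3)\,n^{1/2+\alpha}\sqrt{q_t q_s}$,
\[
\prob\!\bigl(|\bbar{A}_n(t,s) - \mu n p^{ts}| > x \,\big|\, (T_v)\bigr) \le 2\exp\!\left(-\frac{x^2}{2\mu n p^{ts} + 2x/3}\right).
\]
On $\mathcal{V}_n$ one has $\mu n p^{ts} = \Theta(n q_t q_s)$, so in the sub-Gaussian regime $x\le\mu n p^{ts}$ the exponent is $-\Omega(n^{2\alpha})$, and in the sub-exponential regime $x>\mu n p^{ts}$ it is $-\Omega(n^{1/2+\alpha}\sqrt{q_t q_s})\ge -\Omega(n^{\alpha+\tau-1/2})$ for stable pairs. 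Both rates are positive powers of $n$ under $\alpha>0$ and $\tau>1/2-\alpha$, so a union bound over the $O(n^{2(1-\tau)/(1+\delta)})$ stable pairs (Lemma~\ref{lem:no stable}) still yields $o(1)$; combining with $\prob(\neg\mathcal{V}_n)=o(1)$ will conclude the proof.

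The hard part will be the uniform control of the denominators $L_i/n$ and $R_j/n$ in the second step, since these sums aggregate \emph{both} stable and unstable vertex-types while $\mathcal{V}_n$ controls only the former. I plan to handle this by observing that the total mass $\sum_{k\text{ unstable}} N_k/n$ is $o(1)$ via the decay of $\prob(T>u_n^\uparrow(\tau))$ from Lemma~\ref{prob:unstable}; the lower bound $\lambda_i,\varrho_j\ge\zeta>0$ from Assumption~\ref{ass:CCI}(II) then ensures the $O(\log n/\sqrt n)$ relative error in the numerators survives division into the bounded-below denominators. Once these estimates are in place, the conditional Bernstein bound becomes a routine computation.
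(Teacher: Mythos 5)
Your overall strategy is sound and is, in spirit, the same as the paper's: conditionally on the types, $\bbar{A}_n(t,s)$ is binomial with $\lfloor\mu n\rfloor$ trials, one concentrates the per-arc type-assignment probability around $q_tq_s\kappa(t,s)/\mu$ on a well-concentration event, applies an exponential tail bound, and finishes with a union bound over the polynomially many stable pairs (Lemma~\ref{lem:no stable}). Where you differ is in the packaging: you keep a single $\sqrt{q_tq_s}$-weighted error estimate for $p^{ts}$ and one Bernstein bound for the whole range $t,s\le u_n^\uparrow(\tau)$, whereas the paper proves two separate per-arc estimates (Lemmas~\ref{lem:arc vertex prob stable} and \ref{lem:arc vertex prob no stable}, whose error terms are additive rather than proportional to $q_tq_s$) and is therefore forced to split the union into a very-stable regime (auxiliary tolerance $\tau_1\in(2/3,2\alpha)$, which is where the threshold $\alpha>3/8$ enters) and a moderately-stable regime handled by the crude upper bound. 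If your multiplicative estimate is carried out correctly, your route is cleaner and avoids that case split.

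However, there is a genuine gap in the step where you claim $L_i/n=\lambda_i(1+O(\log n/\sqrt n))$ and $R_j/n=\varrho_j(1+O(\log n/\sqrt n))$. Your event $\mathcal{V}_n$ only controls types up to $u_n^\uparrow(\tau)$, and the uncontrolled mass $\sum_{k>u_n^\uparrow(\tau)}N_k/n$ is of order $\prob(T>u_n^\uparrow(\tau))$, which by Lemma~\ref{prob:unstable} decays only like $n^{(\tau-1)(1+\varepsilon-r)/(2+\varepsilon)}$; for $\tau$ close to $1$ (the lemma must hold for \emph{all} $\tau>1/2-\alpha$) or $\varepsilon$ small this is far slower than $n^{-1/2}$, so the $O(\log n/\sqrt n)$ claim does not follow from the tools you cite. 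Nor does your fallback ``the unstable mass is $o(1)$'' suffice: a relative error $\eta_n$ in the denominators produces an error of order $nq_tq_s\,\eta_n$ in $\mu n p^{ts}$, and for $q_t,q_s=\Theta(1)$ this must be beaten by the tolerance $n^{1/2+\alpha}\sqrt{q_tq_s}$, i.e.\ you need the quantitative rate $\eta_n=o(n^{-(1/2-\alpha)})$, not just $\eta_n\to0$. The fix is exactly the device the paper uses inside the proof of Lemma~\ref{lem:arc vertex prob stable}: take the well-concentration event over types up to a dummy cutoff $M_n=u_n^\uparrow\bigl(\varepsilon/(2(1+\varepsilon))\bigr)$, chosen as a function of $\varepsilon$ only (not of $\tau$), so that Lemma~\ref{prob:unstable} gives tail mass $O\bigl(n^{-1/2+r/(2(1+\varepsilon))}\bigr)$ while $M_n$ remains polynomial and the event still holds with probability $1-o(1)$ by Lemmas~\ref{lem:well-concentrated types} and \ref{lem:no stable}. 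With that replacement (and $\lfloor\mu n\rfloor$ in place of $\mu n$), your decomposition and the conditional Bernstein argument go through.
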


Lemma~\ref{lem:deviation A double hat} provides the final piece of the puzzle. Not only does the parameter $\alpha$ that appears in the lemma's statement coincide with the parameter given in Theorem~\ref{thm:CCI to IRD}, but with the lemma we are now also in a position to prove Theorem~\ref{thm:CCI to IRD}. First, with Lemma~\ref{lem:self-loops} and \ref{lem:multi-arcs}, we will show that $\bbar{A}_n$ is close to $\bar{A}_n$ (cf. Definition~\ref{def:vertex type arc count}) for stable vertex types. Thereafter, we apply this ``closeness'' together with Assumption~\ref{ass:CCI}, Assumption~\ref{ass:CCI unstable vertices}, and Theorem~\ref{thm:IRD to ARD} to prove Theorem~\ref{thm:CCI to IRD}. We will use the same steps as outlined in Section~\ref{sec:heuristics II}.

\begin{proof}[\textbf{Proof of Theorem~\ref{thm:CCI to IRD}}]
    Fix a constant $\alpha \in (0, 1/2)$ such that $3/8 < \alpha < 1/2$. Given the value of $\alpha$, choose a super-stable tolerance $\tau^+ > 1/2$ (cf. Definition~\ref{def:super stable vertices}) and a value $\varepsilon$ close enough to zero such that 
    \begin{equation}\label{eq:condition tau}
        \frac34 < 1 - \frac{2 + \varepsilon}{8 + 6 \varepsilon} \leq \tau^+ < 2 \alpha.
    \end{equation}
    This is possible because of two reasons. Firstly, the fraction in \eqref{eq:condition tau} converges to $3/4 = 2 \cdot 3/8 < 2 \alpha$ for $\varepsilon \to 0$. Secondly, we can take $\varepsilon \to 0$, since the requirement $\mathbb{E}[T^{1 + \varepsilon}] < \infty$ for some $\varepsilon > 0$ in Assumption~\ref{ass:CCI} implies that $\mathbb{E}[T^{1 + \varepsilon'}] < \infty$ for all $\varepsilon' < \varepsilon$ as well.
    
    Additionally, choose a stable tolerance $\tau > 1 - 2\alpha$. We will move through the following steps to prove the result:
    \begin{enumerate}[label = \Roman*.]
        \item We will use Lemma~\ref{lem:self-loops}, \ref{lem:multi-arcs} and \ref{lem:deviation A double hat} to show that \begin{equation}\prob\left(\bigcup_{t = 1}^{u^\uparrow_n(\tau^+)} \bigcup_{s = 1}^{u^\uparrow_n(\tau^+)} \left\{ \left|\bar{A}_n(t, s) - \lfloor \kappa(t, s) q_t q_s n \rfloor \right| > n^{1/2 + \alpha} \sqrt{q_t q_s} \right\} \right) = o(1). \label{eq:goal step I proof II}\end{equation}
        \item We will use Lemma~\ref{lem:kappa bounded} and \ref{lem:deviation A double hat} to upgrade the result of Step I to \begin{equation}\label{eq:upgrade step II proof II}\prob\left(\bigcup_{t = 1}^{u^\uparrow_n(\tau)} \bigcup_{s = 1}^{u^\uparrow_n(\tau)} \left\{ \left|\bar{A}_n(t, s) - \lfloor \kappa(t, s) q_t q_s n \rfloor \right| > n^{1/2 + \alpha} \sqrt{q_t q_s} \right\} \right) = o(1).\end{equation}
        \item We will use the result of Step II together with Theorem~\ref{thm:IRD to ARD} to show the desired result.
    \end{enumerate}

    \paragraph{Step I.} Fix two super-stable vertex types $t, s \in \mathcal{S}$. Denote by $S_t$ the number of self-loops between vertices of type $t$, and by $M_{ts}$ the number of multi-arcs from a vertex of type $t$ to a vertex of type $s$. First of all, we note that \[\left(\bbar{A}_n(t, s) - S_t \1_{\{t = s\}} - M_{ts} \right)\vee 0 \leq \bar{A}_n(t, s) \leq \bbar{A}_n(t,s).\] Using this, we bound \eqref{eq:goal step I proof II} from above as
    \begin{subequations}
    \begin{align}
         &\prob\left(\bigcup_{t = 1}^{u^\uparrow_n} \bigcup_{s = 1}^{u^\uparrow_n} \left\{ \left|\bbar{A}_n(t, s) - \lfloor \kappa(t, s) q_t q_s n \rfloor \right| > n^{1/2 + \alpha} \sqrt{q_t q_s} \right\} \right) \\
         &+ \prob\left(\bigcup_{t = 1}^{u^\uparrow_n} \bigcup_{s = 1}^{u^\uparrow_n} \left\{ \left|\bbar{A}_n(t, s) - S_t \1_{\{t = s\}} - M_{ts} - \lfloor \kappa(t, s) q_t q_s n \rfloor \right| > n^{1/2 + \alpha} \sqrt{q_t q_s} \right\} \right).\label{eq:step I proof II bound}
    \end{align}
    \end{subequations}
    Now, we seek to apply Lemma~\ref{lem:deviation A double hat} on both probabilities in the sum. For the first, we can directly apply it. For the second, we will have to deal with the inclusion of the $S_t$ and $M_{ts}$ random variables. To do this, define the events $\mathcal{S}_{ts} := \{S_t \leq n^{1 - \nu_1}\}\cap\{t = s\}$ for some $\nu_1 < \tau^+$ and $\mathcal{M}_{ts} := \{M_{ts} \leq n^{1 - \nu_2}\}$ for some $\nu_2 < 2 \tau^+ - 1$. We can now intersect the second probability with these events and apply the union bound to find an upper-bound for \eqref{eq:step I proof II bound}:
    \begin{subequations}
    \begin{align}
    &\prob\left(\bigcup_{t = 1}^{u^\uparrow_n} \bigcup_{s = 1}^{u^\uparrow_n} \left\{ \left|\bbar{A}_n(t, s) - S_t \1_{\{t = s\}} - M_{ts} - \lfloor \kappa(t, s) q_t q_s n \rfloor \right| > n^{1/2 + \alpha} \sqrt{q_t q_s} \right\} \cap \mathcal{S}_{ts} \cap \mathcal{M}_{ts} \right)\label{eq:intersect step I proof II bound} \\
    &+ \prob\left(\bigcup_{t = 1}^{u_n^\uparrow} \neg \mathcal{S}_{tt} \right) +  \prob\left( \bigcup_{t = 1}^{u^\uparrow_n} \bigcup_{s = 1}^{u^\uparrow_n} \neg \mathcal{M}_{ts} \right) \label{eq:event err step I proof II}
    \end{align}
    \end{subequations}
    We will now show that \eqref{eq:intersect step I proof II bound} converges to zero. To do this, first note that conditioned on the events $\mathcal{S}_{ts}$ and $\mathcal{M}_{ts}$ we may replace $S_t$ by $n^{1 - \nu_1}$ and $M_{ts}$ by $n^{1 - \nu_2}$ in order to create a larger probability. Doing this, applying the triangle inequality, and removing the events $\mathcal{S}_{ts}$ and $\mathcal{M}_{ts}$ yields the upper-bound
    \begin{equation}\label{eq:domination goal proof II}
    \prob\left(\bigcup_{t = 1}^{u^\uparrow_n} \bigcup_{s = 1}^{u^\uparrow_n} \left\{ \left|\bbar{A}_n(t, s) - \lfloor \kappa(t, s) q_t q_s n \rfloor \right| > n^{1/2 + \alpha} \sqrt{q_t q_s} - n^{1 - \nu_1} - n^{1 - \nu_2} \right\}  \right).
    \end{equation}
    Now, we seek to show that $n^{1/2 + \alpha} \sqrt{q_t q_s}$ dominates over $n^{1 - \nu_{i}}$ for $i \in \{1, 2\}$. Recall that $q_{t, s} \leq n^{-1 + \tau^+}$ (due to super-stability) and conclude that \[n^{1/2 + \alpha} \sqrt{q_t q_s} \geq n^{\alpha + \tau^+ - 1/2}.\]
    Thus, for domination we only need to show that $\alpha + \tau^+ - 1/2 > 1 - \nu_{i}$ for $i \in \{1, 2\}$. Moreover, since we can choose $\nu_1$ and $\nu_2$ such that $\nu_1 < \tau^+$ and $\nu_2 < 2 \tau^+ - 1$, we have domination when we can show both that $\alpha + \tau^+ - 1/2 > 1 - \tau^+$ and $\alpha + \tau^+ - 1/2 > 2 - 2\tau^+$. Both of these inequalities are true, since we picked $\alpha > 3/8$ and $\tau^+ > 3/4$ from \eqref{eq:condition tau}.

    Because $n^{1/2 + \alpha} \sqrt{q_t q_s}$ dominates $n^{1 - \nu_{1, 2}}$, we may further upper-bound \eqref{eq:domination goal proof II} as
    \[
    \prob\left(\bigcup_{t = 1}^{u^\uparrow_n} \bigcup_{s = 1}^{u^\uparrow_n} \left\{ \left|\bbar{A}_n(t, s) - \lfloor \kappa(t, s) q_t q_s n \rfloor \right| > \frac12 n^{1/2 + \alpha} \sqrt{q_t q_s} \right\}  \right).
    \]
    Note this upper bound equals $o(1)$ due to Lemma~\ref{lem:deviation A double hat}. Thus, indeed \eqref{eq:intersect step I proof II bound} converges to zero. We also immediately have that \eqref{eq:event err step I proof II} converges to zero from Lemmas~\ref{lem:self-loops} and \ref{lem:multi-arcs}. Hence, we can finally conclude that \eqref{eq:step I proof II bound} converges to zero, implying that \eqref{eq:goal step I proof II} is satisfied.

    \paragraph{Step II.} In this step we will bound the following probability:
    \begin{equation}\label{eq:goal step II proof II}
            \prob\left( \bigcup_{(t \vee s) > u_n^\uparrow(\tau^+)}^{u_n^\uparrow(\tau)}\left\{ \left|\bar{A}_n(t, s) - \lfloor \kappa(t, s) q_t q_s n \rfloor \right| > n^{1/2 + \alpha} \sqrt{q_t q_s} \right\} \right).
    \end{equation}
    Note that the union inside \eqref{eq:goal step II proof II} considers only the pairs vertex-types $t, s \in \mathcal{S}$ for which at least one of the two is not super-stable. This is the difference between the union in \eqref{eq:upgrade step II proof II} when compared to the one in \eqref{eq:goal step I proof II}. Thus, if \eqref{eq:goal step II proof II} converges to zero, then (due to Step I) we also have that \eqref{eq:upgrade step II proof II} converges to zero.

    To show that \eqref{eq:goal step II proof II} converges to zero, we will first show that $n^{1/2 + \alpha} \sqrt{q_t q_s}$ is greater than $\lfloor \kappa(t, s) q_t q_s n \rfloor $. To this end, it follows from Lemma~\ref{lem:kappa bounded} to show there exists a constant $\kappa^\uparrow \in \mathbb{R}$ such that
    \[
    n^{1/2 + \alpha} \sqrt{q_t q_s} - \lfloor \kappa(t, s) q_t q_s n \rfloor \geq n^{1/2 + \alpha} \sqrt{q_t q_s} \left( 1 - \kappa^\uparrow n^{1/2 - \alpha} \sqrt{q_t q_s}  \right).
    \]
    Because either vertex-type $t$ or $s$ is not super-stable we know that
    \[
    \kappa^\uparrow n^{1/2 - \alpha} \sqrt{q_t q_s} \leq \kappa^\uparrow n^{\tau^+ / 2 - \alpha}.
    \]
    From \eqref{eq:condition tau} we find that $\alpha > \tau^+ / 2$, implying for $n$ large that $\kappa^\uparrow n^{1/2 - \alpha} \sqrt{q_t q_s} < 1$. Hence, we may conclude indeed that
    \[
    n^{1/2 + \alpha} \sqrt{q_t q_s} \geq \lfloor \kappa(t, s) q_t q_s n \rfloor .
    \]
    The consequence of this, due to the fact that $\bar{A}_n \geq 0$, is that \eqref{eq:goal step II proof II} equals
    \[
    \prob\left(  \bigcup_{(t \vee s) > u_n^\uparrow(\tau^+)}^{u_n^\uparrow(\tau)}\left\{ \bar{A}_n(t, s) - \lfloor \kappa(t, s) q_t q_s n \rfloor  > n^{1/2 + \alpha} \sqrt{q_t q_s} \right\} \right).
    \]
    The idea is now to upper-bound this probability by recalling that $\bbar{A}_n \geq \bar{A}_n$ and using Lemma~\ref{lem:deviation A double hat} to show it converges to zero. We can apply Lemma~\ref{lem:deviation A double hat}, since  $\tau > 1 - 2\alpha > 1/2 - \alpha$. Putting this plan into action yields
    \begin{align*}
        &\prob\left( \bigcup_{(t \vee s) > u_n^\uparrow(\tau^+)}^{u_n^\uparrow(\tau)}\left\{ \bar{A}_n(t, s) - \lfloor \kappa(t, s) q_t q_s n \rfloor  > n^{1/2 + \alpha} \sqrt{q_t q_s} \right\} \right),\\
        &\leq \prob\left( \bigcup_{(t \vee s) > u_n^\uparrow(\tau^+)}^{u_n^\uparrow(\tau)}\left\{ \bbar{A}_n(t, s) - \lfloor \kappa(t, s) q_t q_s n \rfloor  > n^{1/2 + \alpha} \sqrt{q_t q_s} \right\} \right),\\
        &\leq \prob\left( \bigcup_{(t \vee s) > u_n^\uparrow(\tau^+)}^{u_n^\uparrow(\tau)}\left\{ \left| \bbar{A}_n(t, s) - \lfloor \kappa(t, s) q_t q_s n \rfloor \right| > n^{1/2 + \alpha} \sqrt{q_t q_s} \right\} \right) = o(1).
    \end{align*}

    \paragraph{Step III.} We will now use Assumption~\ref{ass:CCI unstable vertices} to remove the influence of unstable vertices. Then, we will condition on the realisation of $\bar{A}_n$ using the law of total probability to turn $\CCI$ into $\ARD$. This yields
    \begin{align*}
    \prob\left( \CCI_{n, \mu} \in \mathcal{Q}_n \right) &= \prob\left( \CCI_{n, \mu}^- \in \mathcal{Q}_n \right) + o(1),\\
    &= \sum_{\Lambda'_n}\prob\left( \CCI_{n, \mu}^- \in \mathcal{Q}_n \;|\; \bar{A}_n = \Lambda'_n \right) \prob\left( \bar{A}_n = \Lambda'_n \right) + o(1),\\
    &= \sum_{\Lambda'_n}\prob\left( \ARD_n(T, \Lambda_n') \in \mathcal{Q}_n\right) \prob\left( \bar{A}_n = \Lambda'_n \right) + o(1).
    \end{align*}
    Note in the expression above that $\prob\left( \bar{A}_n = \Lambda'_n \right) = 0$ if there exists an unstable vertex-type $t$ or $s$ at tolerance $\tau$ for which $\Lambda_n'(t, s) > 0$. Hence, we we can simply set $\bar{A}_n(t, s) = 0$ for these vertex-types. Now, define the following set of ``desirable'' $\bar{A}_n$ realisations:
    \[
    \mathcal{L}_n = \left\{ \Lambda_n' : \left|\Lambda_n'(t, s) - \lfloor \kappa(t, s) q_t q_s n \rfloor \right| \leq n^{1/2 + \alpha} \sqrt{q_t q_s} \text{ for all } t, s \leq u_n^\uparrow(\tau)  \right\}.
    \]
    Splitting up the above sum into the values of $\Lambda_n'$ that fall within $\mathcal{L}_n$ and the ones that do not yields
    \begin{align*}
    \prob\left( \CCI_{n, \mu} \in \mathcal{Q}_n \right) \leq &\sum_{\Lambda'_n \in \mathcal{L}_n}\prob\left( \ARD_n(T, \Lambda_n') \in \mathcal{Q}_n\right) \prob\left( \bar{A}_n = \Lambda'_n \right) \\
    &+ \prob\left(\bigcup_{t = 1}^{u_n^\uparrow(\tau)} \bigcup_{s = 1}^{u_n^\uparrow(\tau)} \left\{ \left|\bar{A}_n(t, s) - \lfloor \kappa(t, s) q_t q_s n \rfloor \right| > n^{1/2 + \alpha} \sqrt{q_t q_s} \right\} \right) + o(1).
    \end{align*}
    We can find a similar lower bound by disregarding all the value of $\Lambda_n'$ that do not fall within $\mathcal{L}_n$. Hence, by using the result of Step II, we now have upper and lower bounds of the desired probability in terms of $\ARD$ probabilities given by
    \[
    \sum_{\Lambda'_n \in \mathcal{L}_n}\prob\left( \ARD_n(T, \Lambda_n') \in \mathcal{Q}_n\right) \prob\left( \bar{A}_n = \Lambda'_n \right) \leq \prob\left( \CCI_{n, \mu} \in \mathcal{Q}_n \right) \leq \sum_{\Lambda'_n \in \mathcal{L}_n}\prob\left( \ARD_n(T, \Lambda_n') \in \mathcal{Q}_n\right) \prob\left( \bar{A}_n = \Lambda'_n \right) + o(1).
    \]
    We will end the proof by showing that
    \begin{equation}\label{eq:goal Step III proof II}
        \sum_{\Lambda'_n \in \mathcal{L}_n}\prob\left( \ARD_n(T, \Lambda_n') \in \mathcal{Q}_n\right) \prob\left( \bar{A}_n = \Lambda'_n \right) \to p.
    \end{equation}
    To achieve this, we first note for all $\Lambda_n' \in \mathcal{L}_n$ that $\Lambda_n'(t, s) = 0$ when either $t$ or $s$ is unstable, and that for all other $t, s \leq u_n^\uparrow(\tau)$ we have
    \[
    \Lambda^-_n(t, s) := \lfloor \kappa(t, s) q_t q_s n \rfloor - \lfloor n^{1/2 + \alpha} \sqrt{q_t q_s} \rfloor \leq \Lambda'_n(t, s) \leq \lfloor \kappa(t, s) q_t q_s n \rfloor + \lfloor n^{1/2 + \alpha} \sqrt{q_t q_s} \rfloor =: \Lambda^+_n(t, s).
    \]
    Here, the floor in the upper-bound is valid, since $\Lambda'_n(t, s)$ must be an integer. We will now exploit monotonicity of $\mathcal{Q}_n$ to take the $\ARD$ probability out of the sum in \eqref{eq:goal Step III proof II}. We will assume without loss of generality that $\mathcal{Q}_n$ is increasing. The decreasing case is analogous. From Lemma~\ref{lem:monotonicity ARD} we know that
    \[
    \prob\left( \ARD_n(T, \Lambda_n^-) \in \mathcal{Q}_n\right) \leq \prob\left( \ARD_n(T, \Lambda_n') \in \mathcal{Q}_n\right) \leq \prob\left( \ARD_n(T, \Lambda_n^+) \in \mathcal{Q}_n\right).
    \]
    Thus, when substituting these bounds into \eqref{eq:goal Step III proof II}, removing the $\ARD$ probabilities from the sum, and computing the remaining sum, we find
    \begin{subequations}
    \begin{align}
            \prob\left( \ARD_n(T, \Lambda_n^-) \in \mathcal{Q}_n\right)\prob\left( \bar{A}_n \in \mathcal{L}_n \right) &\leq \sum_{\Lambda'_n \in \mathcal{L}_n}\prob\left( \ARD_n(T, \Lambda_n') \in \mathcal{Q}_n\right) \prob\left( \bar{A}_n = \Lambda'_n \right)\label{eq:goal III lower proof II} \\ &\leq \prob\left( \ARD_n(T, \Lambda_n^+) \in \mathcal{Q}_n\right)\prob\left( \bar{A}_n \in \mathcal{L}_n \right).\label{eq:goal III upper proof II}
    \end{align}
    \end{subequations}
    From the result of Step II we know that $\prob\left( \bar{A}_n \in \mathcal{L}_n \right) \to 1$. Thus, we now seek to invoke Theorem~\ref{thm:IRD to ARD} to show that both $\ARD$ probabilities converge to $p$. For this, we first focus on the lower-bound in \eqref{eq:goal III lower proof II} and notice that
    \[
    \Lambda^-_n(t, s) = \lfloor \kappa(t, s) q_t q_s n - \lfloor n^{1/2 + \alpha} \sqrt{q_t q_s} \rfloor \rfloor \geq \lfloor \kappa(t, s) q_t q_s n -  n^{1/2 + \alpha} \sqrt{q_t q_s} \rfloor =   \left\lfloor \left(\kappa(t, s) - \frac{n^{-1/2 + \alpha}}{\sqrt{q_t q_s}} \right) q_t q_s n \right\rfloor.
    \]   
    Now, we consider the kernel $\kappa^-_n(t, s) = \kappa(t, s) - n^{-1/2 + \alpha}/\sqrt{q_t q_s}$. Note that our initial choice of $\alpha$ and $\tau$ implies that $\alpha > 1/2 - \tau / 2$. We also have that $\alpha > \tau / 2$, since $\tau < 1/2$ and $\alpha > 3/8$. All in all, with our choices of $\alpha$ and $\tau$ we have that Assumption~\ref{ass:kernel bound} is satisfied. This is because Lemma~\ref{lem:kappa bounded} tells us for $n$ large and one unstable vertex-type that there exists a constant $c > 0$ such that
    \[
    \kappa(t, s) \leq \frac{c \sqrt{q_t q_s}}{\sqrt{q_t q_s}} \leq  \frac{c n^{-1/2 + \tau / 2}}{\sqrt{q_t q_s}} \leq \frac{c n^{-1/2 + \alpha}}{\sqrt{q_t q_s}}.
    \]
    Moreover, if we fix a kernel $\kappa'_n$ such that $|\kappa_n'(t, s) - \kappa_n^-(t, s)| \leq n^{-1/2 + \alpha}/\sqrt{q_t q_s}$, then we particularly have that
    \begin{align*}
    \left|\kappa_n'(t, s) - \kappa(t, s) \right| &\leq  \left|\kappa_n'(t, s) - n^{-1/2 + \alpha}/\sqrt{q_t q_s} - \kappa(t, s) \right| + n^{-1/2 + \alpha}/\sqrt{q_t q_s},\\
    &\leq  \left|\kappa^-_n(t,s) - \kappa(t, s) \right| + n^{-1/2 + \alpha}/\sqrt{q_t q_s} \leq 2 n^{-1/2 + \alpha}/\sqrt{q_t q_s}.
    \end{align*}
    Through the assumption in \eqref{eq:kern sequence CCI} we may now conclude that $\prob(\texttt{IRD}_n(T, \kappa'_n) \in \mathcal{Q}_n) \to p$. Hence, when we set $\hat{\Lambda}_n^-(t, s) = \lfloor \kappa^-_n(t, s) q_t q_s n \rfloor$, then we indeed find for \eqref{eq:goal III lower proof II} through an application of Theorem~\ref{thm:IRD to ARD} (with $C = 1$) that
    \[
    p = \lim_{n \to \infty } \prob\left( \ARD_n(T, \hat{\Lambda}^-_n) \in \mathcal{Q}_n\right)\prob\left( \bar{A}_n \in \mathcal{L}_n \right) \leq \limsup_{n \to \infty} \prob\left( \ARD_n(T, \Lambda_n^-) \in \mathcal{Q}_n\right)\prob\left( \bar{A}_n \in \mathcal{L}_n \right).
    \]
    The approach to show that \eqref{eq:goal III upper proof II} converges to $p$ is the same. We first note that
    \[
    \Lambda_n^+(t, s) \leq \lfloor \kappa(t, s) q_t q_s n \rfloor + n^{1/2 + \alpha} \sqrt{q_t q_s} \leq \lfloor \kappa(t, s) q_t q_s n  + n^{1/2 + \alpha} \sqrt{q_t q_s} \rfloor =  \left\lfloor \left(\kappa(t, s) + \frac{n^{-1/2 + \alpha}}{\sqrt{q_t q_s}} \right) q_t q_s n \right\rfloor.
    \]
    Now, we consider the kernel $\kappa^+_n(t, s) = \kappa(t, s) + n^{1/2 + \alpha}/\sqrt{q_t q_s}$ and note that Assumption~\ref{ass:kernel bound} is satisfied for this kernel due to Lemma~\ref{lem:kappa bounded} if we pick e.g. $C = 2$. Now, we fix an arbitrary kernel $\kappa_n'$ such that $|\kappa_n'(t, s) - \kappa_n^-(t, s)| \leq 2 n^{-1/2 + \alpha}/\sqrt{q_t q_s}$, and we note particularly that
    \begin{align*}
    \left|\kappa_n'(t, s) - \kappa(t, s) \right| &\leq  \left|\kappa_n'(t, s) + n^{-1/2 + \alpha}/\sqrt{q_t q_s} - \kappa(t, s) \right| + n^{-1/2 + \alpha}/\sqrt{q_t q_s},\\
    &\leq  \left|\kappa^+_n(t,s) - \kappa(t, s) \right| + n^{-1/2 + \alpha}/\sqrt{q_t q_s} \leq 3 n^{-1/2 + \alpha}/\sqrt{q_t q_s}.
    \end{align*}
    Through the assumption in \eqref{eq:kern sequence CCI} we may now conclude that $\prob(\texttt{IRD}_n(T, \kappa'_n) \in \mathcal{Q}_n) \to p$. Hence, when we set $\hat{\Lambda}_n^+(t, s) = \lfloor \kappa^+_n(t, s) q_t q_s n \rfloor$, then we indeed find for \eqref{eq:goal III lower proof II} through an application of Theorem~\ref{thm:IRD to ARD} (with $C = 2$) that
    \[
        \liminf_{n \to \infty} \prob\left( \ARD_n(T, \Lambda_n^+) \in \mathcal{Q}_n\right)\prob\left( \bar{A}_n \in \mathcal{L}_n \right) \leq \lim_{n \to \infty } \prob\left( \ARD_n(T, \hat{\Lambda}^+_n) \in \mathcal{Q}_n\right)\prob\left( \bar{A}_n \in \mathcal{L}_n \right) =p.
    \]
    We have now shown that the upper- and lower-bound in \eqref{eq:goal III lower proof II} and \eqref{eq:goal III upper proof II} converge to $p$. Thus, we have now shown through \eqref{eq:goal Step III proof II} that indeed $\prob(\CCI_{n, \mu}(T, C, I, J) \in \mathcal{Q}_n) \to p$.
\end{proof}

\begin{remark}
    Lemmas~\ref{prob:unstable}, \ref{lem:arc vertex prob stable} and \ref{lem:arc vertex prob no stable} were not directly used in the proof of Theorem~\ref{thm:CCI to IRD}. These are all needed to prove the ``main technical lemma'' of this section: Lemma~\ref{lem:deviation A double hat}.
\end{remark}

\section{Proofs of propositions and lemmas}\label{sec:tool proof}

We end this paper by giving the proofs of all lemmas and propositions that were stated in the main text. We will start with the proofs of all proposition in Section~\ref{sec: applications}, then give the proofs of all lemmas in Section~\ref{sec:proof thm I}, and finally give the proofs of all lemmas in Section~\ref{sec:proof II}.

\subsection{Proofs of propositions}\label{app:extra proofs}
\begin{proof}[Proof of Proposition~\ref{prop:ass CL}]
When $\expec[W^{1 + \varepsilon}] < \infty$, we know that $\sum_{t = 1}^\infty t^{1 + \varepsilon} \cdot q_t < \infty$. This has the following two consequences:
\begin{enumerate}\itemsep0em
    \item There exists $t_1$ for which $(q_t)_{t \geq t_1}$ is decreasing sequence.
    \item There exists a $t_2$ for which $q_t < t^{-2 - \varepsilon}$ for all $t \geq t_2$.
\end{enumerate}

Define $t^\downarrow := \max\{t_1, t_2\}$, fix some $n \in \mathbb{N}$ large and two vertex-types $t, s \in \mathbb{N}$. Suppose that $t > t^\downarrow$, then from consequence 2. we have that $q_t^{-1/(2 + \varepsilon)} > t$. This yields the following lower-bound:
\begin{equation}\label{eq:unstable lower CL}
    q_t^{-1/2} = q_t^{- \frac{\varepsilon}{2(2 + \varepsilon)}} \cdot q_t^{- \frac1{2 + \varepsilon}} \geq q_t^{- \frac{\varepsilon}{2(2 + \varepsilon)}} \cdot t.
\end{equation}
Now, assume without loss of generality that vertex-type $t$ is unstable at some unspecified tolerance $\tau$. According to Definition~\ref{def:stable vertices}, for this type $t$ we must have $q_t < n^{-1 + \tau}$. Since this upper-bound converges to zero, we must also have that $u_n^\uparrow(\tau) \to \infty$ as $n \to \infty$. Hence, there exists a $n$ large enough for which $u_n^\uparrow > t^\downarrow$, implying \eqref{eq:unstable lower CL} is satisfied for every unstable vertex-type.

Now, focus on the possibly stable vertex type $s$. If it happens that $s > t^\downarrow$, then \eqref{eq:unstable lower CL} is valid for type $s$ too. It is only possible that \eqref{eq:unstable lower CL} is not valid for type $s$ when $s \leq t^\downarrow$. However, since $t^\downarrow$ is independent from $n$, we can choose a $C > 0$ large enough such that
\begin{equation}\label{eq:stable lower CL}
    C q_s^{-1/2} \geq s / \expec[W] \qquad \text{for all } s \leq t^\downarrow.
\end{equation}
By noting in \eqref{eq:unstable lower CL} that $ q_t^{- \frac{\varepsilon}{2(2 + \varepsilon)}} \cdot t \geq t$, we may conclude that \eqref{eq:stable lower CL} holds for all $s \in \mathbb{N}$. Now, it is time to lower-bound \eqref{eq:kernel bound} in Assumption~\ref{ass:kernel bound} with the aforementioned value of $C$. We find
\begin{align*}
    \frac{C n^{\alpha - 1/2}}{\sqrt{q_t q_s}} = \frac{C}{\sqrt{q_s}} \cdot \frac{n^{\alpha - 1/2}}{\sqrt{q_t}} \geq \frac{ts}{\expec[W]} \cdot n^{\alpha - 1/2} \cdot q_t^{- \frac{\varepsilon}{2(2 + \varepsilon)}} = \kappa(t, s) \cdot n^{\alpha - 1/2} \cdot q_t^{- \frac{\varepsilon}{2(2 + \varepsilon)}}.
\end{align*}
Here, we used \eqref{eq:unstable lower CL} and \eqref{eq:stable lower CL} at the inequality. We will end the proof by showing that $n^{\alpha - 1/2} \cdot q_t^{- \frac{\varepsilon}{2(2 + \varepsilon)}} \geq 1$ for some choice of $\tau, \alpha > 0$, after which Assumption~\ref{ass:kernel bound} will be satisfied. First, we will use instability of $t$ to bound
\[
n^{\alpha - 1/2} \cdot q_t^{- \frac{\varepsilon}{2(2 + \varepsilon)}} \geq n^{\alpha - 1/2} \cdot \left( n^{1 - \tau} \right)^{\frac{\varepsilon}{2(2 + \varepsilon)}} = n^{\frac{\varepsilon}{2(2 + \varepsilon)} + \alpha - 1/2 - \frac{\varepsilon \tau}{2(2 + \varepsilon)}}.
\]
If an admissable pair $\alpha, \tau > 0$ exists, then from Assumption~\ref{ass:kernel bound} it follows that $\alpha - 1/2 > \tau/2$. Now take any $0 < \tau < \varepsilon/(2+2\varepsilon) < 1/2$. Then we have that
\[
	\frac{\varepsilon}{2(2 + \varepsilon)} + \alpha - 1/2 - \frac{\varepsilon \tau}{2(2 + \varepsilon)} 
	\ge \frac{\varepsilon - \tau(2 + 2\varepsilon)}{2(2+\varepsilon)} >0,
\]
which implies that $n^{\alpha - 1/2} \cdot q_t^{- \frac{\varepsilon}{2(2 + \varepsilon)}} \geq 1$.
\end{proof}

\begin{proof}[Proof of Proposition~\ref{prop:IRD regularity inf}]
    As the probability measure $\nu$ we can simply take the Borel measure that assigns the probabilities $\nu(\{t\}) = q_t$ for all $t \in \mathbb{N}$. Then, by the weak law of large numbers condition (a) is satisfied. Moreover, since $\mathbb{N}$ is a discrete space, we also have that the continuity conditions in (b) and (c) are satisfied. Hence we are left to show that
    \begin{enumerate}[label = \Roman*.]\itemsep0em
    \item $\varphi_n(t, s) := (\kappa'_n(t, s) - \kappa(t, s))/\kappa(t, s) \to 0$ as $n \to \infty$, and
    \item $\displaystyle \lim_{n \to \infty } \frac1{n^2} \expec\left[ \sum_{v = 1}^n \sum_{w = 1}^n \kappa(T_v, T_w) \right] = \lim_{n \to \infty} \frac1{n^2} \expec\left[ \sum_{v = 1}^n \sum_{w \neq v} \kappa'_n(T_v, T_w) \right] = \sum_{t = 1}^\infty \sum_{s = 1}^\infty \kappa(t, s) q_t q_s < \infty.$
    \end{enumerate}

    \paragraph{Part I.} First note from the definition of $\kappa'_n$ that for fixed $t, s \in \mathbb{N}$ we have
    \begin{equation}\label{eq:varphi bound}
    - \frac{C n^\alpha}{\kappa(t, s) \sqrt{q_t q_s n}} \leq \varphi_n(t, s) \leq \frac{C n^\alpha}{\kappa(t, s) \sqrt{q_t q_s n}}.
    \end{equation}
    Note that for fixed $t, s \in \mathbb{N}$ the values of $q_t, q_s$ and $\kappa(t, s)$ are deterministic, so the lower- and upper-bounds are deterministic as well. We only have a problem whenever $\kappa(t, s)$, $q_t$ or $q_s$ are zero. But, if $q_t$ or $q_s$ is zero, then the vertex-type cannot exist, meaning we can remove it from the model. Alternatively, when $\kappa(t,s) = 0$, we can simply take $\varphi_n(t, s) = 0$, since $\varphi_n$ is part of a multiplicative factor in $\texttt{IRD}_n$ that is multiplied by $\kappa$. In other words, if $\kappa(t, s) = 0$, the value of $\varphi_n(t,s)$ does not matter. Finally, since $\alpha < 1/2$ we have that these bounds both converge to zero, letting us conclude that $\varphi_n(t, s) \to 0$ almost surely (in $\mathbb{P}_n$).

    \paragraph{Part II -- Rightmost sum.} 
    We first show that the last sum is finite. For this we will substitute the definition of $\kappa$ and compute the result to show convergence.
    \begin{align*}
        \sum_{t = 1}^\infty \sum_{s = 1}^\infty \kappa(t, s) q_t q_s &= \sum_{t = 1}^\infty \sum_{s = 1}^\infty \sum_{i = 1}^\infty \sum_{j = 1}^\infty \frac{\mu p_{ij}  I(t, i) J(s, j)  q_t q_s}{\lambda_i \varrho_j},\\
        &= \sum_{i = 1}^\infty \sum_{j = 1}^\infty \sum_{t = 1}^\infty \sum_{s = 1}^\infty \frac{\mu p_{ij} I(t, i) J(s, j) q_t q_s}{\lambda_i \varrho_j},\\
        &= \mu \sum_{i = 1}^\infty \sum_{j = 1}^\infty \lambda_i^{-1} \varrho_j^{-1} p_{ij} \sum_{t = 1}^\infty I(t, i) q_t \sum_{s = 1}^\infty J(s, j) q_s,\\
        &= \mu \sum_{i = 1}^\infty \sum_{j = 1}^\infty \lambda_i^{-1} \varrho_j^{-1} p_{ij} \lambda_i \varrho_j,\\
        &= \mu \sum_{i = 1}^\infty \sum_{j = 1}^\infty p_{ij} = \mu < \infty.
    \end{align*}
    Note in the second line we swapped the order of summation. This is possible due to the fact that all terms in the sum are non-negative, and Tonelli's theorem. 

    \paragraph{Part II -- Leftmost limit.} We will show the leftmost limit in II equals $\mu$ too. Using the fact that $(T_v)_{v \geq 1}$ is an i.i.d. distributed sequence, and the law of total expectation yields
    \begin{align*}
    \frac1{n^2} \expec\left[ \sum_{v = 1}^n \sum_{w = 1}^n \kappa(T_v, T_w) \right] &= \frac{n(n - 1)}{n^2} \cdot \expec[\kappa(T_1, T_2)] + \frac{1}{n} \cdot \expec[\kappa(T_1, T_1)],\\
    &= \frac{n(n - 1)}{n^2} \cdot \sum_{t = 1}^\infty \sum_{s = 1}^\infty \kappa(t, s) q_t q_s + \frac{1}{n} \cdot \sum_{t = 1}^\infty \kappa(t, t) q_t ,\\
    &= \mu \cdot \frac{n(n - 1)}{n^2} + \frac{1}{n} \cdot \sum_{t = 1}^\infty \kappa(t, t) q_t .
    \end{align*}
    Now, Lemma~\ref{lem:kappa bounded} shows that $\kappa$ is bounded. Hence, for some $\widehat{C} > 0$ we have
    \[
    0 \leq \frac{1}{n} \cdot \sum_{t = 1}^\infty \kappa(t, t) q_t \leq \frac{\widehat{C}}{n} \sum_{t = 1}^\infty q_t = \frac{\widehat{C}}{n} \to 0. 
    \]
    Thus, we indeed have that
    \[
    \lim_{n \to \infty} \frac1{n^2} \expec\left[ \sum_{v = 1}^n \sum_{w = 1}^n \kappa(T_v, T_w) \right] = \lim_{n \to \infty} \left[ \mu \cdot \frac{n(n - 1)}{n^2} + \frac{1}{n} \cdot \sum_{t = 1}^\infty \kappa(t, t) q_t  \right] = \mu + 0 = \mu.
    \]
    \paragraph{Part II -- Middle limit.} We show that the middle limit equals $\mu$ too. We split up the expectation into a sum over the kernel. Using the same considerations as above, we find
    \[
    \frac1{n^2} \expec\left[ \sum_{v = 1}^n \sum_{w \neq v} \kappa'_n(T_v, T_w) \right] = \mu \cdot \frac{n^2 - n}{n^2} + \expec[\varphi_n(T_1, T_2)] \cdot \frac{n^2 - n}{n^2}.
    \]
    We will now bound the expected value of $\varphi_n$ by using the law of total expectation and \eqref{eq:varphi bound}. 
    \begin{equation}\label{eq:exp phi bound}
    -C n^{\alpha - 1/2} \sum_{t = 1}^\infty \sqrt{q_t} \sum_{s = 1}^\infty \sqrt{q_s} \leq \expec[\varphi_n(T_1, T_2)] \leq \sum_{t = 1}^\infty \sum_{s = 1}^\infty \frac{C n^\alpha q_t q_s}{\sqrt{q_t q_s n}} = C n^{\alpha - 1/2} \sum_{t = 1}^\infty \sqrt{q_t} \sum_{s = 1}^\infty \sqrt{q_s}. 
    \end{equation}
    Now, because we have assumed that $\expec[T^{1 + \varepsilon}] < \infty$, we have for $t$ large that $q_t < t^{-2 - \varepsilon}$. Hence, $\sqrt{q_t} < t^{-1 - \varepsilon/2}$, meaning the sums in \eqref{eq:exp phi bound} are finite. Thus, for some $\widetilde{C} > 0$ we have that
    \[
    -\widetilde{C} n^{\alpha - 1/2} \leq \expec[\varphi_n (T_1, T_2)] \leq \widetilde{C} n^{\alpha - 1/2}.
    \]
    Since $\alpha < 1/2$, both these terms converge to zero. All together, this shows that
    \[
    \lim_{n \to \infty} \frac1{n^2} \expec\left[ \sum_{v = 1}^n \sum_{w \neq v} \kappa'_n(T_v, T_w) \right] = \lim_{n \to \infty} \left[ \mu \cdot \frac{n^2 - n}{n^2} + \expec[\varphi_n(T_v, T_w)] \cdot \frac{n^2 - n}{n^2} \right] = \mu \cdot 1 + 0 \cdot 1 = \mu.
    \]
\end{proof}

\begin{proof}[Proof of Proposition~\ref{prop:GSCC}]
    Denote by $\CCI_{n, \mu}^-$ the version of $\CCI_{n, \mu}$ after removing all arcs from or to an unstable vertex. In this model, let $\mathcal{C}_{(i)}^-$ denote the $i$-th largest strongly connected component. Finally, let $A_n^\uparrow$ denote the number of unstable arcs. We split up the proof in the following steps.
    \begin{enumerate}[label = \Roman*.]\itemsep0em
        \item We show that with high probability for some constant $p \in [0, 1)$ we have $A_n^\uparrow \leq n^{p}$.
        \item We show that \[\frac1n \left| \bigcup_{i = 1}^{n^{p}} \mathcal{C}_{(i)}^- \right| \to \alpha,\] in probability.
        \item We use the above two points to show that $|\mathcal{C}_{\max}|/n \to \alpha$ in probability as well.
    \end{enumerate}

    \paragraph{Step I.} Recall that the total number of arcs in $\CCI_{n, \mu}$ is $\lfloor \mu n \rfloor$. Thus, we can write (cf. Definition~\ref{def:vertex type arc count}) \[A_n^\uparrow = \lfloor \mu n \rfloor - \sum_{t = 1}^{u_n^\uparrow} \sum_{s = 1}^{u_n^\uparrow} \bar{A}_n(t, s). \] Then, using \eqref{eq:upgrade step II proof II} we can bound with high probability \[
    A^\uparrow_n \leq \mu n - \sum_{t = 1}^{u_n^\uparrow} \sum_{s = 1}^{u_n^\uparrow} \left[  \lfloor \kappa(t, s) q_t q_s n \rfloor - n^{1/2 + \alpha} \sqrt{q_t q_s} \right] \leq  \mu n - \sum_{t = 1}^{u_n^\uparrow} \sum_{s = 1}^{u_n^\uparrow} \left[   \kappa(t, s) q_t q_s n - 1 - n^{1/2 + \alpha} \sqrt{q_t q_s} \right].
    \]
    Since $\mathbb{E}[T^{1 + \varepsilon}] < \infty$, we know that the sum over $\sqrt{q_t}$-terms converges. Hence, when we compute all the negative sums, we find that there exists a constant $C > 0$ such that
    \[
    A^\uparrow_n \leq \mu n - \sum_{t = 1}^{u_n^\uparrow} \sum_{s = 1}^{u_n^\uparrow}   \kappa(t, s) q_t q_s n  + C n^{1/2 + \alpha} + (u_n^\uparrow)^2  .
    \]
    Using Lemma~\ref{lem:no stable} can now conclude that
    \[
    A^\uparrow_n \leq \mu n - \sum_{t = 1}^{u_n^\uparrow} \sum_{s = 1}^{u_n^\uparrow}  \kappa(t, s) q_t q_s n  + C n^{1/2 + \alpha} + n^{1 - \tau}  .
    \]
    We note that the remaining double sum (when summing over \emph{all} vertex-types) adds up to $\mu n$. Thus, we can bound the first two terms in the current upper-bound on $A_n^\uparrow$ to find
    \[
    A^\uparrow_n \leq \sum_{t = 1}^\infty \sum_{s = u_n^\uparrow + 1}^\infty \kappa(t, s) q_t q_s n + \sum_{t = u_n^\uparrow + 1}^\infty \sum_{s = 1}^\infty \kappa(t, s) q_t q_s n +  C n^{1/2 + \alpha} + n^{1 - \tau}.
    \]
    By applying Lemma~\ref{lem:kappa bounded} and the fact that the $q_t$-terms are probabilities, there exists a constant $\kappa^+ > 0$ such that
    \[
    A^\uparrow_n \leq 2 \kappa^+ n \prob(T > u_n^\uparrow) +  C n^{1/2 + \alpha} + n^{1 - \tau}.
    \]
    When we finally apply Lemma~\ref{prob:unstable} we find that there exists an overarching constant $\widehat{C} > 0$ such that
    \[
    A_n^\uparrow \leq \widehat{C} \left(n^{1+\frac{(\tau - 1)\varepsilon}{(2 + \varepsilon)}} +  n^{1/2 + \alpha} + n^{1 - \tau}  \right).
    \]
    Because $\tau \in (0, 1)$ and $\alpha < 1/2$, the result follows.
    
    \paragraph{Step II.} Let $\delta > 0$ be an arbitrary constant and set \[S_n := \left| \bigcup_{i = 1}^{n^{p}} \mathcal{C}_{(i)}^- \right|. \] Consider the event $\mathcal{Q}_n^-(\delta) := \{S_n > n \delta\}$. Note, if we were to add an extra arc to a graph $G$, then it will either not change the sizes of its strongly connected components, or merge two strongly connected components into one. In both cases, the ordered list of strong connected component sizes will change such that the size of the $i$-th largest strongly connected component before the added edge is smaller than or equal to the size of the $i$-largest connected component after adding the edge. Thus, we may conclude that $\mathcal{Q}_n^-(\delta)$ is increasing.

    Secondly, if we look at pairs of stable vertex-types in $\CCI_{n, \mu}^-$, then we note that all concentration lemmas (like e.g. Lemma~\ref{lem:deviation A double hat}) are still true, since probabilities of arcs being assigned to these vertex-types do not change. Arcs are just thrown away if they happen to be assigned to unstable vertex-types. Thus, we can use the result of Theorem~\ref{thm:CCI to IRD} to this slightly adapted model as well. Of course, Assumption~\ref{ass:CCI unstable vertices} is trivially satisfied for this model.

    Thirdly, suppose $\kappa_n'$ is a function that adheres to \eqref{eq:kern sequence CCI}. Denote by $|\mathcal{C}_{(i)}^{\IRD}|$ the $i$-th largest strongly connected component in $\IRD_n(T, \kappa_n')$, and by $S_n^{\IRD}$ its corresponding version of $S_n$. Then, due to Proposition~\ref{prop:IRD regularity inf} we can apply Theorem 3.9 in \cite{Cao2019OnDigraphs} to conclude that
    \begin{equation}\label{eq:subtr convergence GSCC}
    |\mathcal{C}_{(1)}^{\IRD}| / n \to \sum_{x = 1}^\infty \pi^+_x \pi^-_x q_x =: \alpha,
    \end{equation}
    in probability, where $q_x = \prob(T = x)$ and $\pi^\pm_x$ are defined through \eqref{eq:pi+} and \eqref{eq:pi-}. Moreover, by applying Theorem 3.11 in \cite{Cao2019OnDigraphs} we may conclude that $|\mathcal{C}_{(2)}^{\IRD}| \leq \log(n)^2$ with high probability, because otherwise it would be part of the giant. Thus, we may conclude that \[\left| \bigcup_{i = 2}^{n^{p}} \mathcal{C}_{(i)}^{\IRD} \right| \leq n^{p} \log(n)^2, \] with high probability. This means that indeed $S_n^{\IRD}/n \to \alpha$ in probability too.

    Combining these three points, using Theorem~\ref{thm:CCI to IRD}, we can conclude that $\prob(S_n \leq \delta n) \to 0$ when $\delta < \alpha$ and $\prob(S_n \leq \delta n ) \to 1$ when $\delta \geq \alpha$. Together, this means that $S_n/n \to \alpha$ in distribution, allowing us to conclude that $S_n/n \to \alpha$ in probability. Moreover, with the same argument we can also conclude from \eqref{eq:subtr convergence GSCC} that $|\mathcal{C}_{(1)}^-|/n \to \alpha$ in probability. 

    \paragraph{Step III.} We note from Step I that at most $n^{p}$ extra edges get added connecting to at least one unstable vertex in $\CCI_{n, \mu}$ with high probability. Each of these arcs can do one of two things:
    \begin{enumerate}
        \item Add all unstable vertices to the largest strongly connected component in $\CCI_{n, \mu}$.
        \item (Indirectly) connect $\mathcal{C}_{(i)}$ for some $i > 1$ to the largest strongly connected component in $\CCI_{n, \mu}$.
    \end{enumerate}
    If we denote by $N_n^\uparrow$ the number of unstable vertices, then the above two points show that with high probability we have that $|\mathcal{C}_{(1)}^-| \leq |\mathcal{C}_{\max}| \leq S_n + N_n^\uparrow$. Now, we will show that $N_n^\uparrow$ is sub-linear. For this, we recall from Lemma~\ref{prob:unstable} and assumption~\ref{ass:CCI} that
    \[
    \prob(T > u_n^\uparrow(\tau)) \leq n^{\frac{(\tau - 1) \varepsilon}{2 + \varepsilon}}.
    \]
    Thus, we have that \[N_n^\uparrow \preceq \texttt{Bin}\left(n, n^{\frac{(\tau - 1) \varepsilon}{2 + \varepsilon}}\right).\]
    By Chebyshev's inequality it holds that with high probability \[N_n^\uparrow \leq n^{1 + \frac{(\tau - 1) \varepsilon}{2 + \varepsilon}} + n^{3/4 + \frac{(\tau - 1) \varepsilon}{4 + 2\varepsilon}} = o(n).\]
    Thus, we can conclude that $N_n^\uparrow/n \to 0$ in probability. Together with the results of step II we now have that $|\mathcal{C}_{(1)}^-|/n \to \alpha$ and $(S_n + N_n^\uparrow)/n \to \alpha$ in probability, implying that also $|\mathcal{C}_{\max}|/n \to \alpha$ in probability.
\end{proof}

\subsection{Proofs of lemmas for Theorem~\ref{thm:IRD to ARD}}\label{sec:tool proof thm I}

\begin{proof}[Proof of Lemma~\ref{lem:no stable}]
    Since $\expec[T^{\delta}] < \infty$ we have that $\sum_{t = 1}^\infty t^{\delta} q_t < \infty$.
    In particular, this means (for $t$ large) that $q_t \leq t^{-1 - \delta}$. Moreover, since $q_t \to 0$ as $t \to \infty$ we must have that $u_n^\uparrow \to \infty$ as $n \to \infty$. Thus, we know (for $n$ large) that
    \[
    u_n^\uparrow(\tau) \leq \widehat{u}_n^\uparrow(\tau) := \inf\{ t : s^{-1 - \delta} < n^{-1 + \tau} \text{ for all } s \geq t\}.
    \]
    We can now calculate the value of $\widehat{u}_n^\uparrow(\tau)$ to find the desired result. We have for all $t$ that
    \[
    t^{-1 - \delta} < n^{-1 + \tau} \iff n^{(1 - \tau)/(1 + \alpha)} < t.
    \]
    Hence, we can conclude that
    \[
    u_n^\uparrow(\tau) \leq \widehat{u}_n^\uparrow(\tau)  = \left\lceil n^{(1 - \tau)/(1 + \delta)} \right\rceil.
    \]
\end{proof}

\begin{proof}[Proof of Lemma~\ref{lem:well-concentrated types}]
    Fix a pair $t, s \in \mathcal{S}$. First note we can rewrite the event we are interested in as follows:
    \[
    \neg \mathcal{V}_{ts} = \{|N_t - q_tn| > \log(n) \sqrt{q_t n}\} \cup \{|N_s - q_sn| > \log(n) \sqrt{q_s n}\}.
    \]
    By applying the union bound, we can then bound
    \[
    \prob(\neg \mathcal{V}_{ts} ) \leq \prob(|N_t - q_tn| > \log(n) \sqrt{q_t n}) + \prob(|N_s - q_sn| > \log(n) \sqrt{q_s n}).
    \]
    We will now show the result for the first probability, the argument for the second probability will be analogous. We write
    \begin{equation}\label{eq:symm split chernoff}
         \prob(|N_t - q_tn| > \log(n) \sqrt{q_t n}) =  \prob(N_t >  q_tn + \log(n) \sqrt{q_t n}) + \prob(N_t <  q_tn - \log(n) \sqrt{q_t n}). 
    \end{equation}
    By noting that $N_t \sim \texttt{Bin}(n, q_t)$ we can apply the Chernoff bound (see \cite{Hofstad2016RandomNetworks} Theorem 2.21) on both probabilities to find 
    \[
    \prob(|N_t - q_tn| > \log(n) \sqrt{q_t n}) \leq 2 \exp(-\log(n)^2 / 2).
    \]
\end{proof}

\begin{proof}[Proof of Lemma~\ref{lem:undershoot mixed binomials}]
     Fix two stable vertex types $t, s \in \mathcal{S}$ and a $\kappa'_n(t, s)$. We define \[\kappa^+_n(t, s) = \kappa(t, s) + C n^{-1/2 + \alpha}/\sqrt{q_t q_s}.\] Note that $\kappa^+_n(t, s) \leq \kappa'_n(t, s)$. Hence, by recalling Definition~\ref{def:arc to vertex type}, we have the following stochastic bound.
    \[
    A_n(t, s) \succeq A^+_n(t, s) \sim \texttt{Bin}(N_t N_s, \kappa^+_n(t, s) / n).
    \]
    From this stochastic bound we may conclude that
    \[
    \prob( A_n(t, s) < \Lambda_n(t, s)) \leq \prob(A_n^+(t, s) < \Lambda_n(t, s)).
    \]
    We will now show that the desired bound holds for $A_n^+(t, s)$. We do this in the following steps:
    \begin{enumerate}[label = \Roman*.]
        \item We intersect the event $\{A_n^+(t, s) < \Lambda_n(t, s)\}$ with $\mathcal{V}_{ts}$ and use the law of total probability to transform the mixed-binomial probability into several binomial ones where $\mathcal{V}_{ts}$ is satisfied.
        \item We show that on these binomial probabilities the Chernoff bound may be applied.
        \item We apply the Chernoff bound to achieve an upper-bound, and we show that this upper-bound converges to zero with the rate we require.
    \end{enumerate}

    \paragraph{Step I.} Intersecting with $\mathcal{V}_{ts}$ yields
    \[
    \prob(A_n^+(t, s) < \Lambda_n(t, s)) \leq \prob(\{A_n^+(t, s) < \Lambda_n(t, s)\}\cap \mathcal{V}_{ts}) + \prob(\neg \mathcal{V}_{ts}).
    \]
    Applying Lemma~\ref{lem:well-concentrated types} shows that
    \[
    \prob(A_n^+(t, s) < \Lambda_n(t, s)) \leq \prob(\{A_n^+(t, s) < \Lambda_n(t, s)\}\cap \mathcal{V}_{ts}) + 2 \exp(-\log(n)^2 / 2).
    \]
    Thus, the lemma is true when the leftover probability converges to zero with a faster rate than $ \exp(-\log(n)^2 / 2)$. We will now apply the law of total probability on this leftover probability, conditioning on the value of $N_t$ and $N_s$, to find
    \begin{equation}\label{eq:Law total prob undershoot}
    \prob(\{A_n^+(t, s) < \Lambda_n(t, s)\}\cap \mathcal{V}_{ts}) = \expec[\prob(\{A_n^+(t, s) < \Lambda_n(t, s)\}\cap \mathcal{V}_{ts} \;|\; N_t, N_s)].
    \end{equation}
    For the remainder of the proof we will focus on $\prob(\{A_n^+(t, s) < \Lambda_n(t, s)\}\cap \mathcal{V}_{ts} \;|\; N_t, N_s)$. Note this probability is zero when $N_t$ and $N_s$ are such that $\mathcal{V}_{ts}$ does not hold. If $\mathcal{V}_{ts}$ does hold we can remove the condition, and rewrite the probability as
    \begin{equation*}
    \prob(A_n^+(t, s) - \expec[A_n^+(t, s) \;|\; N_t, N_s] < \Lambda_n(t, s) - \expec[A_n^+(t, s) \;|\; N_t, N_s]\;|\; N_t, N_s).
    \end{equation*}
    Since under the conditioning $A_n^+(t, s)$ is a binomial random variable, we know that $\expec[A_n^+(t, s) \;|\; N_t, N_s] = \kappa^+_n(t, s) N_t N_s / n$. Thus, by setting $\theta(t, s) = \Lambda_n(t, s) - \kappa^+_n(t, s) N_t N_s / n$ we find that the probability we seek to control equals
    \begin{equation}\label{eq:binom deviation mixed binom proof}
    \prob(A_n^+(t, s) - \expec[A_n^+(t, s) \;|\; N_t, N_s] < \theta(t, s) \;|\; N_t, N_s).
    \end{equation}
    
    \paragraph{Step II.} To apply the Chernoff bound (i.e., Theorem 2.21 in \cite{Hofstad2016RandomNetworks}) we need to show that $\theta(t, s)$ is negative. First we bound
    \[
    \theta(t, s) = \lfloor n q_t q_s \kappa(t, s) \rfloor - \frac{\kappa^+_n(t, s) N_t N_s}{n}\leq n q_t q_s \kappa(t, s) - \frac{\kappa(t, s) N_t N_s}{n} - \frac{C n^{-1/2 + \alpha}N_t N_s}{n \sqrt{q_t q_s}}.
    \]
    Recall that we only consider settings in which $\mathcal{V}_{ts}$ is satisfied. Thus, we have that $N_t \geq q_t n - \log(n) \sqrt{q_t n}$ and $N_s \geq q_s n - \log(n) \sqrt{q_s n}$. Using these facts on the already existing upper-bound of $\theta(t, s)$ yields the larger upper-bound    
    \begin{align}
        & \kappa(t, s) q_t q_s n - \frac{\kappa(t, s) (q_t n - \log(n) \sqrt{q_t n} ) (q_s n - \log(n) \sqrt{q_s n})}{n} - \frac{C (q_t n - \log(n) \sqrt{q_t n} ) (q_s n - \log(n) \sqrt{q_s n})}{n^{3/2 - \alpha} \sqrt{q_t q_s}},\nonumber \\
        &\leq \kappa(t, s) \log(n) n^{1/2} (\sqrt{q_t} + \sqrt{q_s}) \sqrt{q_t q_s} - C n^{1/2 + \alpha} \sqrt{q_t q_s} + C \log(n) n^\alpha (\sqrt{q_t} + \sqrt{q_s}),\nonumber \\
        &= \sqrt{q_t q_s} \left( \kappa(t, s) \log(n) n^{1/2} (\sqrt{q_t} + \sqrt{q_s}) - C n^{1/2 + \alpha} + C \log(n) n^\alpha \cdot \frac{\sqrt{q_t} + \sqrt{q_s}}{\sqrt{q_t q_s}}  \right). \label{eq:goal upper binom}
    \end{align}
    In the final inequality we have removed all additional negative terms, and noted that the term $\kappa(t, s) q_t q_s n$ cancels out. We now deal with the two remaining positive terms and show they are dominated by the negative term. We first deal with the last positive term. Since we have assumed $t, s \leq u_n^\uparrow(\tau)$ we have that $q_t, q_s \geq n^{-1 + \tau}$. Thus, we have that
    \begin{equation}\label{eq:sqrt bound}
    \frac{\sqrt{q_t} + \sqrt{q_s}}{\sqrt{q_t q_s}} \leq q_t^{-1/2} + q_s^{-1/2} \leq 2 n^{1/2 - \tau / 2}.
    \end{equation}
    If we consider the first positive term, we note from Assumption~\ref{ass:kernel bound} that it implies $\kappa(t, s) \leq 1/\sqrt{q_t q_s}$ for $t, s \to \infty$. Hence, for $n$ large we have through \eqref{eq:sqrt bound} that
    \[
    \kappa(t, s) (\sqrt{q_t} + \sqrt{q_s} ) \leq \frac{\sqrt{q_t} + \sqrt{q_s}}{\sqrt{q_t q_s}} \leq 2 n^{1/2 - \tau /2}.
    \]
    Substituting this, together with \eqref{eq:sqrt bound}, back into \eqref{eq:goal upper binom} yields (for $n \to \infty$)
    \[
    \theta(t, s) \leq \sqrt{q_t q_s} \left( \log(n) n^{1 - \tau/2} - C n^{1/2 + \alpha} + C \log(n) n^{\alpha + 1/2 - \tau / 2} \right).
    \]
    Recall from Assumption~\ref{ass:kernel bound} that $1/2 - \tau/2 < \alpha < 1/2$. Thus, in particular we have that $1/2 + \alpha > 1 - \tau/2$, meaning it is the dominant term. In conclusion, we will have for some $\widehat{C} > 0$ that
    \[
    \theta(t, s) \leq - \widehat{C} n^{1/2 + \alpha} \sqrt{q_t q_s}.
    \]
    This is negative, and hence the Chernoff bound can be applied.

    \paragraph{Step III.} With the result from Step II we may now apply the Chernoff bound on \eqref{eq:binom deviation mixed binom proof} to find that
    \begin{equation}\label{eq:cher applied undershoot}
        \prob(A_n^+(t, s) < \Lambda_n(t, s) \;|\; N_t, N_s) \leq \exp\left( - \frac{\widehat{C}^2 n^{1 + 2 \alpha} q_t q_s}{2 \expec[A_n^+(t, s) \;|\; N_t, N_s]} \right).
    \end{equation}
    We now need to find a useful upper-bound on $\expec[A_n^+(t, s) \;|\; N_t, N_s]$ in order to show \eqref{eq:cher applied undershoot} converges to zero. To do this, we first recall that $\mathcal{V}_{ts}$ is satisfied, and hence that we can use a bound similar to the one applied in Step II. This approach shows $\expec[A_n^+(t, s) \; | \; N_t, N_s]$ can be upper-bounded by
    \[
    \frac{\kappa(t, s) (q_t n + \log(n) \sqrt{q_t n} ) (q_s n + \log(n) \sqrt{q_s n})}{n} + \frac{C (q_t n + \log(n) \sqrt{q_t n} ) (q_s n + \log(n) \sqrt{q_s n})}{n^{3/2 - \alpha} \sqrt{q_t q_s}}.
    \]
    Since $t, s \leq u_n^\uparrow$, note that $q_t n \geq \log(n) \sqrt{q_t n}$ and $q_s n \geq \log(n) \sqrt{q_s n}$. Recall also that $\kappa(t, s) \leq 1/\sqrt{q_t q_s}$ if Assumption~\ref{ass:kernel bound} is satisfied. Using these facts, the previously derived upper-bound becomes
    \begin{equation*} \label{eq:expec undershoot upper}
        \expec[A_n^+(t, s) \; | \; N_t, N_s] \leq 4 \sqrt{q_t, q_s} + 4 C n^{1/2 + \alpha} \sqrt{q_t q_s} \leq 5 C n^{1/2 + \alpha} \sqrt{q_t q_s}.
    \end{equation*}
    We will now substitute this upper-bound into \eqref{eq:cher applied undershoot} and show it still converges to zero. In this computation we will use the fact that for the stable vertex-type $t$ (at tolerance $\tau$) it is true that $q_t \geq n^{-1 + \tau}$. We find
    \[
    \prob(A_n^+(t, s) < \Lambda_n(t, s) \;|\; N_t, N_s) \leq \exp\left( - \frac{\widehat{C}^2 n^{1/2 + \alpha} \sqrt{q_t q_s}}{10} \right) \leq  \exp\left( - \frac{\widehat{C}^2 n^{-1/2 + \alpha + \tau}}{10} \right).
    \]
        
    By recalling from Assumption~\ref{ass:kernel bound} that $\alpha > 1/2 - \tau$, we find that there exists a number $\nu > 0$ such that
    \[
    \prob(A_n^+(t, s) < \Lambda_n(t, s) \;|\; N_t, N_s) \leq \exp\left( - n^\nu \right).
    \]
    Substituting this back into \eqref{eq:Law total prob undershoot} and noting that this bound is uniform in $N_t$ and $N_s$ yields
    \[
    \prob(\{A_n^+(t, s) < \Lambda_n(t, s)\} \cap \mathcal{V}_{t, s}) = \exp\left(-n^\nu\right).
    \]
    Hence, for the original target probability from Step I we find
    \[
    \prob(A_n^+(t, s) < \Lambda_n(t, s)) \leq \exp\left(-n^\nu \right) + 2 \exp(-\log(n)^2 / 2) \leq 2 \exp(-\log(n)^2 / 2).
    \]
    Hence, indeed we find that the statement is true.
\end{proof}

\begin{proof}[Proof of Lemma~\ref{lem:overshoot mixed binomials}]
    This proof is similar to the proof of Lemma~\ref{lem:undershoot mixed binomials}, hence we will not provide the same amount of detail as we did in its proof. Instead, we will mainly focus on the differences. For two stable $t, s \in \mathcal{S}$ we define
    \[
    \kappa_n^-(t, s) := ( \kappa(t, s) - C n^{-1/2 + \alpha}/\sqrt{q_t q_s} ) \wedge 0.
    \]
    By recalling Definition~\ref{def:arc to vertex type} we then we have the stochastic bound
    \[
    A_n(t, s) \preceq A^-_n(t, s) \sim \texttt{Bin}(N_t N_s, \kappa_n^-(t, s) / n),
    \]
    allowing us to conclude that
    \[
    \prob(A_n(t, s) > \Lambda_n(t, s)) \leq \prob(A^-_n(t, s) > \Lambda_n(t, s)).
    \]
    We will now apply same three steps as in the proof of Lemma~\ref{lem:undershoot mixed binomials}.

    \paragraph{Step I.} Intersecting with $\mathcal{V}_{ts}$, applying Lemma~\ref{lem:well-concentrated types} and applying the law of total probability yields
    \begin{equation}\label{eq:overshoot}
    \prob(A^-_n(t, s) > \Lambda_n(t, s)) \leq \expec[\prob(\{A_n^-(t, s) > \Lambda_n(t, s)\}\cap \mathcal{V}_{ts} \;|\; N_t, N_s)] + 2 \exp(-\log(n)^2 / 2).
    \end{equation}
    We will now focus on $\prob(\{A_n^-(t, s) > \Lambda_n(t, s)\}\cap \mathcal{V}_{ts} \;|\; N_t, N_s)$, and note due to the intersection that we can assume $\mathcal{V}_{t s}$ to be satisfied. Hence, similar to \eqref{eq:binom deviation mixed binom proof} we can write this probability as
    \[
    \prob(A_n^-(t, s) - \expec[A_n^-(t, s) \;|\; N_t, N_s] > \theta(t, s) \;|\; N_t, N_s),
    \]
    where $\theta(t, s) = \Lambda_n(t, s) - \expec[A_n^-(t, s) \;|\; N_t, N_s]$. To apply the Chernoff bound (i.e., Theorem 2.21 in \cite{Hofstad2016RandomNetworks}), we need to show this parameter is positive.

    \paragraph{Step II.} We first bound $\theta(t, s)$ as
    \[
    \theta(t, s) = \lfloor n q_t q_s \kappa(t, s) \rfloor - \frac{\kappa^-(t, s) N_t N_s}{n} \geq n q_t q_s \kappa(t, s) - \frac{\kappa(t, s) N_t, N_s}{n} + \frac{C N_t N_s}{n^{3/2 - \alpha} \sqrt{q_t q_s}} - 1.
    \]
    Now, since $\kappa_n^-(t, s) > 0$ and since we can assume $\mathcal{V}_{ts}$ to be satisfied, we note that we can create a further lower-bound by using $N_t \leq q_t n + \log(n) \sqrt{q_t n}$ and $N_s \leq q_s n + \log(n) \sqrt{q_s n}$. Substituting these bounds, and simplifying yields
    \[
    \theta(t, s) \geq \sqrt{q_t q_s} \left( - \kappa(t, s) \log(n) n^{1/2} (\sqrt{q_t} + \sqrt{q_s}) - \log(n)^2 + C n^{1/2 + \alpha} - \frac{1}{\sqrt{q_tq_s}} \right).
    \]
    Here, we have removed additional positive terms. We now want the sole positive term to dominate. We have already covered domination over the first negative term in the proof of Lemma~\ref{lem:undershoot mixed binomials}. The argument is given after \eqref{eq:sqrt bound}. We also trivially see that the positive term dominates the second negative term (the logarithm). To see domination over the third negative term, we use the fact that $t$ and $s$ are stable at tolerance $\tau$ to conclude $1/\sqrt{q_t q_s} \leq n^{1 - \tau}$. By recalling from Assumption~\ref{ass:kernel bound} that $\alpha > 1/2 - \tau/2$ we find that $n^{1/2 + \alpha} > n^{1 - \tau /2}$, which dominates $n^{1 - \tau}$. Hence, indeed we see there exists a constant $\widehat{C} > 0$ such that
    \[
    \theta(t, s) \geq \widehat{C} n^{1/2 + \alpha} \sqrt{q_t q_s}.
    \]
    This is the positivity we required.
    \paragraph{Step III.} Applying the Chernoff bound shows 
    \[
    \prob(A_n^-(t, s) > \Lambda_n(t, s) \;|\; N_t, N_s) \leq \exp\left( - \frac{\widehat{C}^2 n^{1 + 2 \alpha} q_i q_j}{2 \expec[A_n^-(t, s) \;|\; N_t, N_s] } \right).
    \]
    We will now upper-bound the expectation inside the exponential function. Recall the definition of $A^+_n(t, s)$ in the proof of Lemma~\ref{lem:undershoot mixed binomials} and note that
    \[
    \expec[A_n^-(t, s) \;|\; N_t, N_s] = \frac{\kappa^-_n(t, s) N_t N_s}{n} \leq \frac{\kappa^+_n(t, s) N_t N_s}{n} = \expec[A_n^+(t, s) \;|\; N_t, N_s].
    \]
    Substituting this into the Chernoff bound we just obtained unveils the bound in \eqref{eq:cher applied undershoot}. From here, repeating the arguments of Step III in the proof of Lemma~\ref{lem:undershoot mixed binomials} shows there is a $\nu > 0$ such that
    \[
    \prob(A_n^-(t, s) > \Lambda_n(t, s)  \;|\; N_t, N_s) \leq \exp\left( - n^\nu \right).
    \]
    Substituting this back into \eqref{eq:overshoot} yields the desired result.
\end{proof}

\begin{proof}[Proof of Lemma~\ref{lem:monotonicity ARD}]
     We will only show the increasing case, since the proof for the decreasing case is similar. Suppose $\mathcal{Q}_n$ is increasing. We couple $\ARD_n(T, \Lambda_n)$ and $\ARD_n(T, \Lambda_n')$ using the following procedure:
    \begin{enumerate}
        \item Generate the types of each vertex.
        \item First, for each pair of types $t, s \in \mathcal{S}$ choose $\Lambda_n(t, s)$ vertex pairs where the first vertex has type $t$ and the second $s$ uniformly at random from all possible pairs without replacement. This is the realisation of $\ARD_n(T, \Lambda_n)$.
        \item Then, for each pair of types $t, s \in \mathcal{S}$ choose $\Lambda'_n(t,s) - \Lambda_n(t, s) \geq 0$ of the remaining vertex pairs where the first has type $t$ and the second $s$ uniformly at random without replacement. This provides the realisation of $\ARD_n(T, \Lambda_n')$.
    \end{enumerate}
    Since $\Lambda'_n(t,s) - \Lambda_n(t, s) \geq 0$ we can see that step 2 and 3 in the above procedure are equivalent to choosing $\Lambda'_n(t,s)$ pairs of vertices where the first has type $t$ and the second $s$ uniformly at random without replacement. This is what is needed in Step 2 and 3 of $\texttt{ARD}_n(T, \Lambda_n')$ (cf. Section~\ref{sec:ARD}). Under this coupling we also have that $\ARD_n(T, \Lambda_n) \subset \ARD_n(T, \Lambda_n')$. Thus, by virtue of $\mathcal{Q}_n$ being an increasing event, we have that $\ARD_n(T, \Lambda_n) \in \mathcal{Q}_n$ implies $\ARD_n(T, \Lambda_n') \in \mathcal{Q}_n$, letting us conclude that $\prob(\ARD_n(T, \Lambda_n) \in \mathcal{Q}_n) \leq \prob(\ARD_n(T, \Lambda_n') \in \mathcal{Q}_n)$. 
\end{proof}

\subsection{Proofs of lemmas for Theorem~\ref{thm:CCI to IRD}}

\begin{proof}[Proof of Lemma~\ref{lem:kappa bounded}]
    Set $\lambda^\downarrow := \inf_i\{\lambda_i : \lambda_i > 0\}$ and similarly $\varrho^\downarrow := \inf_j \{ \varrho_j : \varrho_j > 0\}$. From Assumption~\ref{ass:CCI} we have that $\lambda^\downarrow, \varrho^\downarrow > 0$. Substituting these into \eqref{eq:asymp connection number} gives us the upper bound
    \[
    \kappa(t, s) \leq \frac{\mu}{\lambda^\downarrow \varrho^\downarrow} \sum_{i = 1}^\infty \sum_{j = 1}^\infty p_{ij} I(t, i) J(s, j).
    \]
    Finally, by noting that $I$ and $J$ are indicators, and that $(p_{ij})_{ij}$ is a probability mass function, we find an upper-bound that is uniform in $t$ and $s$, proving the claim.
    \[
    \kappa(t, s) \leq \frac{\mu}{\lambda^\downarrow \varrho^\downarrow} \sum_{i = 1}^\infty \sum_{j = 1}^\infty p_{ij} =\frac{\mu}{\lambda^\downarrow \varrho^\downarrow}.
    \]
\end{proof}

\begin{proof}[Proof of Lemma~\ref{prob:unstable}]
    Fix an arbitrary number $r \in (0, \delta)$. Since $\expec[T^\delta] < \infty $ we have that $q_t < t^{-1 - \delta}$ (for all $t$ sufficiently large), implying that
    \begin{equation}\label{eq:q power upper}
            q_t^{(1 + r)/(1 + \delta)} < t^{-1 - r}.
    \end{equation}
    We will now split up the desired probability as
    \[
    \prob(T > u_n^\uparrow(\tau)) = \sum_{t = u_n^\uparrow}^\infty q_t = \sum_{t = u_n^\uparrow}^\infty q_t^{(\delta - r)/(1 + \delta)} q_t^{(1 + r)/(1 + \delta)}.
    \]
    From Definition~\ref{def:stable vertices} we have that $q_t < n^{-1 + \tau}$. Using this fact yields
    \[
    \prob(T > u_n^\uparrow(\tau))= \sum_{t = u_n^\uparrow}^\infty q_t^{(\delta - r)/(1 + \delta)} q_t^{(1 + r)/(1 + \delta)} \leq n^{(\tau - 1)(\delta - r)/(1 + \delta)} \sum_{t = u_n^\uparrow}^\infty q_t^{(1 + r)/(1 + \delta)}.
    \]
    Using \eqref{eq:q power upper} in the remaining sum yields the desired result (by noticing that the leftover sum converges).
    \[
    \prob(T > u_n^\uparrow(\tau)) \leq n^{(\tau - 1)(\delta - r)/(1 + \delta)} \sum_{t = 1}^\infty q_t^{(1 + r)/(1 + \delta)} \leq n^{(\tau - 1)(\delta - r)/(1 + \delta)} \sum_{t = 1}^\infty \frac1{t^{1 + r}} = \widehat{C}_r \cdot n^{\frac{(\tau - 1)(\delta - r)}{1 + \delta}}.
    \]
\end{proof}

\begin{proof}[Proof of Lemma~\ref{lem:self-loops}]
    Fix a constant $\nu < \tau$. First, apply the union bound to find that
    \begin{equation}\label{eq:goal self-loops}
    \prob\left( \bigcup_{t = 1}^{u_n^\uparrow(\tau)} \left\{ S_t > n^{1 - \nu} \right\} \right) \leq \sum_{t = 1}^{u_n^\uparrow} \prob(S_t > n^{1 - \nu}).
    \end{equation}
    Now, conditioning on $\mathcal{V}_{tt}$ (cf. Definition~\ref{def:well-concentrated types}) and applying Lemma~\ref{lem:well-concentrated types} shows that
    \begin{equation}\label{eq:intersect Vtt}
    \prob(S_t > n^{1 - \nu}) \leq \prob(\{S_t > n^{1 - \nu} \} \cap \mathcal{V}_{tt}) + 2 \exp\left( - \log(n)^2/2 \right) .
    \end{equation}
    Now, we apply the law of total probability on the remaining probability condition on the number of vertices with type $t$. We find $\prob(S_t > n^{1 - \nu}) = \expec[\prob(\{S_t > n^{1 - \nu} \} \cap \mathcal{V}_{tt} \;|\; N_t)]$. Using this, we note that there will be $N_t^2$ pairs of vertices with type $t$ between which an arc can be placed and $N_t$ of these will create a self-loop. Thus, for each arc the probability that it creates a self-loop within type $t$ is upper bounded by $N_t / N_t^2 = 1/ N_t$. Thus, we have that $S_t \preceq \texttt{Bin}(\lfloor \mu n \rfloor, 1 / N_t)$.
    
    Furthermore, note that $\mathcal{V}_{tt}$ either happens or not based on the value of $N_t$. Specifically, it stipulates that $N_t \geq q_t n - \log(n) \sqrt{q_t n}$. This means we can further stochastically bound
    \[
    S_t \preceq \texttt{Bin}\left(\lfloor \mu n \rfloor, \frac{1}{q_t n - \log(n) \sqrt{q_t n}}  \right) =: B_t.
    \]
    Altogether, these arguments show that
    \[
    \expec[\prob(\{S_t > n^{1 - \nu} \} \cap \mathcal{V}_{tt} \;|\; N_t)] \leq \prob(B_t > n^{1 - \nu}) \leq \prob(|B_t - \expec[B_t] | > n^{1 - \nu} - \expec[B_t]).
    \]
    We now seek to apply the Chernoff bound on this binomial probability. For this, we show that $n^{1 - \nu} - \expec[B_t] > 0$. A straightforward calculation first yields
    \[
    n^{1 - \nu} - \expec[B_t] = n^{1 - \nu} - \frac{\lfloor \mu n \rfloor}{q_t - \log(n) \sqrt{q_t n}} \geq n^{1 - \nu} - \frac{ \mu n }{q_t - \log(n) \sqrt{q_t n}}.
    \]
    Now, note that $t \leq u_n^\uparrow$ which implies that $q_t \geq n^{-1 + \tau}$, resulting in the observation that for $n$ large $q_t n > 2 \log(n) \sqrt{q_t n} $. Thus, this shows us that
    \[
    n^{1 - \nu} - \expec[B_t] \geq n^{1 - \nu} - \frac{2 \mu}{q_t} \geq n^{1 - \nu} - 2 \mu n^{1 - \tau}.
    \]
    Now, since we have chosen $\nu < \tau$ we indeed find that $n^{1 - \nu} - \expec[B_t] > 0$. Moreover, if we apply the Chernoff bound, then the choice $\nu < \tau$ even ensures there exists a constant $\widehat{C} > 0$ such that
    \[
    \prob(|B_t - \expec[B_t] | > n^{1 - \nu} - \expec[B_t]) \leq \exp\left( - \widehat{C} n^{1 - \nu} \right).
    \]
    Substituting this together with \eqref{eq:intersect Vtt} into \eqref{eq:goal self-loops} yields
    \[
    \prob(S_t > n^{1 - \nu}) \leq u_n^\uparrow \exp\left( - \widehat{C} n^{1 - \nu} \right) + 2 u_n^\uparrow \exp\left( - \log(n)^2/2 \right) .
    \]
    By noting from Lemma~\ref{lem:no stable} that $u_n^\uparrow$ is polynomial, we see that both terms in this sum decay super-polynomially to zero, verifying the statement of the lemma. 
\end{proof}

\begin{proof}[Proof of Lemma~\ref{lem:multi-arcs}]
    The proof is very similar to the proof of Lemma~\ref{lem:self-loops}, so we will mainly highlight differences. Fix a number $\nu < 2 \tau - 1$. We first apply the union bound and intersect with $\mathcal{V}_{ts}$ (cf. Definition~\ref{def:stable vertices}) to find through Lemma~\ref{lem:well-concentrated types} for any large number $p$ that
    \begin{equation}\label{eq:multi-arc goal}
    \prob\left( \bigcup_{t = 1}^{u_n^\uparrow} \bigcup_{s = 1}^{u_n^\uparrow} \{M_{ts} > n^{1 - \nu}\} \right) \leq \sum_{t = 1}^{u_n^\uparrow} \sum_{s = 1}^{u_n^\uparrow} \prob(\{M_{ts} > n^{1 - \nu}\} \cap \mathcal{V}_{ts}) + 2 (u_n^\uparrow)^2 \exp\left( - \log(n)^2/2 \right).
    \end{equation}
    We now use the law of total probability to condition on the outcomes of both $N_t$ and $N_s$. If we know these values, then in our model the worst-case scenario for preventing multi-arcs would be the situation that $\mu n - 1$ arcs have already been placed between unique vertex-pairs with type $t$ and $s$, respectively. So, the probability for a new arc to form a multi-arc from a vertex with type $t$ to a vertex with type $s$ would be bounded by $\frac{\mu n}{N_t N_s}$. Once again $\mathcal{V}_{ts}$ stipulates that $N_t \geq q_t n - \log(n) \sqrt{q_t n}$ and $N_s \geq q_s n - \log(n) \sqrt{q_s n}$. Thus, together with the previously derived multi-arc probability upper-bound, we may conclude that
    \[
    M_{ts} \preceq \texttt{Bin}\left(\lfloor \mu n \rfloor, \frac{\mu n}{N_t N_s} \right) \preceq \texttt{Bin}\left(\lfloor \mu n \rfloor, \frac{\mu n}{(q_t n - \log(n) \sqrt{q_t n})(q_s n - \log(n) \sqrt{q_s n})} \right) =: B_{ts}.
    \]
    Hence, we find that
    \begin{align*}
           \expec[ \prob(\{M_{ts} > n^{1 - \nu}\} \cap \mathcal{V}_{ts}) \;|\; N_t, N_s] &\leq \prob(|B_{ts} - \expec[B_{ts}] | > n^{1 - \nu} - \expec[B_{ts}]).
    \end{align*}
    We now show that $B_{ts} - \expec[B_{ts}] > 0$. Recall from the proof of Lemma~\ref{lem:self-loops} that $q_t n \geq \sqrt{q_t n}$ and $q_s n \geq \sqrt{q_s n}$ when $t, s \leq u_n^\uparrow$. Moreover, we also have that $q_t, q_s \geq n^{-1 + \tau}$. Hence, we may conclude for $n$ large that
    \[
    n^{1 - \nu} - \expec[B_{ts}] \geq n^{1 - \nu} - \frac{\mu^2 n^2}{(q_t n - \log(n) \sqrt{q_t n})(q_s n - \log(n) \sqrt{q_s n})} \geq n^{1 - \nu} - \frac{2 \mu}{q_t q_s} \geq n^{1 - \nu} - 2 \mu n^{1 - ( 2\tau - 1)}.
    \]
    Now, since we have assumed that $\nu < 2 \tau - 1$ we indeed find that $n^{1 - \nu} - \expec[B_{ts}] > 0$. When we apply the Chernoff bound, the inequality $\nu < 2 \tau - 1$ specifically shows that there exists a $\widehat{C} > 0$ such that
    \[
    \prob(|B_{ts} - \expec[B_{ts}] | > n^{1 - \nu} - \expec[B_{ts}]) \leq \exp\left( - \widehat{C} n^{1 - \nu} \right).
    \]
    Substituting this back into \eqref{eq:multi-arc goal} shows
    \[
     \prob\left( \bigcup_{t = 1}^{u_n^\uparrow} \bigcup_{s = 1}^{u_n^\uparrow} \{M_{ts} > n^{1 - \nu}\} \right) \leq (u_n^\uparrow)^2 \exp\left( - \widehat{C} n^{1 - \nu} \right) + 2 (u_n^\uparrow)^2 \exp\left( - \log(n)^2/2 \right).
    \]
    Like at the end of the proof of Lemma~\ref{lem:self-loops}, the statement is true due to the fact that $u_n^\uparrow$ is polynomial in size (cf. Lemma~\ref{lem:no stable}).
\end{proof}

\begin{proof}[Proof of Lemma~\ref{lem:arc vertex prob stable}]
    Let $C_a$ denote the colour assigned to arc $a \in [\lfloor \mu n \rfloor]$, and denote by $(N_k)_{k \in \mathcal{S}}$ the sequence that records for each vertex-type the amount of vertices of said type. Finally, set a dummy upper-bound (cf. Definition~\ref{def:stable vertices}) \[M_n := u_n^\uparrow\left( 1 - \frac{2 + \varepsilon}{2(1 + \varepsilon)} \right).\] This dummy value serves to ensure that $N_k$ concentrates for as many vertex-types simultaneously as possible. Note from step 3--5 in the generation algorithm for $\texttt{CCI}_{n, \mu}$ that
    \begin{equation}\label{eq:goal arc vertex prob stable}
           \prob\left(\mathcal{A}_{ts}^{(a)} \; \Bigg{|}\; \bigcap_{k = 1}^{M_n} \bigcap_{k' = 1}^{M_n} \mathcal{V}_{kk'}, \mathcal{V}_{ts}, C_a = (i, j), (N_k)_{k\in \mathcal{S}}\right) = \frac{N_t N_s I(t, i) J(s, j)}{\left( \sum_{k \in \mathcal{S}} N_k I(k, i) \right) \left(\sum_{k' \in \mathcal{S}} N_{k'} J(k', j) \right) }, 
    \end{equation}
    Where $\mathcal{V}_{ts}$ is the event from Definition~\ref{def:well-concentrated types}.
    Now, since the intersection of the $\mathcal{V}_{kk'}$ events and $\mathcal{V}_{ts}$ occurs, we have for all types $k \leq M_n$ and $k \in \{t, s\}$ that
    \begin{equation}\label{eq:bounds well conentrated vertex types}
        n q_k - \log(n) \sqrt{n q_k} \leq N_k \leq n q_k + \log(n) \sqrt{n q_k}. 
    \end{equation}
    In essence, the rest of the proof consists of using \eqref{eq:bounds well conentrated vertex types} to find bounds on \eqref{eq:goal arc vertex prob stable} independent of $(N_k)_{k \in \mathcal{S}}$, and then use the law of total probability on these bounds to achieve the desired result. More precisely, we shall derive the following upper and lower bound:
    \begin{equation}\label{eq:goal Step I proof arc prob stable}
         \frac{N_t N_s I(t, i) J(s, j)}{\left( \sum_{k \in \mathcal{S}} N_k I(k, i) \right) 
         	\left(\sum_{k' \in \mathcal{S}} N_{k'} J(k', j) \right) }
         \le \frac{q_t q_s I(i, t) J(j, s)}{\lambda_i \varrho_j} + \tilde{C}^\uparrow \log(n) n^{- \frac{\tau}2},
    \end{equation}
    and
    \begin{equation}\label{eq:goal step II arc prob stable}
         \frac{N_t N_s I(t, i) J(s, j)}{\left( \sum_{k \in \mathcal{S}} N_k I(k, i) \right) 
         \left(\sum_{k' \in \mathcal{S}} N_{k'} J(k', j) \right) }
         \ge \frac{q_t q_s I(i, t) J(j, s)}{\lambda_i \varrho_j} - \tilde{C}^\downarrow  \log(n) n^{- \frac{\tau}2},
    \end{equation}
    for some constants $\tilde{C}^\uparrow$ and $\tilde{C}^\downarrow$. We first finish the proof using these bounds. We use the law of total probability to write
        \begin{equation}\label{eq:goal step III arc prob stable}
                \prob(\mathcal{A}^{(a)}_{ts}) = \sum_{i = 1}^\infty \sum_{j = 1}^\infty p_{ij} \prob(\mathcal{A}^{(a)}_{ts} \;|\; C_a = (i, j)).
        \end{equation}
        Now, we condition on the intersection of $\mathcal{V}_{kk'}$ events and $\mathcal{V}_{ts}$ to find that
        \begin{align*}
        \prob(\mathcal{A}^{(a)}_{ts} \;|\; C_a = (i, j)) &= \prob\left(\mathcal{A}^{(a)}_{ts} \;\Bigg{|}\; C_a = (i, j),  \bigcap_{k = 1}^{M_n} \bigcap_{k' = 1}^{M_n} \mathcal{V}_{kk'}, \mathcal{V}_{ts} \right) \prob\left( \bigcap_{k = 1}^{M_n} \bigcap_{k' = 1}^{M_n} \mathcal{V}_{kk'}, \mathcal{V}_{ts} \;\Bigg{|}\; C_a = (i, j)\right),\\
        &+ \prob\left(\mathcal{A}^{(a)}_{ts} \;\Bigg{|}\; C_a = (i, j),   \bigcup_{k = 1}^{M_n} \bigcup_{k' = 1}^{M_n} \neg \mathcal{V}_{kk'} \cup \neg \mathcal{V}_{ts} \right) \prob\left(  \bigcup_{k = 1}^{M_n} \bigcup_{k' = 1}^{M_n} \neg \mathcal{V}_{kk'} \cup \neg \mathcal{V}_{ts} \;\Bigg{|}\; C_a = (i, j)\right).
        \end{align*}
        By noting from Step 1 and 2 from the $\CCI_{n, \mu}$ generation algorithm that all $\mathcal{V}_{kk'}$ and $C_a$ are independent this reduces into
        \begin{equation}\label{eq:step III arc prob stable conditioning}
        \begin{aligned}
        \prob(\mathcal{A}^{(a)}_{ts} \;|\; C_a = (i, j)) &= \prob\left(\mathcal{A}^{(a)}_{ts} \;\Bigg{|}\; C_a = (i, j),  \bigcap_{k = 1}^{M_n} \bigcap_{k' = 1}^{M_n} \mathcal{V}_{kk'}, \mathcal{V}_{ts}\right) \prob\left( \bigcap_{k = 1}^{M_n} \bigcap_{k' = 1}^{M_n} \mathcal{V}_{kk'}, \mathcal{V}_{ts} \right),\\
        &+ \prob\left(\mathcal{A}^{(a)}_{ts} \;\Bigg{|}\; C_a = (i, j),   \bigcup_{k = 1}^{M_n} \bigcup_{k' = 1}^{M_n} \neg \mathcal{V}_{kk'} \cup \neg \mathcal{V}_{ts} \right) \prob\left(  \bigcup_{k = 1}^{M_n} \bigcup_{k' = 1}^{M_n} \neg \mathcal{V}_{kk'} \cup \neg \mathcal{V}_{ts} \right).
        \end{aligned}
        \end{equation}
        Now we use $(N_k)_{k \in \mathcal{S}}$-independent bounds \eqref{eq:goal Step I proof arc prob stable} and \eqref{eq:goal step II arc prob stable} to create upper- and lower-bounds of \eqref{eq:step III arc prob stable conditioning}. Specifically, it shows that 
        \begin{align*}
             \prob(\mathcal{A}^{(a)}_{ts} \;|\; C_a = (i, j)) &\leq \frac{q_t q_s I(i, t) J(j, s)}{\lambda_i \varrho_j} + \tilde{C}^\uparrow  \log(n) n^{- \frac{\tau}2}   +  \prob\left(  \bigcup_{k = 1}^{M_n} \bigcup_{k' = 1}^{M_n} \neg \mathcal{V}_{kk'}\cup \neg \mathcal{V}_{ts}\right), \text{ and} \\
             \prob(\mathcal{A}^{(a)}_{ts} \;|\; C_a = (i, j)) &\geq \frac{q_t q_s I(i, t) J(j, s)}{\lambda_i \varrho_j} - \tilde{C}^\downarrow  \log(n) n^{- \frac{\tau}2} -  \prob\left(  \bigcup_{k = 1}^{M_n} \bigcup_{k' = 1}^{M_n} \neg \mathcal{V}_{kk'} \cup \neg \mathcal{V}_{ts}\right).
        \end{align*}
        We use the union bound on the $\mathcal{V}_{kk'}$-terms and use Lemma~\ref{lem:well-concentrated types} to create further upper- and lower-bounds given by
        \begin{align*}
             \prob(\mathcal{A}^{(a)}_{ts} \;|\; C_a = (i, j)) &\leq \frac{q_t q_s I(i, t) J(j, s)}{\lambda_i \varrho_j} + \tilde{C}^\uparrow  \log(n) n^{- \frac{\tau}2}  +  2 (M_n^2 + 1) \exp\left( - \log(n)^2/2 \right), \text{ and}\\
             \prob(\mathcal{A}^{(a)}_{ts} \;|\; C_a = (i, j)) &\geq \frac{q_t q_s I(i, t) J(j, s)}{\lambda_i \varrho_j} - \tilde{C}^\downarrow  \log(n) n^{- \frac{\tau}2} - 2 (M_n^2 + 1) \exp\left( - \log(n)^2/2 \right).
        \end{align*}
        Using Lemma~\ref{lem:no stable} to conclude that $M_n \leq n$, we find that the final terms are not dominant. Substituting these bounds back into \eqref{eq:goal step III arc prob stable} yields
        \[
        	\frac{q_t q_s \kappa(t, s)}{\mu} - \widehat{C}  \log(n) n^{- \frac{\tau}2}
        	\le \prob(\mathcal{A}^{(a)}_{ts}) \le 
        	\frac{q_t q_s \kappa(t, s)}{\mu} + \widehat{C}   \log(n) n^{- \frac{\tau}2},
        \]
        for some $\widehat{C} > 0$. Together, these two bounds indeed show that
        \[
        \left| \prob(\mathcal{A}^{(a)}_{ts}) - \frac{q_t q_s \kappa(t, s)}{\mu}  \right| \leq \widehat{C}  \log(n) n^{- \frac{\tau}2},
        \]
    	which proves the main result. What is left now is to establish the upper and lower bound~\eqref{eq:goal Step I proof arc prob stable} and~\eqref{eq:goal step II arc prob stable}.

    \paragraph{The upper bound \eqref{eq:goal Step I proof arc prob stable}.} Define the indicator
    \[
    \hat{I}_k := \1\{q_k n - \log(n) \sqrt{n q_k } \geq 0\}.
    \]
    Substituting the upper-bound of \eqref{eq:bounds well conentrated vertex types} in the numerator of \eqref{eq:goal arc vertex prob stable}, and substituting the lower bound of \eqref{eq:bounds well conentrated vertex types} in the denominator of \eqref{eq:goal arc vertex prob stable} yields the upper-bound of \eqref{eq:goal arc vertex prob stable} given by
    \[
    \frac{(q_t n + \log(n) \sqrt{q_t n})(q_s n + \log(n) \sqrt{q_s n}) I(i, t) J(j, s)}{\left( \sum_{k \in \mathcal{S}} [ q_k n - \log(n) \sqrt{q_k n} ] I(i, k) \hat{I}_k \right) \left( \sum_{k' \in \mathcal{S}}[q_{k'} n - \log(n) \sqrt{q_{k'} n}]J(j, k') \hat{I}_{k'} \right)}.
    \]
    Here, the inclusion of the indicators $\hat{I}_k$ is possible in the denominator, since we know $N_k \geq 0$ for all $k \in \mathcal{S}$. Now, we will extend this upper bound by only considering the first $M_n$ terms in the sum. Note for these terms that $\hat{I}_k = 1$ (cf. Definition~\ref{def:stable vertices}). Thus, we find the further upper-bound
    \[
    \frac{(q_t n + \log(n) \sqrt{q_t n})(q_s n + \log(n) \sqrt{q_s n}) I(i, t) J(j, s)}{\left( \sum_{k = 1}^{M_n} [ q_k n - \log(n) \sqrt{q_k n} ] I(i, k)  \right) \left( \sum_{k' = 1}^{M_n}[q_{k'} n - \log(n) \sqrt{q_{k'} n}]J(j, k')  \right)}.
    \]
    Next, we expand the products in the upper-bound and remove the additional positive term from the denominator to derive a further upper-bound. It is given by
    \begin{equation*}
           \frac{(q_t q_s n^2 + (\sqrt{q_t} + \sqrt{q_s}) n \log(n) \sqrt{q_t q_s n} + n \log(n)^2 \sqrt{q_t q_s}) I(i, t) J(j, s)}{\sum_{k = 1}^{M_n}\sum_{k' = 1}^{M_n} q_{k}  q_{k'} n^2 I(i, k) J(j, k') - \sum_{k = 1}^{M_n}\sum_{k' = 1}^{M_n} n \log(n) \sqrt{q_k q_{k'} n} (\sqrt{q_k} + \sqrt{q_{k'}})}. 
    \end{equation*}
    Now, we further bound the negative term in the denominator to attain an error sum that only depends on $\sqrt{q_k q_{k'}}$. We find
    \begin{equation}\label{eq:upper bound goal arc prob stable with sqrt sums}
           \frac{(q_t q_s n^2 + (\sqrt{q_t} + \sqrt{q_s}) n \log(n) \sqrt{q_t q_s n} + n \log(n)^2 \sqrt{q_t q_s}) I(i, t) J(j, s)}{\sum_{k = 1}^{M_n}\sum_{k' = 1}^{M_n} q_{k}  q_{k'} n^2 I(i, k) J(j, k') - n \log(n) \sqrt{n} \sum_{k = 1}^{M_n}\sqrt{q_k} \sum_{k' = 1}^{M_n} \sqrt{q_{k'}}}. 
    \end{equation}
    Since $\expec[T^{1 + \varepsilon}] < \infty$ for some $\varepsilon > 0$ (cf. Assumption~\ref{ass:CCI}), it holds that $q_k \leq 1/k^{2 + \varepsilon}$ for $k$ large. Thus, we have that $\sum_{k = 1}^\infty \sqrt{q_k} < \infty$. Using this in \eqref{eq:upper bound goal arc prob stable with sqrt sums} we find there exists a constant $\widehat{C}' > 0$ such that it is upper-bounded by
    \begin{equation}\label{eq:upper bound goal arc prob stable}
           \frac{(q_t q_s n^2 + (\sqrt{q_t} + \sqrt{q_s}) n \log(n) \sqrt{q_t q_s n} + n \log(n)^2 \sqrt{q_t q_s}) I(i, t) J(j, s) }{\sum_{k = 1}^{M_n}\sum_{k' = 1}^{M_n} q_{k}  q_{k'} n^2 I(i, k) J(j, k') - \widehat{C}' \cdot n \log(n) \sqrt{n} }. 
    \end{equation}
    
    Note that the double sum in \eqref{eq:upper bound goal arc prob stable} is close to $\lambda_{i}\varrho_j$ (cf. \eqref{eq:fraction of vertices connecting to lig} and \eqref{eq:fraction of vertices connecting to rec}). We will now make these parameters visible in the denominator by adding and subtracting the remainders of the sum. We find for any $r \in (0, 1 + \varepsilon)$ (cf. Assumption~\ref{ass:CCI}) and some corresponding $\widehat{C}_r > 0$ that 
    \begin{align*}
     \sum_{k = 1}^{M_n}\sum_{k' = 1}^{M_n} q_{k}  q_{k'} n^2 I(i, k) J(j, k') &\geq  n^2 \lambda_i \varrho_j  -  \sum_{k = 1}^{\infty}\sum_{k' = M_n + 1}^{\infty} q_{k}  q_{k'} n^2 -  \sum_{k = u_n^\uparrow + 1}^{\infty}\sum_{k' = 1}^{\infty} q_{k}  q_{k'} n^2,\\
     &\geq n^2 \lambda_i \varrho_j - 2 n^2 \prob(T > M_n),\\
     &\geq n^2 \left( \lambda_i \varrho_j - 2 \widehat{C}_r \cdot n^{-\frac12 + \frac{r}{2(1 + \varepsilon)}} \right).
    \end{align*}
    We used Lemma~\ref{prob:unstable} in the final line of this string of inequalities. Substituting this back into \eqref{eq:upper bound goal arc prob stable} yields
    \[
    \frac{(q_t q_s n^2 + (\sqrt{q_t} + \sqrt{q_s}) n \log(n) \sqrt{q_t q_s n} + n \log(n)^2 \sqrt{q_t q_s}) I(i, t) J(j, s)}{n^2 \left( \lambda_i \varrho_j -  2 \widehat{C}_r \cdot n^{-\frac12 + \frac{r}{2(1 + \varepsilon)}} - \widehat{C}' \cdot \log(n)  n^{-\frac12}   \right)}.
    \]
    Next, we divide everything through by $n^2$ and extract the factor $q_t q_s I(i, t)J(j, s) / (\lambda_i \varrho_j)$ to find the following upper-bound
    \begin{equation}\label{eq:upper bound proof arc prob stable n dep den}
        \frac{q_t q_s I(i, t) J(j, s)}{\lambda_i \varrho_j}\cdot \frac{1 + (\sqrt{q_t} + \sqrt{q_s}) n^{-1/2} \log(n) /\sqrt{q_t q_s} + n^{-1} \log(n)^2 /\sqrt{q_t q_s}}{1 - \widehat{C}_1 \cdot n^{-\frac12 + \frac{r}{2(1 + \varepsilon)}} - \widehat{C}_2 \cdot \log(n) n^{- \frac12}}.
    \end{equation}
    Here, $\widehat{C}_1 = 2\widehat{C}_r/(\lambda^\downarrow \varrho^\downarrow)$ and $\widehat{C}_2 = \widehat{C}' /(\lambda^\downarrow \varrho^\downarrow)$ with $\lambda^\downarrow := \inf_i\{\lambda_i : \lambda_i > 0\}$ and $\varrho^\downarrow := \inf_j\{\varrho_j : \varrho_j > 0\}$. We recall that $\lambda^\downarrow,\varrho^\downarrow > 0$ due to Assumption~\ref{ass:CCI}. By noting that all the $n$-dependent terms in the denominator (and numerator, since $\tau > 0$) of \eqref{eq:upper bound proof arc prob stable n dep den} converge to zero, we can use the Taylor expansion of this error-fraction to conclude there exists a constant $\tilde{C} > 0$ such that it is bounded by
    \[
    1 + \tilde{C} \left( \frac{\log(n)}{\sqrt{n}} \left( \frac1{\sqrt{q_t}} + \frac1{\sqrt{q_s}} \right)  + \frac{\log(n)^2}{n \sqrt{q_t q_s}} + n^{-\frac12 + \frac{r}{2(1 + \varepsilon)}} + \log(n) n^{- \frac12}  \right).
    \]
    Now, recall that $t, s \leq u_n^\uparrow(\tau)$ and hence that $q_t, q_s \geq n^{-1 + \tau}$. Thus to remove $t$ and $s$ dependence we further bound this expression by
    \[
    1 + \tilde{C} \left(2 \log(n) n^{- \frac{\tau}2}  + \log(n)^2 n^{- \tau} + n^{-\frac12 + \frac{r}{2(1 + \varepsilon)}} + \log(n) n^{- \frac12}  \right).
    \]
    From this expression we note that only the first term can be dominant if we choose $r$ sufficiently close to zero. Thus, we find that there exists a constant $\tilde{C}^\uparrow > 0$ such that \eqref{eq:upper bound goal arc prob stable} is bounded from above by
    \[
        \frac{q_t q_s I(i, t) J(j, s)}{\lambda_i \varrho_j} + \tilde{C}^\uparrow \log(n) n^{- \frac{\tau}2},
    \]
    which proves~\eqref{eq:goal Step I proof arc prob stable}.

    \paragraph{The lower bound \eqref{eq:goal step II arc prob stable}.} The proof is similar to the upper bound. The biggest difference is the way we bound the denominator in \eqref{eq:goal arc vertex prob stable} using \eqref{eq:bounds well conentrated vertex types}. For this we first note $\sum_{k = 1}^\infty N_k = n.$ Thus, we can write
    \[
    \sum_{k = M_n + 1}^\infty N_k I(i, k) \leq \sum_{k = M_n + 1}^\infty N_k  = n - \sum_{k = 1}^{M_n} N_k = \sum_{k = 1}^\infty q_k n - \sum_{k = 1}^{M_n} N_k .
    \]
    Now, using the lower bound in \eqref{eq:bounds well conentrated vertex types} we can conclude that
    \begin{align*}
    \sum_{k = M_n + 1}^\infty N_k I(i, k) &\leq  \sum_{k = 1}^\infty q_k n - \sum_{k = 1}^{M_n} \left[ q_k n - \log(n) \sqrt{q_k n} \right] =  \sum_{k = 1}^\infty q_k n + \log(n)\sqrt{n} \sum_{k = 1}^n \sqrt{q_k} - \sum_{k =1}^{M_n} q_k n,\\
    &= \sum_{k = M_n + 1}^\infty q_k n + \log(n)\sqrt{n} \sum_{k = 1}^n \sqrt{q_k}  = \prob(T > M_n) n + \widehat{C}' \cdot \log(n) \sqrt{n}.
    \end{align*}
    Similarly, using the upper bound in \eqref{eq:bounds well conentrated vertex types} we have that
    \[
    \sum_{k = 1}^{M_n} N_k I(i, k) \leq \sum_{k = 1}^{M_n} q_k I(i, k) n + \widehat{C}' \cdot \log(n) \sqrt{n}.
    \]
    Now, using these bounds in the denominator of \eqref{eq:goal arc vertex prob stable} and using the lower-bound of \eqref{eq:bounds well conentrated vertex types} yields the following lower-bound
    \[
    \frac{(q_t n - \log(n) \sqrt{nq_t})(q_s n - \log(n) \sqrt{nq_s}) I(i, t) J(j, s)}{\left( \sum_{k = 1}^{M_n}  q_k n I(i, k) + 2 \widehat{C}'  \log(n) \sqrt{n} + n \prob(T > M_n)  \right) \left( \sum_{k' = 1}^{M_n}q_{k'} n J(i, k') + 2 \widehat{C}' \log(n) \sqrt{n} + n \prob(T > M_n )  \right)}.
    \]
    We expand the factors in this bound, remove the additional positive terms from the numerator, and only keep the dominant terms in the expansion of the denominator. This yields a lower-bound of \eqref{eq:goal arc vertex prob stable} similar to \eqref{eq:upper bound goal arc prob stable} given by
    \[
    \frac{(q_t q_s n^2 - (\sqrt{q_t} + \sqrt{q_s}) n \log(n) \sqrt{nq_tq_s}) I(i, t) J(j, s)}{\sum_{k = 1}^{M_n}\sum_{k' = 1}^{M_n} q_{k}  q_{k'} n^2 I(i, k) J(j, k') + \widetilde{C}' \left( n \log(n) \sqrt{n} + n^2 \prob(T > M_n))\right)},
    \]
    where $\widetilde{C}' > 0$ is some constant. Now, we create a further lower bound by running the sum in the denominator up to infinity (revealing $\lambda_i \varrho_j$), and applying Lemma~\ref{prob:unstable} for the remainder two terms. We find
    \[
     \frac{(q_t q_s n^2 - (q_t + q_s) n \log(n) \sqrt{n}) I(i, t) J(j, s)}{n^2 \left( \lambda_i \varrho_j +   \widehat{C}_r \cdot n^{-\frac12 + \frac{r}{2(1 + \varepsilon)}} + \widehat{C} n^{ - \frac12} \log(n) \right)}.
    \]
    Now, we extract $q_t q_s I(i, t) J(j, s) / (\lambda_i \varrho_j)$ from the fraction to find a lower-bound similar to \eqref{eq:upper bound proof arc prob stable n dep den} given by
    \[
    \frac{q_t q_s I(i, t) J(j, s)}{\lambda_i \varrho_j}\cdot \frac{1 - (\sqrt{q_t} + \sqrt{q_s}) n^{-1/2} \log(n)/\sqrt{q_t q_s}}{1 + \widehat{C}_1 \cdot n^{-\frac12 + \frac{r}{2(1 + \varepsilon)}} + \widehat{C}_2 \cdot \log(n) n^{- \frac12}},
    \]
    where $\widehat{C}_1, \widehat{C}_2 > 0$. Repeating the same arguments as in for the upper bound finally yields the desired lower-bound 
    \[
    	\frac{q_t q_s I(i, t) J(j, s)}{\lambda_i \varrho_j} - \tilde{C}^\downarrow  \log(n) n^{- \frac{\tau}2},
    \]
    for some $\tilde{C}^\downarrow > 0$ and any $r \in (0, 1 + \varepsilon)$.
\end{proof}

\begin{proof}[Proof of Lemma~\ref{lem:arc vertex prob no stable}]
    The approach is similar to the proof of Lemma~\ref{lem:arc vertex prob stable}. However, this time we only focus on an upper-bound. First, we set $M_n := u_n^\uparrow(\nu)$ for some $\nu$ close to zero, and write similar to \eqref{eq:goal arc vertex prob stable} that
    \begin{equation}\label{eq:goal arc vertex prob no stable}
   \prob\left(\mathcal{A}_{ts}^{(a)} \;\Bigg{|}\; \bigcap_{k = 1}^{M_n} \bigcap_{k' = 1}^{M_n} \mathcal{V}_{kk'}, \mathcal{V}_{ts}, C_a = (i, j), (N_k)_{k\in \mathcal{S}} \right) = \frac{N_t N_s I(t, i) J(s, j)}{\left( \sum_{k \in \mathcal{S}} N_k I(k, i) \right) \left(\sum_{k' \in \mathcal{S}} N_{k'} J(k', j) \right) }. 
    \end{equation}
    Now, since both the intersection and $\mathcal{V}_{ts}$ occurs, we have that
    \begin{align}
        n q_k - \log(n) \sqrt{q_k n} \leq N_k \leq n q_k + \log(n) \sqrt{q_k n} \enspace &\text{for } k \leq M_n \text{ and } k \in \{t, s\}. \label{eq:bounds conentrated vertex types good}
    \end{align}

    The rest of the proof consists of using \eqref{eq:bounds conentrated vertex types good} to find an upper-bound of \eqref{eq:goal arc vertex prob no stable} independent of $(N_k)_{k \in \mathcal{S}}$, and then use the law of total probability on these bounds to achieve the desired result. We will proceed in the following steps:
    \begin{enumerate}[label = \Roman*.]
        \item Derive a desirable upper-bound on \eqref{eq:goal arc vertex prob no stable}.
        \item Using the law of total probability on the bounds to find the desired result.
    \end{enumerate}

    \paragraph{Step I.} Repeat the argumentation for the upper bound in the proof of Lemma~\ref{lem:arc vertex prob stable} until \eqref{eq:upper bound proof arc prob stable n dep den}. We find the upper-bound
    \[
    \frac{q_t q_s I(i, t) J(j, s)}{\lambda_i \varrho_j}\cdot \frac{1 + (q_t^{-1/2} + q_s^{-1/2}) n^{-1/2} \log(n) + q_t^{-1/2} q_s^{-1/2} n^{-1} \log(n)^2}{1- \widehat{C}_1 \cdot n^{\frac{(\nu - 1)(1 + \varepsilon - r)}{2 + \varepsilon}} - \widehat{C}_2 \cdot \log(n) n^{- \frac12}}.
    \]
    Note we cannot repeat the Taylor expansion argument from the proof of Lemma~\ref{lem:arc vertex prob stable} here, since the two $n$-dependent terms in the numerator might diverge as $n \to \infty$, due to the instability of either $t$ or $s$. Thus, we slightly rewrite this expression to make the Taylor expansion argument viable again.
    \[
    \frac{\sqrt{q_t q_s} I(i, t) J(j, s)}{\lambda_i \varrho_j}\cdot \frac{\sqrt{q_t q_s} + (\sqrt{q_t} + \sqrt{q_s}) n^{-1/2} \log(n) + n^{-1} \log(n)^2}{1 - \widehat{C}_1 \cdot n^{\frac{(\nu - 1)(1 + \varepsilon - r)}{2 + \varepsilon}} - \widehat{C}_2 \cdot \log(n) n^{- \frac12}}.
    \]
    Now we continue repeating the rest of the arguments for the upper bound in the proof of Lemma~\ref{lem:arc vertex prob stable}, but keep the error-term multiplicative. This yields for some $\tilde{C} > 0$ and any $r \in (0, 1 + \varepsilon)$ that
    \[
    \frac{\sqrt{q_t q_s} I(i, t) J(j, s)}{\lambda_i \varrho_j} \left( \sqrt{q_t q_s} + \tilde{C} \left( \frac{(\sqrt{q_t} + \sqrt{q_s}) \log(n)}{\sqrt{n}} + \frac{\log(n)^2}{n} +  n^{\frac{(\nu - 1)(1 + \varepsilon - r)}{2 + \varepsilon}} +  \frac{\log(n)}{\sqrt{n}} \right)  \right).
    \]
    Due to Assumption~\ref{ass:CCI} we can bound $1/(\lambda_i \varrho_j)$ by a constant. Hence, if we take $\nu$ and $r$ close enough to zero, then there exists a constant $\widehat{C} > 0$ for which we can bound \eqref{eq:goal arc vertex prob no stable} from above by
    \begin{equation}\label{eq:step I proof arc prob no stable done}
            \widehat{C}' \sqrt{q_t q_s} \left(\sqrt{q_t q_s}  + \frac{\log(n)}{\sqrt{n}}   \right).
    \end{equation}

    \paragraph{Step II.}  We use the law of total probability to write
    \begin{equation}\label{eq:goal step III arc prob no stable}
            \prob(\mathcal{A}^{(a)}_{ts}) = \sum_{i = 1}^\infty \sum_{j = 1}^\infty p_{ij} \prob(\mathcal{A}^{(a)}_{ts} \;|\; C_a = (i, j)).
    \end{equation}
    Now, we condition on the intersection of $\mathcal{V}_{kk'}$ events and $\mathcal{V}_{ts}$ to bound
    \begin{align*}
    \prob(\mathcal{A}^{(a)}_{ts} \;|\; C_a = (i, j)) &\leq  \prob\left(\mathcal{A}^{(a)}_{ts} \;\Bigg{|}\; C_a = (i, j), \bigcap_{k = 1}^{u_n^\uparrow} \bigcap_{k' = 1}^{u_n^\uparrow} \mathcal{V}_{kk'}, \mathcal{V}_{ts} \right)  + \sum_{k = 1}^{u_n^\uparrow} \sum_{k' = 1}^{u_n^\uparrow} \prob(\neg \mathcal{V}_{kk'}) + \prob(\neg \mathcal{V}_{ts}).
    \end{align*}
    Using the $(N_k)_{k \in \mathcal{S}}$-independent bound \eqref{eq:step I proof arc prob no stable done} together with Lemma~\ref{lem:well-concentrated types} shows there exists a constant $\widehat{C} > 0$ such that
    \begin{align*}
         \prob(\mathcal{A}^{(a)}_{ts} \;|\; C_a = (i, j)) &\leq \widehat{C} \sqrt{q_t q_s} \left(\sqrt{q_t q_s} +   \frac{\log(n)}{\sqrt{n}}   \right) + 2 ((u_n^\uparrow)^2 + 1)  \exp\left( - \log(n)^2/2 \right).
    \end{align*}
    Substituting this back into \eqref{eq:goal step III arc prob no stable} and computing the sum shows the desired result, since the second term in the upper-bound is super-polynomial (cf. Lemma~\ref{lem:no stable}).
\end{proof}

\begin{proof}[Proof of Lemma~\ref{lem:deviation A double hat}]
    Fix a constant $\alpha > 3/8$. We fix two auxiliary constants $\tau_1 \in (2/3, 2 \alpha)$ and $\tau_2 \in (1/2 - \alpha, 1/2)$ and set $\zeta_n := u_n^\uparrow(\tau_1)$ and $\xi_n := u_n^\uparrow(\tau_2)$. Using these two constants, we will apply the union bound and split up the resulting bound on the target probability as follows: 
    \begin{align*}
       & \sum_{t = 1}^{\zeta_n}\sum_{s = 1}^{\zeta_n} \prob \left( \left|\bbar{A}_n(t, s) - \lfloor \kappa(t, s) q_t q_s n \rfloor \right| > C n^{1/2 + \alpha} \sqrt{q_t q_s} \right)\\
       &+ \sum_{t = \zeta_n}^{\xi_n}\sum_{s = 1}^{\xi_n} \prob \left( \left|\bbar{A}_n(t, s) - \lfloor \kappa(t, s) q_t q_s n \rfloor \right| > C n^{1/2 + \alpha} \sqrt{q_t q_s} \right) \\
       &+ \sum_{t = 1}^{\xi_n}\sum_{s = \zeta_n}^{\xi_n} \prob \left( \left|\bbar{A}_n(t, s) - \lfloor \kappa(t, s) q_t q_s n \rfloor \right| > C n^{1/2 + \alpha} \sqrt{q_t q_s} \right).
    \end{align*}
    Our goal is now to show that each of these sums converge to zero in order to prove the claim. This is what we will do in the rest of the proof.

    \paragraph{First double sum.} In essence, this computation will consist of an application of the Chernoff bound together with Lemma~\ref{lem:arc vertex prob stable}. We will first stochastically bound $\bbar{A}_n(t, s)$ in terms of binomial distributions. To do this, we write the probability inside this sum as
    \begin{equation}\label{eq:first double sum goal}
    \prob \left(\bbar{A}_n(t, s)   > C n^{1/2 + \alpha} \sqrt{q_t q_s} + \lfloor \kappa(t, s) q_t q_s n \rfloor \right) + \prob \left(\bbar{A}_n(t, s) < \lfloor \kappa(t, s) q_t q_s n \rfloor  -  C n^{1/2 + \alpha} \sqrt{q_t q_s}\right).
    \end{equation}
    Denote by $\mathcal{A}_{ts}^{(a)}$ the event that arc $a \in [\mu n]$ gets placed from a vertex with type $t$ to a vertex with type $s$. We note conditional on $(N_k)_{k \in \mathcal{S}}$ that 
    \[
    \bbar{A}_n(t, s) \sim \texttt{Bin}\left(\lfloor \mu n \rfloor, \prob(\mathcal{A}_{ts}^{(a)} ) \right).
    \]
    Thus, using Lemma~\ref{lem:arc vertex prob stable} we can stochastically bound this from above and below for some $\widehat{C} > 0$ by
    \[
   \underbrace{\texttt{Bin}\left(\lfloor \mu n \rfloor, \frac{q_t q_s \kappa(t, s)}{\mu} - \widehat{C}  \log(n) n^{- \frac{\tau}2}  \right)}_{B_n^-(t, s)} \preceq  \bbar{A}_n(t, s) \preceq \underbrace{\texttt{Bin}\left(\lfloor \mu n \rfloor, \frac{q_t q_s \kappa(t, s)}{\mu} + \widehat{C}  \log(n) n^{- \frac{\tau}2}  \right)}_{B_n^+(t, s)} ,
    \]
   where we use $B_n^-(t,s)$ and $B_n^+(t,s)$ to denote the random variables on, respectively, the left and right hand side. Using this in \eqref{eq:first double sum goal} allows us to bound the terms in the first sum as
    \begin{equation}\label{eq:stochastic bounded probability}
    \prob \left(B_n^+(t, s)  > C n^{1/2 + \alpha} \sqrt{q_t q_s} + \lfloor \kappa(t, s) q_t q_s n \rfloor \right) + \prob \left(B_n^-(t, s) < \lfloor \kappa(t, s) q_t q_s n \rfloor  -  C n^{1/2 + \alpha} \sqrt{q_t q_s}\right).
    \end{equation}
    We now apply the Chernoff bound on both these probabilities. We will only work out the first of the two, since the argument for the second is analogous. Like for all the previous applications of the Chernoff bound, we first sequentially bound $C n^{1/2 + \alpha} \sqrt{q_t q_s} + \lfloor \kappa(t, s) q_t q_s n \rfloor - \expec[B^+_n(t, s)]$ from below to show that it is positive. First, a direct calculation of this expression and using $\lfloor \mu n \rfloor \leq \mu n$ shows that it is bounded by
    \[
    C n^{\frac12 + \alpha} \sqrt{q_t q_s} + \lfloor \kappa(t, s) q_t q_s n \rfloor - \kappa(t, s) q_t q_s n - \mu  \widehat{C}  \log(n) n^{1 - \frac{\tau_1}2} \geq C n^{\frac{1}{2} + \alpha} \sqrt{q_t q_s} - 1 -  \mu  \widehat{C}  \log(n) n^{1 - \frac{\tau_1}2} .
    \]
    Using the fact that $t, s \leq \zeta_n$ allows us to further lower-bound this expression by
    \[
    C n^{-\frac{1}{2} + \alpha + \tau_1} - 1 -  \mu  \widehat{C}  \log(n) n^{1 - \frac{\tau_1}2}.
    \]
    Note that by our condition on $\alpha$ and $\tau_1$ it holds that $3 \tau_1 + 2\alpha > 3$, which implies that $\tau_1 + \alpha - 1/2 > 1-\tau_1/2$. Therefore, the first term in the expression above dominates. Thus, indeed there exists a $\widetilde{C} > 0$ such that for large enough $n$
\[
C n^{1/2 + \alpha} \sqrt{q_t q_s} + \lfloor \kappa(t, s) q_t q_s n \rfloor - \expec[B^+_n(t, s)] \geq \widetilde{C} n^{\alpha + \tau_1 - \frac12} > 0.
\]
    We can now apply the Chernoff bound, which shows the following bound for some constant $C^- > 0$:
    \[
     \prob \left(B_n^+(t, s)  > C n^{1/2 + \alpha} \sqrt{q_t q_s} + \lfloor \kappa(t, s) q_t q_s n \rfloor \right) \leq \exp\left(- C^- n^{\alpha + \tau_1 - \frac12}  \right).
    \]
    We repeat the same arguments, and find a similar bound for the probability involving $B_n^-(t, s)$ in \eqref{eq:stochastic bounded probability}. Then, we substitute both bounds in \eqref{eq:first double sum goal}, revealing a uniform bound for all probabilities in the furst sum. Thus, we see that there exists a $C^\pm > 0$ such that the first sum is bounded by
    \[
    2 \zeta_n^2 \cdot \exp\left(- C^\pm n^{\alpha + \tau_1 - \frac12}  \right) \to 0,
    \]
    by Lemma~\ref{lem:no stable}.
    \paragraph{Second and third double sum.} We will only give the argument for the second sum, because the argument for the third is the same. Similar to~\eqref{eq:first double sum goal}, we start by noting that each term in the second sum is equal to
    \[
    \prob \left(\bbar{A}_n(t, s)   > C n^{1/2 + \alpha} \sqrt{q_t q_s} + \lfloor \kappa(t, s) q_t q_s n \rfloor \right) + \prob \left(\bbar{A}_n(t, s) < \lfloor \kappa(t, s) q_t q_s n \rfloor  -  C n^{1/2 + \alpha} \sqrt{q_t q_s}\right).
    \]
    We will proceed to show that the second probability is zero by showing that $\lfloor \kappa(t, s) q_t q_s n \rfloor  -  C n^{1/2 + \alpha} \sqrt{q_t q_s}$ is negative. To do this, we subsequently bound
    \begin{align*}
        \lfloor \kappa(t, s) q_t q_s n \rfloor  -  C n^{1/2 + \alpha} \sqrt{q_t q_s} &\leq n^{1/2 + \alpha} \sqrt{q_t q_s} \left( \kappa(t, s) n^{1/2 - \alpha} \sqrt{q_t q_s} - C \right),\\
        &\leq n^{1/2 + \alpha} \sqrt{q_t q_s} \left(\kappa^+ n^{\tau_1/2 - \alpha} - C \right) \leq 0.
    \end{align*}
    Here, in the second inequality we have used that either $t$ or $s$ is larger than $\zeta_n$ and that $\kappa$ is bounded (Lemma~\ref{lem:kappa bounded}). In the final inequality we used that $\tau_1 < 2 \alpha$. Thus, to bound the second (and third) sum, we only need to bound
    \begin{equation}\label{eq:second sum goal}
        \prob \left(\bbar{A}_n(t, s)   > C n^{1/2 + \alpha} \sqrt{q_t q_s} + \lfloor \kappa(t, s) q_t q_s n \rfloor \right).
    \end{equation}
    Like in the previous step, we again have
    \[
     \bbar{A}_n(t, s) \sim \texttt{Bin}\left( \lfloor \mu n \rfloor, \prob(\mathcal{A}_{ts}^{(a)} ) \right),
    \]
    which we can stochastically bound using Lemma~\ref{lem:arc vertex prob no stable} for some $\widehat{C} > 0$ and all fixed $r > 0$ as
    \[
    \bbar{A}_n(t, s) \preceq \underbrace{\texttt{Bin}\left(\lfloor \mu n \rfloor, \widehat{C} \sqrt{q_t q_s} \left(\sqrt{q_t q_s} + \frac{\log(n)}{\sqrt{n}}   \right) + \frac{\widehat{C}}{n^r}\right)}_{B_n^+(t, s)}, 
    \]
    where we now use $B_n^+(t,s)$ to denote the random variable on the right hand side. This show we can bound \eqref{eq:second sum goal} by
    \begin{equation*}
        \prob \left(B_n^+(t, s)   > C n^{1/2 + \alpha} \sqrt{q_t q_s} + \lfloor \kappa(t, s) q_t q_s n \rfloor \right).
    \end{equation*}
    As always, we seek to apply the Chernoff bound, so we will show that indeed $\theta_{ts} := C n^{1/2 + \alpha} \sqrt{q_t q_s} + \lfloor \kappa(t, s) q_t q_s n \rfloor - \expec[B_n^+(t, s)] \geq 0$. Substituting the expectation and bounding $\lfloor \mu n \rfloor \leq \mu n$ yields
    \[
    \theta_{ts} = C n^{1/2 + \alpha} \sqrt{q_t q_s} + \lfloor \kappa(t, s) q_t q_s n \rfloor - \mu \widehat{C} n q_t q_s  - \mu \widehat{C} \log(n) \sqrt{q_t q_s n} - \mu \widehat{C} n^{1- r} .
    \]
    Since both $t, s \leq \xi_n$ we have that the first term in this expression is larger that $C n^{-1/2 + \alpha + \tau_2}$. Hence, by choosing $r$ sufficiently large in the last term, we see that the first term dominates it. Moreover, we also see that the first term dominates the fourth. In principle, this means that the fourth and final term are insignificant. Therefore, we rewrite to obtain the following lower bound:
    \[
    \theta_{ts} \geq n^{1/2 + \alpha} \sqrt{q_t q_s} \left( C - \mu \widehat{C} n^{1/2 - \alpha} \sqrt{q_t q_s} \right)  - \mu \widehat{C} \log(n) \sqrt{q_t q_s n} - \mu \widehat{C} n^{1- r} .    \]
    Without loss of generality, we know in these sums that $q_t \leq n^{-1 + \tau_1}$ (otherwise, this would be true for $q_s$). Hence, we can further bound
    \[
    \theta_{ts} \geq n^{1/2 + \alpha} \sqrt{q_t q_s} \left( C - \mu \widehat{C} n^{\tau_1/2 - \alpha}  \right)  - \mu \widehat{C} \log(n) \sqrt{q_t q_s n} - \mu \widehat{C} n^{1- r} .
    \]
    Again, since $\tau_1 < 2 \alpha$ we know from all previous arguments there exists a $\widetilde{C} > 0$ such that
    \[
    \theta_{ts} \geq \widetilde{C} n^{1/2 + \alpha} \sqrt{q_s q_t} > 0.
    \]
    This means we can apply the Chernoff bound. It shows for some $C^+ > 0$ that
    \[
     \prob \left(B_n^+(t, s)   > C n^{1/2 + \alpha} \sqrt{q_t q_s} + \lfloor \kappa(t, s) q_t q_s n \rfloor \right) \leq \exp\left(- C^+ n^{1/2 + \alpha} \sqrt{q_t q_s} \right).
    \]
    Since $t, s \leq \xi_n$, meaning $q_t, q_s \geq n^{-1 + \tau_2}$, we can further bound this as
    \[
    \prob \left(B_n^+(t, s)   > C n^{1/2 + \alpha} \sqrt{q_t q_s} + \lfloor \kappa(t, s) q_t q_s n \rfloor \right) \leq \exp\left(- C^+ n^{ \alpha + \tau_2 - 1/2} \right).
    \]
    Since we have assumed that $\alpha > 1/2 - \tau_2$, we have that $ \alpha + \tau_2 - 1/2 > 0$, showing that \eqref{eq:second sum goal} converges exponentially to zero. Thus, for the second (and third) sum we find it is bounded by
    \[
    \xi_n^2 \cdot \exp\left(- C^+ n^{ \alpha + \tau_2 - 1/2} \right) \to 0,
    \]
    by Lemma~\ref{lem:no stable}. Since all the sums converge to zero, we also have that the target probability converges to zero.
\end{proof}

\paragraph{Acknowledgments.} The research of Mike van Santvoort is funded by the Institute for Complex Molecular Systems (ICMS) at Eindhoven University of Technology.

\bibliographystyle{plain} 
\bibliography{references.bib} 

\begin{thebibliography}{10}

\bibitem{Abbe2018CommunityDevelopments}
Emmanuel Abbe.
\newblock {Community Detection and Stochastic Block Models: Recent
  Developments}.
\newblock {\em Journal of Machine Learning Research}, 18:1--86, 2018.

\bibitem{Armingol2021DecipheringExpression}
Erick Armingol, Adam Officer, Olivier Harismendy, and Nathan~E. Lewis.
\newblock {Deciphering cell–cell interactions and communication from gene
  expression}.
\newblock {\em Nature Reviews Genetics}, 22(2):71--88, 2 2021.

\bibitem{Ball2018EvaluationStructure}
Frank Ball and David Sirl.
\newblock {Evaluation of vaccination strategies for SIR epidemics on random
  networks incorporating household structure}.
\newblock {\em Journal of Mathematical Biology}, 76(1-2):483--530, 1 2018.

\bibitem{Bollobas2007TheGraphs}
Béla Bollob{\'{a}}s, Svante Janson, and Oliver Riordan.
\newblock {The phase transition in inhomogeneous random graphs}.
\newblock {\em Random Structures and Algorithms}, 31(1):3--122, 8 2007.

\bibitem{Cao2019OnDigraphs}
Junyu Cao and Mariana Olvera-Cravioto.
\newblock {On a general class of inhomogeneous random digraphs}.
\newblock {\em Random Structures {\&} Algorithms}, 56(3):722--774, 5 2019.

\bibitem{Choi2015TranscriptomeModel}
Hyejin Choi, Jianting Sheng, Dingcheng Gao, Fuhai Li, Anna Durrans, Seongho
  Ryu, Sharrell~B. Lee, Navneet Narula, Shahin Rafii, Olivier Elemento,
  Nasser~K. Altorki, Stephen~T.C. Wong, and Vivek Mittal.
\newblock {Transcriptome Analysis of Individual Stromal Cell Populations
  Identifies Stroma-Tumor Crosstalk in Mouse Lung Cancer Model}.
\newblock {\em Cell Reports}, 10(7):1187--1201, 2 2015.

\bibitem{Chung2006TheDegrees}
Fan Chung and Linyuan Lu.
\newblock {The volume of the giant component of a random graph with given
  expected degrees}.
\newblock {\em SIAM Journal on Discrete Mathematics}, 20(2):395--411, 2006.

\bibitem{Dimitrov2022ComparisonData}
Daniel Dimitrov, Dénes T{\"{u}}rei, Martin Garrido-Rodriguez, Paul~L. Burmedi,
  James~S. Nagai, Charlotte Boys, Ricardo~O. Ramirez~Flores, Hyojin Kim, Bence
  Szalai, Ivan~G. Costa, Alberto Valdeolivas, Aurélien Dugourd, and Julio
  Saez-Rodriguez.
\newblock {Comparison of methods and resources for cell-cell communication
  inference from single-cell RNA-Seq data}.
\newblock {\em Nature Communications}, 13(1), 12 2022.

\bibitem{Drobyshevskiy2019RandomConcepts}
Mikhail Drobyshevskiy and Denis Turdakov.
\newblock {Random graph modeling: A survey of the concepts}.
\newblock {\em ACM Computing Surveys}, 52(6), 12 2019.

\bibitem{Erdos1959OnI.}
P.~Erd{\H{o}}s and A.~R{\'{e}}nyi.
\newblock {On random graphs I.}
\newblock {\em Publicationes Mathematicae Debrecen}, 6:290--297, 1959.

\bibitem{Fortunato2016CommunityGuide}
Santo Fortunato and Darko Hric.
\newblock {Community detection in networks: A user guide}.
\newblock {\em Physics Reports}, 659:1--44, 11 2016.

\bibitem{Gilbert1959RandomGraphs}
E.N. Gilbert.
\newblock {Random graphs}.
\newblock {\em The Annals of Mathematical Statistics 1}, 30:1141--1144, 1959.

\bibitem{Guimera2009MissingNetworks}
Roger Guimer{\`{a}} and Marta Sales-Pardo.
\newblock {Missing and spurious interactions and the reconstruction of complex
  networks}.
\newblock {\em Proceedings of the National Academy of Sciences of the United
  States of America}, 106(52):22073--22078, 12 2009.

\bibitem{Hagberg2015FastGraphs}
Aric Hagberg and Nathan Lemons.
\newblock {Fast generation of sparse random kernel graphs}.
\newblock {\em PLoS ONE}, 10(9), 9 2015.

\bibitem{Hofstad2016RandomNetworks}
Remco van~der Hofstad.
\newblock {\em {Random Graphs and Complex Networks}}, volume~1.
\newblock Cambridge University Press, Cambridge, 2016.

\bibitem{Janson2000RandomGraphs}
Svante Janson, Tomasz Luczak, and Andrzej Rucinnski.
\newblock {\em {Random graphs}}.
\newblock John Wiley {\&} Sons, New York, 2000.

\bibitem{Lee2019AClustering}
Clement Lee and Darren~J. Wilkinson.
\newblock {A review of stochastic block models and extensions for graph
  clustering}.
\newblock {\em Applied Network Science}, 4(1), 12 2019.

\bibitem{Miller2011EfficientDegrees}
Joel~C. Miller and Aric Hagberg.
\newblock {Efficient Generation of Networks with Given Expected Degrees}.
\newblock In Alan Frieze, Paul Horn, and Pawel Pralat, editors, {\em Algorithms
  and Models for the Web Graph}, pages 115--126, Atlanta, 2011. Springer.

\bibitem{Peixoto2019BayesianBlockmodeling}
Tiago~P. Peixoto.
\newblock {Bayesian stochastic blockmodeling}.
\newblock In Patrick Doreian, Vladimir Batagelj, and Anuska Ferligoj, editors,
  {\em Advances in Network Clustering and Blockmodeling}, chapter~11, pages
  289--332. John Wiley {\&} Sons, first edition, 12 2019.

\bibitem{Stanley2019StochasticAttributes}
Natalie Stanley, Thomas Bonacci, Roland Kwitt, Marc Niethammer, and Peter~J.
  Mucha.
\newblock {Stochastic block models with multiple continuous attributes}.
\newblock {\em Applied Network Science}, 4(1), 12 2019.

\bibitem{Tarjan1972Depth-FirstAlgorithms}
Robert Tarjan.
\newblock {Depth-First Search and Linear Graph Algorithms}.
\newblock {\em SIAM Journal on Computing}, 1(2):146--160, 1972.

\bibitem{vanderHoorn2018TypicalModel}
Pim van~der Hoorn and Mariana Olvera-Cravioto.
\newblock {Typical distances in the directed configuration model}.
\newblock {\em The Annals of Applied Probability}, 28(3):1739--1792, 11 2018.

\bibitem{vanSantvoort2023MathematicallyMicroenvironment}
Mike van Santvoort, Óscar Lapuente-Santana, Francesca Finotello, Pim van~der
  Hoorn, and Federica Eduati.
\newblock {Mathematically mapping the network of cells in the tumor
  microenvironment}.
\newblock {\em Pre-print}, 2023.

\bibitem{Vitter1987AnSampling}
Jeffrey~Scott Vitter.
\newblock {An Efficient Algorithm for Sequential Random Sampling}.
\newblock {\em ACM Transactions on Mathematical Software}, 13(1):58--67, 1987.

\bibitem{Wang2019CellTranscriptomics}
Shuxiong Wang, Matthew Karikomi, Adam~L. Maclean, and Qing Nie.
\newblock {Cell lineage and communication network inference via optimization
  for single-cell transcriptomics}.
\newblock {\em Nucleic Acids Research}, 47(11), 6 2019.

\bibitem{Zaitsev2022PreciseTranscriptomes}
Aleksandr Zaitsev, Maksim Chelushkin, Daniiar Dyikanov, Ilya Cheremushkin,
  Boris Shpak, Krystle Nomie, Vladimir Zyrin, Ekaterina Nuzhdina, Yaroslav
  Lozinsky, Anastasia Zotova, Sandrine Degryse, Nikita Kotlov, Artur
  Baisangurov, Vladimir Shatsky, Daria Afenteva, Alexander Kuznetsov,
  Susan~Raju Paul, Diane~L. Davies, Patrick~M. Reeves, Michael Lanuti,
  Michael~F. Goldberg, Cagdas Tazearslan, Madison Chasse, Iris Wang, Mary
  Abdou, Sharon~M. Aslanian, Samuel Andrewes, James~J. Hsieh, Akshaya
  Ramachandran, Yang Lyu, Ilia Galkin, Viktor Svekolkin, Leandro Cerchietti,
  Mark~C. Poznansky, Ravshan Ataullakhanov, Nathan Fowler, and Alexander
  Bagaev.
\newblock {Precise reconstruction of the TME using bulk RNA-seq and a machine
  learning algorithm trained on artificial transcriptomes}.
\newblock {\em Cancer Cell}, 40(8):879--894, 8 2022.

\end{thebibliography}

\end{document}